\numberwithin{equation}{section}
\newtheorem{theoremcounter}{theoremcounter}[section]
\theoremstyle{plain}
\newtheorem{corollary}[theoremcounter]{Corollary}
\newtheorem{lemma}[theoremcounter]{Lemma}
\newtheorem{proposition}[theoremcounter]{Proposition}
\newtheorem{theorem}[theoremcounter]{Theorem}
\newtheorem{introtheorem}{Theorem}
\theoremstyle{definition}
\newtheorem{definition}[theoremcounter]{Definition}
\theoremstyle{remark}
\newtheorem{notation}[theoremcounter]{Notation}
\theoremstyle{empty}
\newtheorem{proofseparatehull}{Proof}
\newenvironment{proofseparate}[1]{\begin{proofseparatehull} #1}{\Endofproof \end{proofseparatehull}}
\newcommand{\xqed}[1]{%
    \leavevmode\unskip\penalty9999 \hbox{}\nobreak\hfill
    \quad\hbox{\ensuremath{#1}}}
\newcommand{\Endofproof}{\xqed{\proofSymbol}}
\newcommandx{\unsure}[2][1=]{\todo[linecolor=red,backgroundcolor=red!25,bordercolor=red,#1]{#2}}
\newcommandx{\change}[2][1=]{\todo[linecolor=blue,backgroundcolor=blue!25,bordercolor=blue,#1]{#2}}
\newcommandx{\info}[2][1=]{\todo[linecolor=OliveGreen,backgroundcolor=OliveGreen!25,bordercolor=OliveGreen,#1]{#2}}
\newcommandx{\improvement}[2][1=]{\todo[linecolor=Plum,backgroundcolor=Plum!25,bordercolor=Plum,#1]{#2}}
\newcommand{\cU}{\ensuremath{\mathcal{U}}}
\newcommand{\cZ}{\ensuremath{\mathcal{Z}}}
\newcommand{\bT}{\ensuremath{\mathbb{T}}}
\newcommand{\rE}{\ensuremath{\mathrm{E}}}
\newcommand{\rH}{\ensuremath{\mathrm{H}}}
\newcommand{\rK}{\ensuremath{\mathrm{K}}}
\newcommand{\rL}{\ensuremath{\mathrm{L}}}
\newcommand{\rS}{\ensuremath{\mathrm{S}}}
\newcommand{\rZ}{\ensuremath{\mathrm{Z}}}
\newcommand{\veps}{\ensuremath{\varepsilon}}
\newcommand{\vphi}{\ensuremath{\varphi}}
\newcommand{\ol}{\overline}
\newcommand{\eqstop}{\ensuremath{\, \text{.}}}
\newcommand{\eqcomma}{\ensuremath{\, \text{,}}}
\newcommand{\NN}{\ensuremath{\mathbb{N}}}
\newcommand{\ZZ}{\ensuremath{\mathbb{Z}}}
\newcommand{\QQ}{\ensuremath{\mathbb{Q}}}
\newcommand{\RR}{\ensuremath{\mathbb{R}}}
\newcommand{\CC}{\ensuremath{\mathbb{C}}}
\newcommand{\Hom}{\ensuremath{\mathop{\mathrm{Hom}}}}
\newcommand{\id}{\ensuremath{\mathrm{id}}}
\newcommand{\ra}{\ensuremath{\rightarrow}}
\newcommand{\hra}{\ensuremath{\hookrightarrow}}
\newcommand{\thra}{\ensuremath{\twoheadrightarrow}}
\newcommand{\ev}{\ensuremath{\mathrm{ev}}}
\newcommand{\Aut}{\ensuremath{\mathrm{Aut}}}
\newcommand{\ot}{\ensuremath{\otimes}}
\newcommand{\Cstar}{\ensuremath{\mathrm{C}^*}}
\newcommand{\Wstar}{\ensuremath{\mathrm{W}^*}}
\newcommand{\bo}{\ensuremath{\mathcal{B}}}
\newcommand{\im}{\ensuremath{\mathop{\mathrm{im}}}}
\newcommand{\Cstarred}{\ensuremath{\Cstar_\mathrm{red}}}
\newcommand{\cont}{\ensuremath{\mathrm{C}}}
\newcommand{\conto}{\ensuremath{\mathrm{C}_0}}
\newcommand{\Linfty}{\ensuremath{{\offinterlineskip \mathrm{L} \hskip -0.3ex ^\infty}}}
\newcommand{\ltwo}{\ensuremath{\ell^2}}
\newcommand{\Out}{\ensuremath{\mathrm{Out}}}
\newcommand{\grpaction}[1]{\ensuremath{\stackrel{#1}{\curvearrowright}}}
\newcommand{\Prim}{\operatorname{Prim}}
\newcommand{\Glimm}{\operatorname{Glimm}}
\newcommand{\spec}{\operatorname{spec}}
\newcommand{\Tone}{\ensuremath{\mathrm{T}_1}}
\newcommand{\rank}{\operatorname{rank}}
\newcommand{\authors}{Caleb Eckhardt and Sven Raum}
\renewcommand{\title}{C*-superrigidity of 2-step nilpotent groups}
\newcommand{\shorttitle}{C*-superrigidity of 2-step nilpotent groups}
\begin{document}


\thispagestyle{empty}

\begin{center}
  \begin{minipage}[c]{0.9\linewidth}
    \textbf{\LARGE \title} \\[0.5em]
    by \authors
  \end{minipage}
\end{center}
  
\vspace{1em}

\renewcommand{\thefootnote}{}
\footnotetext{
  last modified on \today
}
\footnotetext{
  MSC classification:
  46L35; 46L05, 20C07, 20F18 
}
\footnotetext{
  Keywords:
  \Cstar-superrigidity,
  nilpotent group,
  noncommutative torus,
  twisted group \Cstar-algebra
}

\begin{center}
  \begin{minipage}{0.8\linewidth}
    \textbf{Abstract}.
    We show that torsion-free finitely generated nilpotent groups are characterised by their group C*-algebras and we additionally recover their nilpotency class as well as the subquotients of the upper central series.  We then use a C*-bundle decomposition and apply K-theoretic methods based on noncommutative tori to prove that every torsion-free finitely generated 2-step nilpotent group can be recovered from its group C*-algebra.
  \end{minipage}
\end{center}


\section{Introduction}
\label{sec:introduction}

It is a classical problem to recover a discrete group $G$ from its various group algebras such as the integral group ring $\ZZ G$ and the complex group ring $\CC G$.  Notably the Kadison-Kaplansky unit conjecture predicts that for a torsion-free group $G$ every unit of $\CC G$ is of the form $c u_g$, for $c \in \CC^\times$ and $g \in G$, making it possible to recover $G$ canonically from $\CC G$.  In the operator algebraic setting, studying completions of the *-algebra $\CC G$ in different topologies of the bounded operators $\bo(\ltwo G)$, such as the reduced group \Cstar-algebra $\Cstarred(G)$ or the group von Neumann algebra $\rL(G)$, similar questions have attracted strong interest in the past years.

In the 80's, Connes conjectured in \cite[page 45]{connes82} that in analogy with Mostow and Margulis superrigidity every discrete group with infinite conjugacy classes and Kazhdan's property (T) can be recovered from its group von Neumann algebra; that is if $G$ is such a group and $H$ is any other group with $\rL(G)\cong \rL(H)$ then $G\cong H$ .  Derived from an old name for von Neumann algebras, this phenomenon is termed \mbox{\Wstar-superrigidity}.  In contrast to Connes' early vision of how the subject might develop, only in \cite{ioanapopavaes10} the first breakthrough on \mbox{\Wstar-superrigidity} was achieved, showing that certain iterated wreath products are \mbox{\Wstar-superrigid}.  In the introduction to their article, Ioana, Popa and Vaes draw attention to the even more mysterious situation in \mbox{\Cstar-superrigidity} and in analogy with Kadison-Kaplansky's unit conjecture point out that: ``It seems not even known whether $\Cstarred(G)$ always remembers a torsion-free group $G$''-- in other words, if $G$ is torsion-free group and  $\Cstarred(G)\cong \Cstarred(H)$ for some other group $H$, does it follow that $G \cong H$?  At the time of writing their article, only torsion-free abelian groups $G$ were known to be \mbox{\Cstar-superrigid} by a classical result of Scheinberg \cite[Theorem 1]{scheinberg74}.  More precisely, $G$ is isomorphic to the unitary group of $\Cstarred(G)$ modulo the connected component of the identity \cite[Theorem 8.58]{hofmannmorris98}.  Note that the restriction to torsion-free groups in Ioana-Popa-Vaes' remark is necessary and natural, since for example $\Cstarred(\ZZ_2 \times \ZZ_2)\cong \Cstarred(\ZZ_4)$ and the group \Cstar-algebra of every infinite abelian torsion group is isomorphic to the continuous functions on the standard Cantor set.

Motivated by \Cstar-superrigidity of torsion-free abelian groups, certain torsion-free virtually abelian groups were shown to be \Cstar-superrigid in \cite{kundbyraumthielwhite17}, providing the first examples of non-abelian torsion-free \Cstar-superrigid groups.  Further \Cstar-superrigidity results were deduced in \cite{chifanioana17} by combining the authors' result on  \Wstar-superrigidity of amalgamated free products with the unique trace property \cite{breuillardkalantarkennedyozawa14} of their reduced group \Cstar-algebra.  In the present article we prove the following result, providing the first natural class of \Cstar-superrigid non-abelian torsion-free groups.
\begin{introtheorem}
  \label{introthm:superrigid}
  Every torsion-free finitely generated 2-step nilpotent group is \Cstar-superrigid.
\end{introtheorem}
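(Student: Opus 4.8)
The plan is to take an arbitrary group $H$ with $\Cstarred(H) \cong \Cstarred(G)$ and recover the isomorphism type of $G$ from the internal structure of $\Cstarred(G)$. First I would invoke the characterisation of torsion-free finitely generated nilpotent groups by their group C*-algebras, together with the recovery of the nilpotency class and of the subquotients of the upper central series: this forces $H$ to be torsion-free finitely generated $2$-step nilpotent with $Z(H) \cong Z(G) \cong \ZZ^k$ and $H/Z(H) \cong G/Z(G) \cong \ZZ^n$. Now, for a central extension of $Q = \ZZ^n$ by $Z = \ZZ^k$ one has $H^2(\ZZ^n, \ZZ^k) \cong \Hom(\wedge^2 \ZZ^n, \ZZ^k)$ (the extension class coincides with its own antisymmetric part, since $H_1(\ZZ^n)$ is free), so the isomorphism type of such a group is determined by its commutator pairing $c \colon \wedge^2 Q \to Z$ up to the action of $\Aut(Q) \times \Aut(Z)$, with the requirement that $Z$ be the full centre translating into $c$ having trivial radical. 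It therefore suffices to recover the triple $(Q, Z, c)$ from $\Cstarred(G)$.

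Next I would set up the bundle decomposition. As nilpotent groups are amenable, $\Cstarred(G)$ is the full group C*-algebra, and since $Z := Z(G)$ is central, $\Cstarred(Z) \cong C(\widehat{Z}) \cong C(\bT^k)$ sits in its centre, exhibiting $\Cstarred(G)$ as a continuous field over $\widehat{Z}$ whose fibre over $\chi$ is the twisted group algebra $\Cstar(Q, \chi \circ \sigma)$, a noncommutative $n$-torus with commutation bicharacter $\beta_\chi(x,y) = \chi(c(x,y))$. Because $Z$ is exactly the centre, $c$ has trivial radical, so for $\chi$ outside a countable union of proper subtori the bicharacter is non-degenerate and the fibre $A_\chi$ is simple with trivial centre and a unique trace. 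A central element of $\Cstarred(G)$ restricts to a central element in each fibre, hence is scalar on the dense simple locus, and by continuity of the field this gives $Z(\Cstarred(G)) = \Cstarred(Z) = C(\widehat{Z})$. Consequently any isomorphism $\Phi \colon \Cstarred(H) \to \Cstarred(G)$ restricts to an isomorphism $C(\widehat{Z(H)}) \to C(\widehat{Z(G)})$, inducing a homeomorphism $f \colon \widehat{Z(G)} \to \widehat{Z(H)}$ over which $\Phi$ becomes fibrewise.

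The heart of the argument is then K-theoretic. For each $\chi$ in the dense simple locus the fibre carries a unique trace $\tau_\chi$, whose range $R_G(\chi) := \tau_{\chi *}\bigl(K_0(A_\chi)\bigr) \subseteq \RR$ is a canonical subgroup containing $\ZZ$, generated to first order by $1$ and the angles $\theta^G_{ij}(\chi) = \chi\bigl(c(e_i,e_j)\bigr) \bmod \ZZ$. Since $\Phi$ transports the unique trace fibrewise and induces isomorphisms on $K_0$, one gets $R_H(\psi) = R_G(f(\psi))$. Reading $\chi \mapsto \theta^G_{ij}(\chi)$ as a character of $\widehat{Z}$, that is, as the element $c(e_i,e_j) \in \widehat{\widehat{Z}} = Z$, the family of trace ranges recovers all pairing values $c(e_i,e_j)$ inside $Z$, up to sign and the unavoidable $\Aut$-ambiguity; matching $R_G \circ f = R_H$ then shows that $f$ dualises a group isomorphism $Z(G) \cong Z(H)$ intertwining $c_G$ with $c_H$. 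Combined with the classification of the first paragraph this yields $G \cong H$.

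I expect the main obstacle to be precisely this last extraction of the \emph{integral} pairing. The trace range is generated not only by the linear angles $\theta_{ij}$ but also by higher Pfaffian-type expressions in them, so isolating the primitive linear data, and proving that the a priori merely topological homeomorphism $f$ is forced to come from a group isomorphism of the centres that respects the $\ZZ$-bilinear pairings, will require tracking the variation of $R_G(\chi)$ across the whole base $\widehat{Z}$ and exploiting non-degeneracy of $c$ to upgrade real coincidences to integral ones.
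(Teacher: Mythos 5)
Your overall scaffolding coincides with the paper's: reduce via Theorem \ref{introthm:nilpotent} to the case where $H$ is torsion-free finitely generated $2$-step nilpotent with matching centre and central quotient, classify such groups by triples $(\cZ(G), G/\cZ(G), \omega_G)$ up to the $\Aut \times \Aut$ action (Corollary \ref{cor:extension-data-skew-symmetric}), and decompose $\Cstar(G)$ as a continuous bundle over $\hat Z$ with noncommutative tori as fibres (Theorem \ref{thm:packer-raeburn}). The gap sits exactly at the step you yourself flag as the main obstacle, and it is not a technicality but the heart of the matter. The invariant you propose, the trace range $R_G(\chi)=\tau_{\chi *}(\rK_0(A_\chi))\subset\RR$ of the simple fibres, cannot recover the pairing. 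First, for $\rank(G/\cZ(G))\geq 4$ this subgroup is generated not only by $1$ and the angles $\chi(c(e_i,e_j))$ but also by the higher Pfaffian products, and an unordered subgroup of $\RR$ admits no canonical splitting into its ``linear'' part. Second, even absent Pfaffians, $R_G(\chi)$ carries no labelling: it does not remember which generator came from which element of $\wedge^2(G/\cZ(G))$, so the pointwise coincidence $R_H = R_G\circ f$ cannot by itself produce a homomorphism $\wedge^2(H/\cZ(H))\ra\cZ(H)$ intertwined with $\omega_G$. Third, your plan needs a continuous-in-$\chi$ selection of generators of $R_G(\chi)$ and then an argument that the merely topological homeomorphism $f$ of the base tori is dual to a group isomorphism; you supply a mechanism for neither, and pointwise trace ranges behave too erratically (they collapse on the non-simple locus) for a selection argument to exist.

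The paper resolves precisely these difficulties by two moves absent from your proposal, visible in Theorem \ref{thm:recovering-triple}. First, instead of the trace \emph{range} on $\rK_0$, it pairs the trace against \emph{cup products of canonical $\rK_1$-classes}: the subgroup $B = \im\bigl(\cU(\Cstar G)\ra\rK_1(\Cstar(G)_\veps)\bigr)$ of a distinguished fibre is canonically isomorphic to $G/\cZ(G)$, and Elliott's evaluation theorem (Theorem \ref{thm:elliot-evaluation}, \cite[Theorem 3.1]{elliott84}) gives $\tau_\chi(b_1\cup b_2)=\hat\vphi_Z^{-1}(\chi)\bigl(\omega_G(g_1\wedge g_2)\bigr)$ exactly, with no Pfaffian contamination and with the labelling by pairs built in; this produces, for each $b_1\wedge b_2$, a canonical continuous function $f_{b_1\wedge b_2}:\Glimm(\Cstar G)\ra\RR/\ZZ$. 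Second, the upgrade from $\RR/\ZZ$-valued to integral data --- the issue you leave open --- is achieved by functoriality of the fundamental group: the winding numbers $\pi_1(f_{b_1\wedge b_2})\in\Hom\bigl(\pi_1(\Glimm(\Cstar G)),\ZZ\bigr)$, computed via Proposition \ref{prop:pairing-zn}, recover $\omega_G(g_1\wedge g_2)$ as an element of $Z\cong\pi_1(\hat Z)$. This also dissolves your final worry about $f$: one never needs the homeomorphism of Glimm spaces to be dual to a group isomorphism, because applying $\pi_1$ to it automatically yields an isomorphism of free abelian groups, and all the identifications become equivariant at the level of $\pi_1$. Without these two ingredients, or a genuine substitute for them, your concluding step does not go through.
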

We mention that in analogy with abelian groups, a torsion-free non-abelian nilpotent group $G$ is as far from being \Wstar-superrigid as possible.  Indeed,  it follows from Connes's \cite{connes76}  and the theory of direct integrals that $\rL(G)\cong \Linfty([0,1]) \otimes R$, where $R$ denotes the hyperfinite ${\rm II}_1$ factor. 

Our global strategy to prove \Cstar-superrigidity follows the same lines as \cite{kundbyraumthielwhite17}: we first characterise torsion-free finitely generated nilpotent groups in terms of their group \Cstar-algebras and recover their nilpotency class.  This is the second main result of this article.
\begin{introtheorem}
  \label{introthm:nilpotent}
  Let $G$ be a torsion-free finitely generated nilpotent group and $H$ some discrete group.  If $\Cstarred(H) \cong \Cstarred(G)$, then $H$ is a torsion-free finitely generated nilpotent group, too. Further the class of $H$ equals the class of $G$ and the subquotients of the upper central series of both groups agree.
\end{introtheorem}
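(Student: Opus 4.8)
The plan is to reconstruct, purely from the isomorphism class of $\Cstarred(G)$, the sequence of ranks $r_i = \rank(Z_i/Z_{i-1})$ of the successive quotients of the upper central series together with the fact that each quotient is free abelian; these data determine both the nilpotency class and the list of subquotients. I would argue by induction on the nilpotency class $c$ of $G$. The base case $c=1$ is the abelian case: here $\Cstarred(G)$ is commutative, hence so is $\Cstarred(H)$ and $H$ is abelian, with $H \cong \ZZ^{r_1}$ recovered from the spectrum being a connected torus of dimension $r_1$ (connectedness encoding torsion-freeness), which is Scheinberg's theorem. For the preliminary reductions in the inductive step I would observe that $G$ nilpotent is amenable, so $\Cstarred(G)$ is nuclear; since nuclearity of $\Cstarred(H)$ is equivalent to amenability of $H$, the isomorphism forces $H$ amenable and $\Cstarred(H) = \Cstarmax(H)$. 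Finite generation transports as well, because a group is finitely generated exactly when its reduced C*-algebra is finitely generated as a C*-algebra, the nontrivial direction using that $\Cstarred(K) \subsetneq \Cstarred(H)$ for a proper subgroup $K$ via the canonical conditional expectation.

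The heart of the argument is the recovery of the centre. For a torsion-free finitely generated nilpotent group the FC-centre coincides with the centre $Z(G) \cong \ZZ^{r_1}$, and this central subgroup gives $Z(\Cstarred(G)) = \Cstarred(Z(G)) \cong \cont(\widehat{Z(G)})$, a commutative C*-algebra whose spectrum is a torus of dimension $r_1$, with connectedness recording that $Z(G)$ is torsion-free. Transporting along the isomorphism, $Z(\Cstarred(H))$ is an isomorphic commutative C*-algebra, which pins down the FC-centre of $H$ and shows that its contribution to the centre is again a torus of dimension $r_1$.

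Next I would exploit the central extension $1 \to Z(G) \to G \to G/Z(G) \to 1$. This presents $\Cstarred(G)$ as a $\cont(\widehat{Z(G)})$-algebra, i.e.\ as the C*-bundle over $\widehat{Z(G)} \cong \bT^{r_1}$ whose fibres are the twisted group C*-algebras $\Cstarred(G/Z(G), \omega_\chi)$. The fibre over the trivial character $\chi=1$ is the untwisted $\Cstarred(G/Z(G))$; since $G/Z(G)$ is finitely generated nilpotent of class $c-1$ and torsion-free (the centre of a torsion-free nilpotent group is isolated), the inductive hypothesis applies to it and yields the ranks $r_2,\dots,r_c$ and the class $c-1$. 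Combining this with the rank $r_1$ from the centre, and using that an extension of a torsion-free nilpotent group of class $c-1$ by a central $\ZZ^{r_1}$ is torsion-free nilpotent of class at most $c$, I would conclude that $H$ is torsion-free finitely generated nilpotent with the same class and the same subquotients as $G$.

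The main obstacle is intrinsic and lies in this last step: to run the induction on the $H$-side I must canonically single out, inside the abstractly presented centre $\cont(\bT^{r_1})$, the fibre that is a genuine (untwisted) group C*-algebra, since the torus $\widehat{Z(G)}$ is homogeneous and carries no a priori distinguished point. I would attempt to characterise the trivial character C*-algebraically --- for instance as a character of the centre whose fibre admits a one-dimensional representation, or at which the fibre's own centre has maximal dimension --- but each such criterion only cuts out the subgroup $\{\chi : \omega_\chi \text{ is trivial in } H^2\}$ rather than the single point $\chi=1$, so isolating the correct fibre genuinely requires extra input. A closely related difficulty is to upgrade the torus-level information about $Z(\Cstarred(H))$ into the group statement that $Z(H) \cong \ZZ^{r_1}$ with $\Cstarred(H)$ fibring over $\widehat{Z(H)}$ as a bundle of twisted $\Cstarred(H/Z(H),\cdot)$, and I expect to resolve this together with the identification of the distinguished fibre.
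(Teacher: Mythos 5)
Your skeleton (induction on the nilpotency class, recovery of the centre, the bundle decomposition of $\Cstar(G)$ over $\widehat{\cZ(G)}$, induction applied to a fibre) is exactly the paper's strategy, but the step you single out as the ``main obstacle'' is not an obstacle at all, and your dismissal of the character criterion is precisely where the proposal fails. You do not need to locate the point $\chi=1$ inside the abstract Glimm torus of $\Cstar(G)$. The distinguished point lives on the $H$-side: once one knows $\cZ(\Cstarred(H))=\Cstar(\cZ(H))$, the trivial character of the \emph{group} $\cZ(H)$ is canonical, and the fibre of $\Cstarred(H)$ over it is the honest group algebra $\Cstar(H/\cZ(H))$, which admits a character. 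Transporting through the given isomorphism, this fibre is identified with \emph{some} fibre of $\Cstar(G)$, i.e.\ with $\Cstar(G/\cZ(G),\chi\circ\sigma)$ for an unknown $\chi$; but since it admits a character, the $\rS^1$-valued cocycle $\chi\circ\sigma$ is a coboundary (a one-dimensional projective representation trivialises the cocycle), so this fibre is isomorphic to the untwisted $\Cstar(G/\cZ(G))$. The induction only needs the isomorphism type of that fibre, not which point of the torus it sits over: the fact that the character criterion ``only cuts out a subgroup of $\chi$'s'' is harmless, because every fibre over that subgroup is isomorphic to the same untwisted algebra. This is exactly Proposition \ref{prop:one-dimensional-projective-representation} in the paper, which you name and then wrongly reject.

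Two further steps are missing or wrong. First, your finite-generation criterion is false: for $H=\QQ$ the algebra $\Cstarred(\QQ)\cong\cont(\hat\QQ)$, and $\hat\QQ$ is a compact metrizable one-dimensional space (a solenoid), hence embeds in $\RR^3$, so $\cont(\hat\QQ)$ is finitely generated as a unital \Cstar-algebra by Stone--Weierstrass, while $\QQ$ is not finitely generated. Your conditional-expectation argument only shows $\Cstarred(K)\neq\Cstarred(H)$ for proper subgroups $K$; a finite set of generators of the algebra need not generate a subalgebra of the form $\Cstarred(K)$. The paper instead obtains finite generation of $H$ structurally, from $\cZ(H)\cong\ZZ^n$ together with finite generation of $H/\cZ(H)$ supplied by the induction hypothesis. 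Second, nothing in your inductive step shows that $H$ is torsion-free, and this is needed before any FC-centre analysis can begin (Proposition \ref{prop:centre-general-group-cstar-algebra} requires it, since only for torsion-free groups is the FC-centre automatically abelian); the paper gets it from Theorem \ref{thm:chacterising-tf}: $\Cstar(G)$ has no non-trivial projections when $G$ is torsion-free nilpotent, while a torsion element of $H$ would produce one. Third, the step you explicitly postpone --- upgrading $\cZ(\Cstarred(H))\cong\cont(\bT^n)$ to $\cZ(H)\cong\ZZ^n$ with $\cZ(\Cstarred(H))=\Cstar(\cZ(H))$ --- is a genuine theorem in the paper (Theorem \ref{thm:quotient-implies-free}): if a group acts with finite orbits on a torsion-free abelian group $A$ and $\cont(\hat A)^G\cong\cont(\bT^n)$, then $A\cong\ZZ^n$ and the action is trivial; applied to the conjugation action of $H$ on its FC-centre, this closes the gap. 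On the positive side, your final assembly (a central extension of a torsion-free nilpotent group of class $c-1$ by $\ZZ^{r_1}$ is torsion-free nilpotent of class $c$, with matching subquotients) is correct and in fact more direct than the paper's own route, which passes through Moore--Rosenberg's $\Tone$ primitive-ideal-space theorem (Theorem \ref{thm:tone-iff-virt-nilpotent}) and a separate lemma on virtually nilpotent groups.
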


\noindent An important ingredient in the proof of this theorem describes the centre of the group \Cstar-algebra of an arbitrary torsion-free virtually nilpotent group, which is of independent interest.
Given a torsion-free virtually nilpotent group $G$ with some maximal normal finite index nilpotent subgroup $N \unlhd G$ we write $F = G/N$ for the finite quotient group.  Then the outer action of $F$ on $N$ induces an action of $F$ on the centre $\cZ(N)$, as is explained in Section \ref{sec:centre}.
\begin{introtheorem}
  \label{introthm:centre}
  If $G$ is a torsion-free virtually nilpotent group and $N \unlhd G$ is a maximal normal finite index nilpotent subgroup with quotient $G/N = F$, then $\cZ(\Cstar G) = \Cstar(\cZ(N))^F$.
\end{introtheorem}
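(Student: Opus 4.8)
The plan is to compute the centre via the Fourier expansion of $\Cstar G$ with respect to its canonical trace. Since $G$ is finitely generated virtually nilpotent, it is amenable, so $\Cstar G = \Cstarred G$ carries a faithful trace $\tau$ with $\tau(u_g) = 1$ for $g = e$ and $0$ otherwise. For $a \in \Cstar G$ put $\hat a(g) = \tau(a u_g^*)$; then $\sum_g |\hat a(g)|^2 = \tau(a^* a) < \infty$ and $\hat a$ determines $a$ by faithfulness. I will prove the two inclusions separately. The inclusion $\Cstar(\cZ(N))^F \subseteq \cZ(\Cstar G)$ is the soft one: an element $a \in \Cstar(\cZ(N))$ commutes with $u_n$ for all $n \in N$ because $\cZ(N)$ is central in $N$, and for general $g \in G$ one has $u_g a u_g^* = \alpha_{gN}(a)$, where $\alpha$ is the $F$-action on $\Cstar(\cZ(N))$ induced by conjugation (well defined since $\cZ(N)$ is characteristic in $N \unlhd G$ and $N$ acts trivially on $\cZ(N)$). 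Hence $F$-invariance of $a$ is exactly commutation with all $u_g$, so $a$ is central. Note this argument is reversible and will be reused below.

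For the reverse inclusion, let $a \in \cZ(\Cstar G)$. Centrality gives $u_h a u_h^* = a$, i.e.\ $\hat a(hgh^{-1}) = \hat a(g)$, so $\hat a$ is constant on conjugacy classes. If $g$ has an infinite conjugacy class, then $\hat a$ takes the value $\hat a(g)$ on infinitely many group elements, and $\ltwo$-summability of $\hat a$ forces $\hat a(g) = 0$. Thus $\hat a$ is supported on the FC-centre $\Delta(G) = \{\, g : g \text{ has finite conjugacy class}\,\}$.

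The \emph{main obstacle} is the purely group-theoretic identification $\Delta(G) = \cZ(N)$, which is where the hypotheses really enter. Because $G$ is finitely generated and virtually nilpotent it is virtually polycyclic, hence Noetherian, so every subgroup, in particular $\Delta(G)$, is finitely generated. A finitely generated FC-group is centre-by-finite (its centre is the finite intersection of the finite-index centralisers of a generating set), and then Schur's theorem forces its commutator subgroup to be finite; as $G$ is torsion-free this commutator subgroup is trivial, so $\Delta(G)$ is a finitely generated torsion-free abelian, in particular nilpotent, normal subgroup. By Fitting's theorem $N\Delta(G)$ is again a nilpotent normal subgroup of finite index, so maximality of $N$ yields $\Delta(G) \subseteq N$. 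Finally, in the torsion-free nilpotent group $N$ centralisers are isolated, i.e.\ $N/C_N(n)$ is torsion-free, so a finite-index centraliser must be all of $N$; hence the intrinsic FC-centre of $N$ equals $\cZ(N)$. Since an element of $N$ has finite $G$-conjugacy class if and only if it has finite $N$-conjugacy class (as $[G:N] < \infty$), we conclude $\Delta(G) = \Delta(G) \cap N = \cZ(N)$.

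It remains to upgrade ``$\hat a$ supported on $\cZ(N)$'' to ``$a \in \Cstar(\cZ(N))$''. The canonical conditional expectation $E \colon \Cstarred G \to \Cstar(\cZ(N))$ determined by $E(u_g) = u_g$ for $g \in \cZ(N)$ and $E(u_g) = 0$ otherwise preserves Fourier coefficients, so $a = E(a) \in \Cstar(\cZ(N))$. Now apply the reversible computation from the first paragraph: for $a \in \Cstar(\cZ(N))$, centrality in $\Cstar G$ is equivalent to $\alpha_f(a) = a$ for all $f \in F$, that is, to $a \in \Cstar(\cZ(N))^F$. Combining the two inclusions gives $\cZ(\Cstar G) = \Cstar(\cZ(N))^F$.
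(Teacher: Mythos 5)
Your proof has the same architecture as the paper's: a Fourier-coefficient argument showing that any central element is supported on the FC-centre $\Delta(G)$ (this is the paper's Proposition \ref{prop:centre-general-group-cstar-algebra}), a group-theoretic identification $\Delta(G)=\cZ(N)$ exploiting maximality of $N$ (the paper's Lemma \ref{lem:finite-conjugacy-classes}), and the observation that, for elements of $\Cstar(\cZ(N))$, centrality in $\Cstar G$ is exactly $F$-invariance. Your route to $\Delta(G)=\cZ(N)$ --- Fitting's theorem applied to $N\Delta(G)$ together with isolation of centralisers in torsion-free nilpotent groups --- is a legitimate variant of the paper's argument, which instead identifies $\Delta(G)$ with $\cZ_G(N)$ by an upper-central-series computation and then applies maximality to $\langle \cZ_G(N), N\rangle$.

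There is, however, a genuine gap: you assume throughout that $G$ is finitely generated, and the theorem does not. The assumption is not cosmetic; it enters critically in the step ``$G$ is virtually polycyclic, hence Noetherian, so $\Delta(G)$ is finitely generated'', on which your Schur-type argument for abelianness of $\Delta(G)$ rests. For a non-finitely-generated torsion-free virtually nilpotent group this step simply fails: already for $G=\QQ$ the subgroup $\Delta(G)=\QQ$ is not finitely generated, so no Noetherian property is available. This generality is genuinely needed in the paper: Theorem \ref{introthm:centre} is invoked inside the proof of Lemma \ref{lem:nilpotent-vs-virtually-nilpotent} for a group $H$ that at that point is only known to be torsion-free and virtually nilpotent, not finitely generated. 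Fortunately the repair is cheap, and it is exactly the paper's Proposition \ref{prop:torsion-free-fc}: abelianness is a local property, so it suffices to check it on finitely generated subgroups. Any two elements of $\Delta(G)$ generate a finitely generated subgroup, which is again a torsion-free FC-group (conjugacy classes inside a subgroup are contained in the $G$-conjugacy classes); your centre-by-finite-plus-Schur argument applies verbatim to that subgroup and shows it is abelian, hence $\Delta(G)$ itself is abelian with no finiteness hypothesis on $G$. The rest of your proof (Fitting's theorem, isolation of centralisers in $N$, and the comparison of $G$- and $N$-conjugacy classes via $[G:N]<\infty$) nowhere uses finite generation, so with this local-to-global substitution your argument does establish the theorem in its stated generality.
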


\noindent Let us now explain the strategy to recover a torsion-free finitely generated 2-step nilpotent group $G$ from its group \Cstar-algebra.  A first application of Theorem \ref{introthm:nilpotent} makes it clear that any group $H$ satisfying $\Cstarred(G) \cong \Cstarred(H)$ must be torsion-free finitely generated and nilpotent of class 2 and satisfy $\cZ(H) \cong \cZ(G)$.  Applying Theorem \ref{introthm:centre} to write $\Cstarred(H) \cong \Cstarred(G)$ as a \Cstar-bundle over their centre, we are able to recover an isomorphism of torsion-free abelian groups $H/\cZ(H) \cong G/\cZ(G)$.  Further, analysing how K-theory varies over the different fibres of this \Cstar-bundle, we recover the 2-cocycle describing the central extension $1 \ra \cZ(G) \ra G \ra G / \cZ(G) \ra 1$.  In fact, the fibres of $\Cstarred(G) \cong \Cstarred(H)$ are so called noncommutative tori, whose K-theory has been intensively studied by Elliott \cite{elliott84} and Rieffel \cite{rieffel88} and we make use of the former results, summarised in Section \ref{sec:elliott}.

\noindent It is natural to ask for possible extensions of the present result, either beyond the scope of finitely generated groups or beyond the class of 2-step nilpotent groups.  As for the first extension, we are facing the problem to find an appropriate replacement for Theorem \ref{thm:quotient-implies-free}, which plays an important role in Section \ref{sec:characterising}.  Note that both theorems in question make crucial use of finite rank of the abelian groups involved.  As for the second extension, while the general strategy to prove C*-superrigidity still applies to finitely generated, torsion-free nilpotent groups, important challenges have to be overcome.  Understanding of K-theory and the trace pairing of twisted group \Cstar-algebras of nilpotent groups as well as the replacement for cup products as used in the proof of Theorem \ref{thm:recovering-triple} are the two main such challenges.  Nevertheless, after this work has first appeared in form of a preprint some examples of \Cstar-superrigid nilpotent groups of arbitrary class have been provided \cite{omland18-superrigid}.

This article has 5 sections.  After the introduction, thorough preliminaries provide all necessary information to make this article readable by non-experts in operator algebras, group theory or cohomology theory.  In Section \ref{sec:centre} we calculate the centre of the group \Cstar-algebra of a virtually nilpotent group, proving Theorem \ref{introthm:centre}.  In Section \ref{sec:characterising} we provide the characterisation of torsion-free finitely generated nilpotent groups announced in Theorem \ref{introthm:nilpotent}.  We then proceed to prove our main Theorem \ref{introthm:superrigid} in Section \ref{sec:superrigidity}.

\subsection*{Acknowledgements}
The authors would like to thank Elizabeth Gillaspy for informing us about each others interest in the present article's subject.  Further, we thank R{\'e}mi Boutonnet for useful remarks on a previous version of this work and we thank Alain Valette for providing us with the idea of an elementary proof for torsion-free nilpotent groups of Theorem \ref{thm:chacterising-tf} as well as pointing out the references \cite{jipedersen90}, \cite{kaniuthtaylor89} and \cite{rosenberg83}.  We thank Yuhei Suzuki for pointing out an error in \cite[Proposition 3.3]{blanchardgogic15}, which we used in our work, and providing us with a shorter proof of Theorem \ref{thm:quotient-implies-free}.  Finally, we would like to thank the referee for his careful reading of the manuscript and pointing out several inaccuracies.  C.E. was partially supported by a grant from the Simons Foundation.

\section{Preliminaries and various results}
\label{sec:preliminaries}
All groups considered in this article are discrete unless explicitly stated differently.  A group $G$ is called nilpotent if its upper central series defined by
\begin{equation*}
  Z_0 = \{e\}
  \eqcomma \quad
  Z_1 = \cZ(G)
  \quad \text{ and } \quad
  Z_{i+1} = \{g \in G \mid \forall h \in G: [g,h] \in Z_i\}
\end{equation*}
terminates in some $Z_n = G$.  The minimal $n$ such that $Z_{n+1} = G$ is called the nilpotency class of $G$, and we say that $G$ is $n$-step nilpotent.

We refer the reader to the excellent exposition \cite{brownozawa08} on discrete groups and their operator algebras.  Since nilpotent groups are amenable, their maximal \Cstar-algebra $\Cstar(G)$ and their reduced \Cstar-algebra $\Cstarred(G)$ agree.  The latter is defined as the closure of the complex group ring $\CC G$ under the image of the left-regular representation $\lambda: G \ra \cU(\ltwo G): \lambda_g \delta_h = \delta_{gh}$.  For amenable groups, we write $\Cstar(G)$ for this group \Cstar-algebra.


Let us recall the notion of conditional expectations between \Cstar-algebras.
If $B \subset A$ is a unital \Cstar-subalgebra, then a conditional expectation from $A$ onto $B$ is a contractive projection $\rE: A \ra B$ of norm one.   By Tomiyama's theorem \cite[Theorem 1.5.10]{brownozawa08}, every conditional expectation $\rE: A \ra B$ is $B$-bi-modular, that is $\rE(b a b') = b \rE(a) b'$ for all $b, b' \in B$ and all $a \in A$.  If $H \leq G$ is an inclusion of groups, then there is a natural faithful conditional expectation $\rE: \Cstarred(G) \ra \Cstarred(H)$ uniquely defined by the property $\rE(u_g) = \mathbb{1}_H(g) u_g$.


\subsection{Actions on torsion-free abelian groups}
\label{sec:torsion-free-abelian}

In this section we prove Theorem \ref{thm:quotient-implies-free}, which might be known to experts.  In order to remain consistent with the notation of the remaining article, we write abelian groups multiplicatively.

Recall that an action of a discrete group $G$ on a topological space $X$ is called topologically free, if for every $g \in G \setminus \{e\}$ the fixed point set $\{x \in X \mid gx = x\}$ is meager in $X$.  The next lemma is folklore and a proof can be found for example in \cite[Lemma 2.1]{eckhardt15}.
\begin{lemma}
  \label{lem:finite-group-acting-on-zn}
  Let $H$ be a connected topological group and let $G$ be a group acting faithfully by continuous automorphisms on $G$.  Then $G \grpaction{} H$ is topologically free.
\end{lemma}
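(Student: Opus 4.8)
The plan is to use crucially that $G$ acts by \emph{automorphisms} and not merely by homeomorphisms, since this forces every fixed point set to be a subgroup. Fix $g \in G \setminus \{e\}$ and let $\alpha_g \colon H \ra H$ denote the corresponding continuous automorphism. I would first record that the fixed point set $\Fix(\alpha_g) = \{x \in H \mid \alpha_g(x) = x\}$ is a closed subgroup of $H$. It is a subgroup because $\alpha_g$ is a homomorphism: if $\alpha_g(x) = x$ and $\alpha_g(y) = y$, then $\alpha_g(xy^{-1}) = \alpha_g(x)\alpha_g(y)^{-1} = xy^{-1}$. It is closed because it is the equaliser of the two continuous maps $\alpha_g$ and $\id$, equivalently the preimage $\phi^{-1}(\{e\})$ of the continuous map $\phi(x) = \alpha_g(x)x^{-1}$.

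Next I would invoke faithfulness of the action. Since $g \neq e$ and $G$ acts faithfully, we have $\alpha_g \neq \id$, so that $\Fix(\alpha_g)$ is a \emph{proper} closed subgroup of $H$. The decisive observation is then the standard fact that a proper closed subgroup $K$ of a connected topological group $H$ is nowhere dense. Indeed, a subgroup with nonempty interior is open, since it is a union of translates of an open neighbourhood of one of its points, and an open subgroup is automatically closed because its complement is a union of open cosets; connectedness of $H$ would then force $K = H$, contradicting properness. Hence $K$ has empty interior, and being closed it is nowhere dense, in particular meager. Applying this to $K = \Fix(\alpha_g)$ for each $g \in G \setminus \{e\}$ yields precisely the definition of topological freeness.

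I do not anticipate a genuine obstacle in this argument: the only place where the automorphism hypothesis (as opposed to a continuous action merely by homeomorphisms) is really used is in upgrading each fixed point set to a subgroup, after which the conclusion is pure point-set topology of topological groups. The one point meriting a line of care is the closedness of $\{x \mid \alpha_g(x) = x\}$, which uses that $H$ is Hausdorff, as is implicit in the notion of topological group employed here.
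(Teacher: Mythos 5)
Your proof is correct: fixed point sets of automorphisms are closed subgroups, faithfulness makes them proper, and a proper closed subgroup of a connected topological group is nowhere dense, hence meager. The paper itself gives no proof of this lemma (it labels it folklore and cites \cite[Lemma 2.1]{eckhardt15}), and your argument is exactly that standard folklore argument, so nothing differs in substance.
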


\noindent The next theorem's proof makes use of the notion of rank.  If $A$ is an abelian group considered as a $\ZZ$-module, the rank of $A$ is defined as $\rank(A) = \dim_\QQ A \ot_\ZZ \QQ$.  Further, if $A$ is an abelian group, then the Pontryagin dual $\hat A = \Hom(A, \rS^1)$ equipped with the topology of pointwise convergence is a compact abelian group.  Every automorphism $\alpha$ of $A$ defines a continuous automorphism of $\hat A$ by precomposition $\chi \mapsto \chi \circ \alpha$.  Finally, if a group $G$ acts on a \Cstar-algebra $A$ by *-automorphisms, we denote by $A^G = \{x \in A \mid gx = x \textup{ for all }g\in G\}$ the fixed point algebra.  We thank Yuhei Suzuki for indicating a shorter proof for the next theorem than the one originally presented.   We start by a short lemma.
\begin{lemma}
  \label{lem:finite-group-action-trivial}
  Let $F \grpaction{} \bT^n$ be a finite group acting by continuous automorphisms such that the quotient space satisfies $\bT^n/F \cong \bT^n$.  Then $F$ acts trivially.
\end{lemma}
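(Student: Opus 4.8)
The plan is to compare the rational cohomology of $\bT^n$ with that of the quotient $\bT^n/F$ by means of the transfer isomorphism for finite group actions, and to read off triviality of the action already from first cohomology.

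First I would record how $F$ acts on cohomology. Every element of $F$ acts as a continuous group automorphism of $\bT^n = \RR^n/\ZZ^n$; such an automorphism lifts to a linear map of $\RR^n$ preserving the lattice $\ZZ^n$, so it is given by a matrix in $\GL_n(\ZZ)$ and in particular fixes the neutral element. Hence $F$ acts $\ZZ$-linearly on $\pi_1(\bT^n) \cong \ZZ^n$ and, dually, $\QQ$-linearly on $H^{1}(\bT^n;\QQ) \cong \QQ^n$. I would also note the elementary fact that if an element of $\GL_n(\ZZ)$ acts trivially on $H^{1}(\bT^n;\QQ)$, then it acts trivially on $\pi_1(\bT^n) \ot_\ZZ \QQ$, hence (as $\ZZ^n$ is torsion free) trivially on $\pi_1(\bT^n)$, and therefore is the identity automorphism of $\bT^n$. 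Thus it suffices to prove that $F$ acts trivially on $H^{1}(\bT^n;\QQ)$.

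Next I would invoke the rational transfer isomorphism: since $F$ is finite and $|F|$ is invertible in $\QQ$, the quotient map $\pi \colon \bT^n \to \bT^n/F$ induces an isomorphism $\pi^{*}\colon H^{*}(\bT^n/F;\QQ) \xrightarrow{\ \cong\ } H^{*}(\bT^n;\QQ)^{F}$ onto the $F$-invariant part of the cohomology (the action is by homeomorphisms of a compact manifold, which is more than enough regularity for the transfer argument). Combining this with the hypothesis $\bT^n/F \cong \bT^n$, which forces $\dim_\QQ H^{1}(\bT^n/F;\QQ) = \dim_\QQ H^{1}(\bT^n;\QQ) = n$, I obtain $\dim_\QQ\bigl(H^{1}(\bT^n;\QQ)^{F}\bigr) = n$. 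Since $H^{1}(\bT^n;\QQ)$ itself has dimension $n$, the invariant subspace must be everything, i.e. $F$ acts trivially on $H^{1}(\bT^n;\QQ)$; by the reduction of the previous paragraph, every element of $F$ then acts as the identity on $\bT^n$, and the action is trivial.

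The only nonformal input is the rational transfer isomorphism, and that is the one step I would be careful to state precisely; beyond it, everything is a one-line dimension count in degree one, so that neither the ring structure nor the higher cohomology of $\bT^n$ is needed.
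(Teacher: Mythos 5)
Your proof is correct, but it takes a genuinely different route from the paper's. The paper argues on fundamental groups: the quotient map $q: \bT^n \ra \bT^n/F$ by a finite group has the path lifting property, so $\pi_1(q)$ is surjective, hence an isomorphism because $\pi_1(\bT^n/F) \cong \ZZ^n$ is Hopfian; since $q \circ f = q$ for every $f \in F$, the map $\pi_1(q)$ factors through the coinvariants $\ZZ^n/\langle a - fa \mid a \in \ZZ^n,\, f \in F\rangle$, and as every proper quotient of $\ZZ^n$ has rank strictly less than $n$, the relation subgroup must vanish, i.e.\ $fa = a$ for all $a$ and $f$. You instead argue on rational cohomology: the transfer isomorphism $\rH^1(\bT^n/F;\QQ) \cong \rH^1(\bT^n;\QQ)^F$ together with the hypothesis forces the invariant subspace to have full dimension $n$, so $F$ acts trivially on $\rH^1(\bT^n;\QQ)$, hence on $\pi_1(\bT^n) \cong \ZZ^n$ (your reduction via torsion-freeness is fine), hence on $\bT^n$. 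The two arguments are essentially dual: the paper extracts triviality from surjectivity onto $\pi_1$ and a rank count on coinvariants, you from an isomorphism onto invariants in $\rH^1$ and a dimension count. What the paper's route buys is elementarity---only the path lifting property for finite quotients and the Hopfian property of $\ZZ^n$ are needed. What your route buys is brevity and robustness, at the cost of invoking the transfer; as you yourself flag, that is the one step needing care, and the cleanest justification is to work in \v{C}ech (Alexander--Spanier) cohomology, where the transfer isomorphism holds for any finite group acting on a paracompact space, and then to observe that \v{C}ech and singular cohomology agree on both $\bT^n$ and $\bT^n/F$ since, by hypothesis, both are manifolds. With that point made precise, your proof is complete.
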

\begin{proof}
  The quotient map $q: \bT^n \ra \bT^n/F$ by a finite group has the path lifting property according to \cite[11.1.4]{brown06-topology-groupoids}.  So the induced map on fundamental groups $\pi_1(q)$ is a surjection, hence an isomorphism because $\ZZ^n \cong \pi_1(\bT^n)$ is Hopfian.  Identifying $\ZZ^n \cong \pi_1(\bT^n)$ by the map that sends $a \in \ZZ^n$ to the image of the line segment $[0,a] \subset \RR^n$, we observe that $\pi_1(q)$ factors through the group $\ZZ^n/\langle a - f a \mid a \in \ZZ^n, f \in F \rangle$.  Since every proper quotient of $\ZZ^n$ has rank strictly less than $n$ and $\pi_1(q)$ is an isomorphism, it follows that $a = fa$ for all $a \in \ZZ^n$ and $f \in F$.  This finishes the proof of the theorem.  
\end{proof}

\begin{theorem}
  \label{thm:quotient-implies-free}
  Let $A$ be a torsion-free abelian group and $G$ a group acting on $A$ by automorphisms with finite orbits.  If $\cont(\hat A)^G \cong \cont(\bT^n)$ for some $n \in \NN$, then $A \cong \ZZ^n$ and $G$ acts trivially on $A$.
\end{theorem}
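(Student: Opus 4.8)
The plan is to pass to the Pontryagin dual, replace $G$ by a compact group, and then read off the rank of $A$ from the covering dimension of the quotient torus. First I would discard the kernel of the action and regard $G$ as a subgroup of $\Aut(A) = \Aut(\hat A)$, letting $\overline G$ denote its closure in the topology of pointwise convergence on the discrete group $A$. Because every orbit of $G$ on $A$ is finite, $\overline G$ is a closed subgroup of the compact group $\prod_{a \in A}(G \cdot a)$, hence profinite, and it has the same invariant functions as $G$: the induced action $\overline G \times \hat A \to \hat A$ is continuous and $\hat A$ is compact, so a continuous function on $\hat A$ is $G$-invariant if and only if it is $\overline G$-invariant. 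Consequently $\cont(\hat A)^G = \cont(\hat A)^{\overline G} = \cont(\hat A/\overline G)$, and Gelfand duality turns the hypothesis into a homeomorphism $\hat A/\overline G \cong \bT^n$. Note also that $\hat A$ is connected, since $A$ is torsion-free.

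The heart of the argument is to show that $\rank A = n$ and that $\overline G$ acts trivially. For the upper bound on the rank, recall that the covering dimension of the compact connected abelian group $\hat A$ equals $\rank A$, while the orbit map $q : \hat A \to \hat A/\overline G$ is a closed surjection whose fibres are the $\overline G$-orbits; these are profinite, hence zero-dimensional, so the Hurewicz dimension inequality gives $\rank A = \dim \hat A \le \dim(\hat A/\overline G) = \dim \bT^n = n$. For the lower bound and for triviality I would pass to rational \v Cech cohomology, using $\check H^1(\hat A;\QQ) \cong A \ot \QQ$. Writing $A$ as the direct limit of its finitely generated $\overline G$-invariant subgroups — each finite orbit generates one — reduces the transfer isomorphism $\check H^1(\hat A/\overline G;\QQ) \cong \check H^1(\hat A;\QQ)^{\overline G}$ to the classical case of finite group actions, whence $(A \ot \QQ)^{\overline G} \cong \check H^1(\bT^n;\QQ) \cong \QQ^n$. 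In particular $\rank A \ge n$, so $\rank A = n$; but then the invariant subspace $(A \ot \QQ)^{\overline G}$ already exhausts the $n$-dimensional space $A \ot \QQ$, which means $\overline G$ fixes $A \ot \QQ$, and hence the torsion-free group $A$, pointwise.

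With the action trivial we get $\hat A \cong \hat A/\overline G \cong \bT^n$, so $A \cong \ZZ^n$ by Pontryagin duality and $G$ acts trivially on $A$, as claimed; alternatively, once $A \cong \ZZ^n$ is in hand one may finish through Lemma \ref{lem:finite-group-action-trivial}, as the image of $G$ lies in the discrete group $\Aut(\bT^n) = \GL_n(\ZZ)$ and is finite by Minkowski's theorem. I expect the dimension count to be the main obstacle, because it is exactly there that the homeomorphism type of $\bT^n$, and not merely its homotopy type, is used: rational Betti numbers alone cannot distinguish $\bT^n$ from, say, $\bT^n \times [0,1]$, which arises as the quotient of $\bT^{n+1}$ by the flip $-1$ on the last coordinate and has the same rational cohomology as $\bT^n$. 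Some genuinely dimension-theoretic input (fibres of $q$ being zero-dimensional, together with $\dim \hat A = \rank A$ and the Hurewicz inequality) therefore seems unavoidable, and care must be taken with metrizability of the spaces involved, which is where I would scrutinise the argument most closely.
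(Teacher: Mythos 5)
Your proposal is correct in substance, and it takes a genuinely different route from the paper's. Both arguments start the same way---using finiteness of orbits to replace $G$ by a compact acting object, yours via the profinite closure $\overline{G}$, the paper's via the directed family of finitely generated $G$-invariant subgroups $A_i$ with finite quotient actions $F_i$---and both need a dimension-theoretic step to get $\rank A \leq n$. After that they diverge: the paper uses topological freeness (Lemma \ref{lem:finite-group-acting-on-zn}) to see that $\hat A_i/F_i$ has a dense open locally Euclidean subset, a division-closure trick to factor the whole action through a single finite group $F$, the structure theorem for compact abelian groups to conclude $\hat A \cong \bT^n$, and finally the $\pi_1$/Hopfian argument of Lemma \ref{lem:finite-group-action-trivial} to kill $F$. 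You instead run one cohomological computation: continuity of \v{C}ech cohomology under inverse limits of compacta plus the transfer isomorphism for finite group actions gives $\check{\mathrm{H}}^1(\hat A/\overline{G};\QQ) \cong (A \ot \QQ)^{G}$, and comparison with $\check{\mathrm{H}}^1(\bT^n;\QQ) \cong \QQ^n$ yields the lower bound $\rank A \geq n$ \emph{and} triviality of the action in one stroke, bypassing topological freeness, the structure theorem, and Lemma \ref{lem:finite-group-action-trivial} entirely. Your observation that homotopy-theoretic information cannot suffice---illustrated by $\bT^{n+1}/(\text{flip}) \cong \bT^n \times [0,1]$---is exactly the right sanity check: with $A = \ZZ^{n+1}$ and the flip on the last coordinate, \emph{all} \v{C}ech groups of $\hat A/G$ agree with those of $\bT^n$, so no transfer-type argument alone can produce the upper bound on the rank; some dimension theory is unavoidable, in your proof and in the paper's alike.

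The one step to shore up is the one you flagged yourself: the Hurewicz dimension-lowering theorem is classical only for metrizable spaces, whereas $\hat A$ is metrizable only when $A$ is countable. This is a genuine but repairable point. For proper closed surjections with zero-dimensional fibres the inequality $\dim \hat A \leq \dim(\hat A/\overline{G})$ does hold for paracompact (in particular compact Hausdorff) spaces; this generalization goes back to Zarelua and Skljarenko, and in your situation it can also be obtained in the same sheaf-theoretic spirit as your transfer step: the Leray spectral sequence of $q$ with proper base change gives $R^j q_* \QQ = 0$ for $j > 0$ (the fibres are zero-dimensional compacta), so $\Lambda^k(A \ot \QQ) \cong \check{\mathrm{H}}^k(\hat A;\QQ) \cong \mathrm{H}^k(\bT^n; q_*\QQ)$, which vanishes for $k > n$ by Grothendieck vanishing on the $n$-dimensional base; hence $\rank A \leq n$. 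So a citation or a short sheaf-theoretic argument closes the gap, while the paper sidesteps the issue by keeping all of its dimension theory on the metrizable spaces $\hat A_i$ and $\hat A_i/F_i$. Finally, note that your suggested alternative ending via Lemma \ref{lem:finite-group-action-trivial} and Minkowski is slightly circular as positioned, since it presupposes $A \cong \ZZ^n$, which in your argument only becomes available after triviality of the action is established; the main line of your proof does not depend on it.
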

\begin{proof}
  Since $G$ acts with finite orbits on $A$, we can write $A = \bigcup_{i \in I} A_i$ as a directed union of subgroups, each generated by a finite $G$-invariant set.  Since every automorphism of $A_i$, which is trivial on its finite generating set, is trivial on $A_i$, it follows that $G \grpaction{} A_i$ factors through a finite group, say $G \thra F_i$.  Note that since $A$ is torsion free, $\hat A_i$ is connected for each $i$, so that Lemma \ref{lem:finite-group-acting-on-zn} says that $F_i$ acts topologically freely on $\hat A_i$.  So $\hat A_i/F_i$ contains an open and dense subset that is locally Euclidean of dimension $\rank A_i$.  In particular the covering dimension of $\hat A_i/F_i$ equals the rank of $A_i$.  (See e.g. \cite[p.305]{munkres00} for the notion of covering dimension).

  We put $X = \spec{\Cstar(A)^G}$ and observe that the inclusions $\Cstar(A)^G \supset \Cstar(A_i)^{F_i}$ and $\Cstar(A) \supset \Cstar(A)^G$ provide us with quotient maps $X \thra \hat A_i/F_i$ and $\hat A \thra X$.  The following commutative diagram shows that the former is an open map.
  \begin{equation*}
    \xymatrix
    {
      X \ar@{->>}[r] & \hat A_i/F_i \\
      \hat A  \ar@{->>}[u]^{\text{cont}} \ar@{->>}[r]_{\text{open}} & \hat A_i \ar@{->>}[u]_{\text{open}}
    }
  \end{equation*}
  Indeed, both maps marked as open in the diagram arise as quotient maps from group actions.  By hypothesis, we have $X \cong \bT^n$.   It follows that
  \begin{equation*}
    \rank(A_i) = \dim \hat A_i/F_i \leq \dim X = n
    \eqstop
  \end{equation*}
  We showed that the rank of $(A_i)_i$ is uniformly bounded by $n$, so that $\rank A \leq n$ follows.  So there is a finite $G$-invariant subset $S \subset A$ such that writing $B = \langle S \rangle$, the division closure satisfies $A = \{g \in A \mid \exists k \in \NN_{\geq 1}: g^k \in B\}$.  The action of $G$ on $B$ factors through a finite group $F$, and so does the action on $A$: indeed, if $a \in A$ and $k \in \NN_{\geq 1}$ such that $a^k \in B$, then $(g a)^k=g a^k = a^k$ implies that $((ga) \cdot a^{-1})^k = e$.  Since $A$ is torsion-free, it follows that $g a = a$.

  To finish the proof note that $F \grpaction{} \hat A$ is topologically free by Lemma \ref{lem:finite-group-acting-on-zn}, since $\hat A$ is connected.  Further, $\cont(\hat A/F) \cong \Cstar(A)^G \cong \cont(\bT^n)$ showing that $\hat A$ is locally euclidean of dimension $n$, which in turn implies that $\hat A \cong \bT^n$ by the structure theorem for abelian groups \cite[Theorem~4.2.4]{deitmarechterhoff14}.  So $A \cong \ZZ^n$ and by Lemma \ref{lem:finite-group-action-trivial}, we find that $F$ acts trivially on $A$.  So also $G$ acts trivially on $A$.
\end{proof}

\subsection{ FC-groups and the centre of group C*-algebras}
\label{sec:fc-groups}

In this section, we give a short account of Proposition \ref{prop:torsion-free-fc}, providing a description of the centre of a reduced group \Cstar-algebra by means of certain abelian subgroups.  To this end we will analyse its FC-centre.  An FC-group is a group $G$  whose conjugacy classes are finite.  More generally, the FC-centre of a group $G$ is $\{g \in G \mid g \text{ has a finite conjugacy class}\}$.
The following theorem describes finitely generated FC-groups.
\begin{theorem}[\mbox{Special case of Theorem 2 of \cite{duguidmclain55}}]
  \label{thm:fg-fc-hypercentral}
  A finitely generated FC-group is virtually abelian.
\end{theorem}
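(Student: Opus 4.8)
The plan is to exhibit an abelian subgroup of finite index directly, namely the centre $\cZ(G)$ itself. First I would restate the FC-condition in terms of centralisers: for every $g \in G$ the centraliser $C_G(g) = \{x \in G \mid xg = gx\}$ has finite index in $G$. This is immediate from the orbit–stabiliser correspondence, since the index $[G : C_G(g)]$ equals the cardinality of the conjugacy class of $g$, which is finite precisely because $G$ is an FC-group.

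The key step is then to bring finite generation into play. Writing $G = \langle g_1, \dots, g_k \rangle$, an element of $G$ is central if and only if it commutes with each of the generators $g_1, \dots, g_k$, so that $\cZ(G) = \bigcap_{i=1}^k C_G(g_i)$. This is a finite intersection of finite-index subgroups, and since the index is submultiplicative, $[G : H \cap K] \leq [G:H]\,[G:K]$, it follows that $\cZ(G)$ has finite index in $G$. As the centre is abelian, $\cZ(G)$ is an abelian subgroup of finite index, and $G$ is virtually abelian, as claimed.

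I do not expect a genuine obstacle in this argument; its content is essentially organisational. The one place where finite generation is indispensable is the reduction of the centre to an intersection of finitely many centralisers: without it the defining intersection $\cZ(G) = \bigcap_{g \in G} C_G(g)$ ranges over all of $G$ and need not have finite index. Indeed, a restricted direct sum of infinitely many copies of a fixed finite nonabelian group is an FC-group that fails to be virtually abelian, which shows that the hypothesis cannot be dropped. I would also note that this elementary argument recovers only the special case of Theorem~2 of \cite{duguidmclain55} stated here, and sidesteps the full structural analysis in that reference.
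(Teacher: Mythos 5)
Your proof is correct. It is also genuinely different from the paper's treatment: the paper offers no argument at all for this statement, simply citing it as a special case of Theorem~2 of \cite{duguidmclain55}, whose proof goes through the general structure theory of FC-nilpotent groups. Your argument is self-contained and elementary: each step (the orbit--stabiliser identity $[G : C_G(g)] = |g^G|$, the identity $\cZ(G) = \bigcap_{i=1}^k C_G(g_i)$ for a finite generating set, and Poincar\'e's bound $[G : H \cap K] \leq [G:H]\,[G:K]$) is standard and correctly applied, and your counterexample $\bigoplus_{n \in \NN} F$ with $F$ finite non-abelian does show the necessity of finite generation, since any abelian subgroup of $F^N$ has index at least $([F : M])^N$ where $M$ is an abelian subgroup of $F$ of maximal order. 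Moreover, your route proves something strictly stronger than the stated theorem: not merely that \emph{some} abelian subgroup has finite index, but that $\cZ(G)$ itself does, i.e.\ $G$ is centre-by-finite. This is precisely the conclusion the paper later needs in the proof of Proposition~\ref{prop:torsion-free-fc}, where it is obtained by combining the present theorem with Baer's theorem \cite[Theorem 2]{bear48}; your argument would let one bypass both external references there in the finitely generated case. What the paper's citation buys instead is the full strength of Duguid--McLain's result, which applies beyond the situation treated here, but for the purposes of this article your elementary proof suffices.
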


\noindent The following proposition is classical.  We provide a short proof for the convenience of the reader.
\begin{proposition}
  \label{prop:torsion-free-fc}
  A torsion-free FC-group is abelian.
\end{proposition}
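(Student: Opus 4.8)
The plan is to reduce the statement to finitely generated subgroups, show that for such a subgroup the centre has finite index, and then let a classical theorem of Schur together with torsion-freeness force commutativity.

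First I would observe that it suffices to prove that every finitely generated subgroup $H \leq G$ is abelian, since a group is abelian as soon as any two of its elements commute, i.e.\ as soon as every two-generated subgroup is abelian. The key point making this reduction legitimate is that the FC-property passes to subgroups: if $H \leq G$ and $h \in H$, then the $H$-conjugacy class of $h$ is contained in its $G$-conjugacy class and is therefore finite. Torsion-freeness is of course inherited by subgroups as well. Thus the proposition is reduced to the case of a finitely generated torsion-free FC-group $H = \langle g_1, \dots, g_k \rangle$.

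Next I would exploit finite generation to locate the centre. For each generator the conjugacy class is finite and, by the orbit–stabiliser correspondence for the conjugation action, it is in bijection with $H / C_H(g_i)$; hence each centralizer $C_H(g_i)$ has finite index in $H$. An element of $H$ is central exactly when it commutes with every generator, so $\cZ(H) = \bigcap_{i=1}^{k} C_H(g_i)$ is a \emph{finite} intersection of finite-index subgroups and therefore itself has finite index in $H$. This is the one place where finite generation is indispensable: for a general FC-group the centre may well have infinite index, so the reduction to the finitely generated case is essential.

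Finally I would invoke Schur's classical theorem: if $[H : \cZ(H)] < \infty$ then the commutator subgroup $[H,H]$ is finite. Since $H$ is torsion-free it contains no nontrivial finite subgroup, whence $[H,H] = \{e\}$ and $H$ is abelian; the proposition follows. The substantive step, and the one I would expect to be the main obstacle were Schur's theorem not taken as known, is precisely the passage from ``centre of finite index'' to ``finite commutator subgroup''; torsion-freeness then enters only at the very end, but decisively, to annihilate the resulting finite obstruction. As an alternative to citing Schur one could instead feed the finitely generated FC-group $H$ into the Duguid--McLain Theorem~\ref{thm:fg-fc-hypercentral} to realise it as virtually abelian, pick a finite-index normal abelian subgroup $A \cong \ZZ^{r}$, and use finiteness of the conjugacy class of a lift of each coset to conclude that the conjugation action on $A$ is trivial, so that $A$ is central and $[H : \cZ(H)] < \infty$ once more; one would still appeal to a Schur-type argument to pass from this to abelianness.
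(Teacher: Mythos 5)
Your proof is correct, and it follows the same overall skeleton as the paper's (reduce to the finitely generated case, show the centre has finite index, then finish with Schur's theorem and torsion-freeness), but you justify the crucial middle step differently. After the same reduction, the paper cites Theorem \ref{thm:fg-fc-hypercentral} (Duguid--McLain) to see that a finitely generated FC-group is virtually abelian, and then invokes Baer's theorem \cite[Theorem 2]{bear48} to conclude that the centre has finite index. You instead prove this directly and elementarily: by orbit--stabiliser each centralizer $C_H(g_i)$ of a generator has finite index since the conjugacy class of $g_i$ is finite, and $\cZ(H) = \bigcap_{i=1}^{k} C_H(g_i)$ is a finite intersection of finite-index subgroups, hence of finite index. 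This buys self-containedness, replacing two external structure results with a two-line argument, at the cost of not exhibiting the virtually abelian structure (which the paper has already set up in Theorem \ref{thm:fg-fc-hypercentral} for its own sake). Both arguments then pass from finite-index centre to finite commutator subgroup via Schur's theorem (the paper uses this implicitly without naming it, so you are not importing anything the paper does not also rely on) and kill the finite commutator subgroup by torsion-freeness. The alternative route you sketch at the end, through Duguid--McLain, is essentially the paper's own proof.
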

\begin{proof}
  It suffices to prove that every finitely generated torsion-free FC-group $G$ is abelian.  By Theorem \ref{thm:fg-fc-hypercentral} we know that $G$ is virtually abelian.  So \cite[Theorem 2]{bear48} implies that $\cZ(G) \leq G$ has finite index.  It follows that the commutator subgroup of $G$ is finite, and hence trivial since $G$ is torsion-free.
\end{proof}

\noindent The next proposition describes the centre of the group \Cstar-algebra of a torsion-free group in terms of an abelian group.
\begin{proposition}
  \label{prop:centre-general-group-cstar-algebra}
  Let $G$ be a torsion-free group.   Then the FC-centre of $G$ is a normal abelian subgroup $A \unlhd G$ satisfying $\cZ(\Cstarred(G)) \subset \Cstar(A)$.  In particular, $\cZ(\Cstarred(G)) =  \Cstar(A)^G$.
\end{proposition}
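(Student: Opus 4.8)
The plan is to establish the three assertions in turn: that the FC-centre $A$ is an abelian normal subgroup, that every central element of $\Cstarred(G)$ is supported on $A$, and finally the identification with the fixed-point algebra $\Cstar(A)^G$.

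First I would observe that the FC-centre $A$ is a normal subgroup of $G$. It is closed under products because the conjugacy class of $gh$ is contained in the pointwise product of the conjugacy classes of $g$ and $h$, closed under inverses because inversion preserves the cardinality of a conjugacy class, and normal because conjugation permutes conjugacy classes. Since the $A$-conjugacy class of an element of $A$ sits inside its $G$-conjugacy class, $A$ is itself an FC-group, and as a subgroup of a torsion-free group it is torsion-free. Proposition \ref{prop:torsion-free-fc} then forces $A$ to be abelian.

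The technical heart is the inclusion $\cZ(\Cstarred(G)) \subset \Cstar(A)$, which I would prove by Fourier analysis against the canonical trace. Write $\tau$ for the canonical faithful trace on $\Cstarred(G)$ determined by $\tau(u_g) = \delta_{g,e}$, and for $x \in \Cstarred(G)$ set $\hat x(g) = \tau(u_g^* x) = \langle x \delta_e, \delta_g \rangle$. Because $x\delta_e \in \ell^2 G$ we have $\sum_g \lvert \hat x(g) \rvert^2 = \lVert x\delta_e \rVert^2 < \infty$. If $x$ is central, then $u_h^* x u_h = x$ together with traciality gives $\hat x(hgh^{-1}) = \hat x(g)$, so $\hat x$ is constant on each conjugacy class. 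An infinite conjugacy class would contribute infinitely many equal terms to a convergent sum of squares, so $\hat x$ must vanish off the FC-centre $A$. To promote this support statement to genuine membership $x \in \Cstar(A)$, I would invoke the conditional expectation $\rE \colon \Cstarred(G) \ra \Cstarred(A)$ recalled in the preliminaries, which acts on coefficients by $\widehat{\rE(x)}(g) = \mathbb{1}_A(g)\,\hat x(g)$. Since $\hat x$ already lives on $A$, the elements $x$ and $\rE(x)$ have identical Fourier coefficients, so $\tau\bigl((x-\rE(x))^*(x-\rE(x))\bigr) = \sum_g \lvert \hat x(g) - \widehat{\rE(x)}(g)\rvert^2 = 0$, and faithfulness of $\tau$ yields $x = \rE(x) \in \Cstarred(A) = \Cstar(A)$, the last equality holding because $A$ is amenable. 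I expect this promotion step to be the one real subtlety: support in $A$ only places $x$ inside the von Neumann algebra $\rL(A)$ a priori, and it is precisely the conditional expectation that keeps us inside the reduced \Cstar-algebra.

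Finally, for the equality $\cZ(\Cstarred(G)) = \Cstar(A)^G$ I would argue both inclusions directly. A central $x$ lies in $\Cstar(A)$ by the above and satisfies $u_h x u_h^* = x$ for all $h$, hence is $G$-fixed, giving the inclusion ``$\subseteq$''. Conversely, if $x \in \Cstar(A)^G$, then $u_g x u_g^* = x$ says $x$ commutes with every $u_g$ for $g \in G$; since $A$ is abelian, $x$ also commutes with all of $\Cstar(A)$, so $x$ commutes with a generating set of $\Cstarred(G)$ and is therefore central, giving ``$\supseteq$''.
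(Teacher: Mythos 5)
Your proposal is correct and follows essentially the same route as the paper: identify the FC-centre as a torsion-free FC-group (hence abelian by Proposition \ref{prop:torsion-free-fc}), then use the $\ell^2$-summability of the Fourier coefficients $\hat x = x\delta_e$ of a central element, which are constant on conjugacy classes, to force support in $A$. Your extra step promoting ``supported on $A$'' to $x \in \Cstar(A)$ via the conditional expectation $\rE\colon \Cstarred(G) \ra \Cstarred(A)$ and faithfulness of the trace is a careful filling-in of a detail the paper's proof leaves implicit, not a different argument.
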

\begin{proof}
  The FC-centre $A \unlhd G$ is a torsion-free FC-group and hence abelian by Proposition \ref{prop:torsion-free-fc}.  We have to show that $\cZ(\Cstarred G) \subset \Cstar(A)$.  If $x \in \cZ(\Cstarred G)$ we write $\hat x = x \delta_e = \sum_{g \in G} x_g \delta_g \in \ltwo(G)$.  Since $x$ is central, we have $\hat x = \widehat{u_h x u_h^*}$ for all $h \in G$ and hence $x_g = x_{hgh^{-1}}$ for all $h \in G$.  Since $(x_g)_g$ is $\ell^2$-summable, it follows that $x_g = 0$, unless $g$ has a finite conjugacy class in $G$, meaning $g \in A$.  So $x \in \Cstar(A)$.
\end{proof}

\subsection{Central extensions and cocycles}
\label{sec:central-extensions}

In this section we clarify the exact relation between $n$-step nilpotent groups and extension data for $n-1$-step nilpotent groups.  The book \cite{brown10-cohomology} provides a reference for the cohomology of discrete groups.  Recall that given a group $H$ and an abelian group $A$, there is a one-to-one correspondence between central extensions
\begin{equation*}
  \xymatrix{
    1 \ar[r] & A \ar[r] & G \ar[r] & H \ar[r] & 1
  }
\end{equation*}
and elements of $\rH^2(H, A)$, which sends an extension after choice of a section $s: H \ra G$ to the 2-cocycle $\sigma(h_1, h_2) = s(h_1h_2)^{-1} s(h_1)s(h_2)$, which we call an extension cocycle  \cite[Theorem 3.12]{brown10-cohomology}.  Two central extensions giving rise to $G_1$, $G_2$ are called equivalent, if there is an isomorphism $G_1 \ra G_2$ making the following diagram commute
\begin{equation*}
  \xymatrix{
    1 \ar[r] & A \ar[r]  \ar^{\id}[d] & G_1 \ar[r] \ar[d] & H \ar[r] \ar^{\id}[d] & 1 \\
    1 \ar[r] & A \ar[r] & G_2 \ar[r] & H \ar[r] & 1 \eqstop
  }
\end{equation*}
Note that the automorphisms of $A$ and $H$ are fixed to be the identity.  Aiming at a classification of central extensions without this restriction, we introduce the action of $\Aut(H) \times \Aut(A)$ on $\rH^2(H, A)$ that is induced by $\rho \mapsto \psi_A \circ \rho \circ (\psi_H \times \psi_H)$ with $\psi_H \in \Aut(H)$, $\psi_A \in \Aut(A)$, $\rho \in \rZ^2(H, A)$.

Before we proceed to spell out the relation between $n$-step nilpotent groups and extension data for $n-1$-step nilpotent groups, we need to introduce the following notation.
\begin{notation}
  \label{not:centrally-non-degenerate}
  Let $H$ be a group and $A$ an abelian group.  A 2-cocycle $\sigma \in \rZ^2(H, A)$ is called \emph{centrally non-degenerate} if for all non-trivial elements $g \in\cZ(H)$ there is $h \in H$ such that $\sigma(g,h) \neq \sigma(h,g)$.
\end{notation}
The following theorem must be well-known.  Not being able to find a reference we indicate a proof for the reader's convenience.
\begin{theorem}
  \label{thm:classify-central-extensions}
  Let $H$ be a group and $A$ an abelian group. Associating to a central extension of $A$ by $H$ the extension 2-cocycle in $\rH^2(H,A)$, we obtain a bijection between
  \begin{itemize}
  \item isomorphism classes of groups $G$ such that $\cZ(G) \cong A$ and $G/\cZ(G) \cong H$, and
  \item $\Aut(H) \times \Aut(A)$ orbits of centrally non-degenerate cocycles in $\rH^2(H, A)$.
  \end{itemize}
\end{theorem}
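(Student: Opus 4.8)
The plan is to route everything through the classical bijection between $\rH^2(H,A)$ and strict equivalence classes of central extensions of $A$ by $H$, cited as \cite[Theorem 3.12]{brown10-cohomology}, and then to account for the two extra ingredients present here: the freedom of choosing the identifications $\cZ(G)\cong A$ and $G/\cZ(G)\cong H$, and the requirement that $A$ be the \emph{full} centre rather than merely a central subgroup. First I would construct the map. Given a group $G$ with $\cZ(G)\cong A$ and $G/\cZ(G)\cong H$, fix such isomorphisms in order to present $G$ as a central extension $1\to A\to G\to H\to 1$ and record its class in $\rH^2(H,A)$. Changing the section used to form the extension cocycle alters it only by a coboundary, so the class is well defined; changing the two identifying isomorphisms alters the class exactly by the $\Aut(H)\times\Aut(A)$ action introduced before the statement; and an abstract isomorphism $G_1\cong G_2$ carries $\cZ(G_1)$ onto $\cZ(G_2)$ and descends to $H$, hence produces classes lying in a common orbit. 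This gives a well-defined map from isomorphism classes to $\Aut(H)\times\Aut(A)$ orbits.

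The technical heart is to pin down the image and to show that central non-degeneracy is an orbit-level notion. Realising $G$ concretely as $A\times H$ with the $\sigma$-twisted product $(a_1,h_1)(a_2,h_2)=(a_1a_2\,\sigma(h_1,h_2),\,h_1h_2)$, a direct computation shows that $(a,h)$ is central if and only if $h\in\cZ(H)$ and $\sigma(h,h')=\sigma(h',h)$ for every $h'\in H$ (the $A$-coordinate condition uses only that $A$ is abelian). Consequently the subgroup $A\times\{e\}$ coincides with $\cZ(G)$ precisely when $\sigma$ is centrally non-degenerate, so the map lands in the non-degenerate orbits. I would then verify that the \emph{antisymmetric part} $h'\mapsto \sigma(h,h')\sigma(h',h)^{-1}$ attached to $h\in\cZ(H)$ is unchanged by coboundaries, since for $h\in\cZ(H)$ one has $hh'=h'h$ and the boundary contribution collapses to $c(hh')^{-1}c(h'h)=e$ as $A$ is abelian; and that this part transforms equivariantly under $(\psi_H,\psi_A)\cdot\sigma=\psi_A\circ\sigma\circ(\psi_H\times\psi_H)$, because $\psi_H$ permutes $\cZ(H)$ and $\psi_A$ is injective. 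Hence central non-degeneracy depends only on the orbit.

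With this in hand, injectivity and surjectivity are formal. If $G_1$ and $G_2$ map to the same orbit, then after replacing the identifying isomorphisms of $G_1$ by the ones twisted through the appropriate $(\psi_H,\psi_A)$, the two central extensions become strictly equivalent in $\rH^2(H,A)$; the classical classification then furnishes the vertical group isomorphism $G_1\cong G_2$, so the map is injective. For surjectivity, given a centrally non-degenerate class I would take the extension group $G=A\times_\sigma H$ built from any representative cocycle $\sigma$; the centre computation of the previous paragraph gives $\cZ(G)=A\times\{e\}\cong A$ and $G/\cZ(G)\cong H$, so $G$ is a group of the required form whose associated orbit is that of $[\sigma]$.

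I expect the only genuinely delicate step to be the second paragraph: the explicit centre computation together with the two checks that central non-degeneracy is invariant under coboundaries and under the $\Aut(H)\times\Aut(A)$ action, which together make ``centrally non-degenerate'' a meaningful condition on orbits. Once these are established, the remaining bijection argument is routine bookkeeping on top of \cite[Theorem 3.12]{brown10-cohomology}.
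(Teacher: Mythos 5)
Your proposal is correct and follows essentially the same route as the paper's proof: both pass through the classical $\rH^2(H,A)$ classification of central extensions, use the twisted product $A \rtimes_\sigma H$ to construct the inverse map, and identify central non-degeneracy of $\sigma$ with the condition $\cZ(A \rtimes_\sigma H) = A \times \{e\}$. Your explicit checks that central non-degeneracy is invariant under coboundaries and under the $\Aut(H) \times \Aut(A)$ action are details the paper leaves implicit, but they do not change the structure of the argument.
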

\begin{proof}
  We provide maps which are inverse to each other.  Given $G$ such that $\cZ(G) \cong A$ and $G/\cZ(G) \cong H$, we choose isomorphisms $\vphi_A: \cZ(G) \ra A$ and $\vphi_H: G/ \cZ(G) \ra H$.  This provides us with an extension
  \begin{equation*}
    \xymatrix
    {
      1 \ar[r] & A \ar[r] & G \ar[r] & H \ar[r] & 1
    }
  \end{equation*}
  and after choice of a section $s: H \ra G$ with a cocycle $\sigma \in \rZ^2(H, A)$, whose cohomology class is independent of the choice of the section.  Note that $\sigma$ is centrally non-degenerate, since $s(H) \cap \cZ(G) = \{e\}$.  Further, the orbit of $[\sigma] \in \rH^2(H, A)$ for the $\Aut(H) \times \Aut(A)$ action is independent of the choice of $\vphi_A$ and $\vphi_H$.  So we obtain a well-defined map assigning to the group $G$ an $\Aut(H) \times \Aut(A)$ orbit of centrally non-degenerate cocycles in $\rH^2(H, A)$.  Since every isomorphism $G \cong \tilde G$ restricts to an isomorphism of $\cZ(G) \cong \cZ(\tilde G)$ and induces an isomorphism $G/\cZ(G) \cong \tilde G / \cZ(\tilde G)$, this assignment descends to a map from isomorphism classes of groups $G$ satisfying that $\cZ(G) \cong A$ and $G/\cZ(G) \cong H$ to $\Aut(H) \times \Aut(A)$ orbits of centrally non-degenerate cocycles in $\rH^2(H, A)$.

  Let now $\sigma \in \rZ^2(H, A)$ and define $G = A \rtimes_\sigma H$, which as a set is the Cartesian product $A \times H$ and becomes a group with the multiplication
  \begin{equation*}
    (a_1, h_1) (a_2, h_2) = (a_1 a_2 \sigma(h_1, h_2), h_1h_2)
    \eqstop
  \end{equation*}
  The isomorphism class of the extension
  \begin{equation*}
    \xymatrix@R=0pc
    {
      1 \ar[r] & A \ar[r] & G \ar[r] & H \ar[r] & 1 \\
      & a \ar@{|->}[r] & (a, e) &&
    }
  \end{equation*}
  does only depend on the class of $\sigma$ in $\rH^2(H, A)$.  Note that if $\sigma$ is centrally non-degenerate, then $\cZ(G) = A$.  Further, if $\psi_A \in \Aut(A)$, $\psi_H \in \Aut(H)$ and we define $\tilde \sigma = \psi_A \circ \sigma \circ (\psi_H^{-1} \times \psi_H^{-1})$ as well as $\tilde G = A \rtimes_{\tilde \sigma} H$, then
  \begin{equation*}
    G \ni (a, h) \mapsto (\psi_A(a), \psi_H(h)) \in \tilde G
  \end{equation*}
  is a group isomorphism.  So we obtain a well-defined map from $\Aut(H) \times \Aut(A)$ orbits in $\rH^2(H, A)$ to isomorphism classes of groups $G$ such that $\cZ(G) \cong A$ and $G/\cZ(G) \cong H$.  
\end{proof}

\noindent We specialise the statement of Theorem \ref{thm:classify-central-extensions} to the case of nilpotent groups.  For later use in Section~\ref{sec:superrigidity}, the next corollary will be reformulated in Section \ref{sec:cocycles-skew-symmetric-forms}.
\begin{corollary}
  \label{cor:extension-data}
  Associating to a nilpotent group $G$ the triple $(\cZ(G), G/\cZ(G),\sigma_G)$, where we denote by $\sigma_G \in \rH^2(G/\cZ(G), \cZ(G))$ the extension cocycle, we obtain a bijection between
  \begin{itemize}
  \item isomorphism classes of $n$-step nilpotent groups, and
  \item equivalence classes of triples $(A, H, \sigma)$ with $A$ abelian, $H$ an $n-1$-step nilpotent group and $\sigma \in \rH^2(H, A)$ a centrally non-degenerate cocycle.  Two triples $(A, H, \sigma)$ and $(\tilde A, \tilde H, \tilde \sigma)$ are equivalent if there are isomorphisms  $\vphi_A: A \ra \tilde A$ and $\vphi_H: H \ra \tilde H$ such that $\tilde \sigma \circ (\vphi_H \times \vphi_H) = \vphi_A \circ \sigma$.
  \end{itemize}
\end{corollary}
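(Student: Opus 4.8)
The plan is to deduce this corollary directly from Theorem \ref{thm:classify-central-extensions} by (a) recording how nilpotency class behaves under passing to the central quotient and (b) reconciling the two notions of equivalence. Recall that the assignment sends a nilpotent group $G$ to the triple $(\cZ(G), G/\cZ(G), \sigma_G)$, where $\sigma_G$ is the extension cocycle of $1 \ra \cZ(G) \ra G \ra G/\cZ(G) \ra 1$. The centre $\cZ(G)$ is abelian by definition, so the only genuinely new input beyond the theorem is the interplay between the nilpotency class of $G$ and that of $G/\cZ(G)$.

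First I would establish the class bookkeeping. Writing $\overline{G} = G/\cZ(G)$ with quotient map $\pi$, the standard identity for the upper central series reads $Z_i(\overline{G}) = Z_{i+1}(G)/\cZ(G)$ for all $i \geq 0$, which follows inductively from the defining property $Z_{i+1}(G) = \{g \mid [g,h] \in Z_i(G) \text{ for all } h \in G\}$ by pushing forward along $\pi$. Consequently $Z_{n-1}(\overline{G}) = \overline{G}$ holds minimally precisely when $Z_n(G) = G$ holds minimally; that is, $G$ is $n$-step nilpotent if and only if $\overline{G}$ is $(n-1)$-step nilpotent. This supplies both directions needed for the bijection. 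In the forward direction, if $G$ is $n$-step nilpotent then $H = G/\cZ(G)$ is an $(n-1)$-step nilpotent group and, by Theorem \ref{thm:classify-central-extensions}, the cocycle $\sigma_G$ is centrally non-degenerate. In the backward direction, starting from a triple $(A, H, \sigma)$ with $H$ an $(n-1)$-step nilpotent group and $\sigma$ centrally non-degenerate, the group $G = A \rtimes_\sigma H$ satisfies $\cZ(G) = A$ (this is exactly the content of the non-degeneracy hypothesis as recorded in the theorem), whence $G/\cZ(G) \cong H$ is $(n-1)$-step nilpotent and the identity above forces $G$ to be $n$-step nilpotent.

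It then remains to match the equivalence relations. Theorem \ref{thm:classify-central-extensions} classifies, for fixed $A$ and $H$, the groups $G$ with $\cZ(G) \cong A$ and $G/\cZ(G) \cong H$ by $\Aut(H) \times \Aut(A)$ orbits of centrally non-degenerate classes in $\rH^2(H,A)$. I would observe that the corollary's relation on triples, when restricted to a fixed pair of isomorphism types $(A,H)$, is exactly this orbit relation: two cocycles $\sigma, \tilde\sigma \in \rH^2(H,A)$ define equivalent triples via automorphisms $\vphi_A \in \Aut(A)$, $\vphi_H \in \Aut(H)$ with $\tilde\sigma \circ (\vphi_H \times \vphi_H) = \vphi_A \circ \sigma$ if and only if they lie in the same $\Aut(H) \times \Aut(A)$ orbit. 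More generally, two triples with $A \not\cong \tilde A$ or $H \not\cong \tilde H$ are never equivalent, and in the case $A \cong \tilde A$, $H \cong \tilde H$ one transports $\tilde\sigma$ back along chosen isomorphisms to reduce to the fixed case. Assembling Theorem \ref{thm:classify-central-extensions} over all isomorphism types of pairs $(A,H)$ with $H$ an $(n-1)$-step nilpotent group then yields the asserted bijection, and the same transport argument shows that the forward assignment $G \mapsto (\cZ(G), G/\cZ(G), \sigma_G)$ is independent of the auxiliary choices of $\vphi_A$ and $\vphi_H$.

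The main obstacle is the class bookkeeping of the second paragraph: one must check that the minimality implicit in being $n$-step transfers correctly across the quotient, so that the class drops by exactly one rather than merely being bounded, and that the centrally non-degenerate hypothesis is precisely what pins $\cZ(G)$ down to $A$ (and not to something strictly larger) so that $G/\cZ(G) \cong H$ on the nose. Everything else is a repackaging of the theorem, so no further analytic or cohomological input is required.
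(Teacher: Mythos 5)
Your proposal is correct and takes essentially the same route the paper intends: the paper gives no separate proof of this corollary, treating it as a direct specialisation of Theorem \ref{thm:classify-central-extensions}, which is exactly what you do. Your explicit verifications — the upper central series identity $Z_i(G/\cZ(G)) = Z_{i+1}(G)/\cZ(G)$ showing the class drops by exactly one, that central non-degeneracy pins down $\cZ(A \rtimes_\sigma H) = A$, and that the corollary's equivalence of triples restricted to a fixed pair $(A,H)$ is the $\Aut(H) \times \Aut(A)$ orbit relation — are precisely the details the paper leaves implicit.
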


\subsection{Cocycles and skew-symmetric forms}
\label{sec:cocycles-skew-symmetric-forms}

In this section we recall the relationship between 2-cocycles on torsion-free abelian groups and skew-symmetric forms.  It provides the reformulation of Corollary \ref{cor:extension-data} as it will be used in Section \ref{sec:superrigidity}.  The references \cite[Theorem 7.1]{kleppner63}, \cite[Chapter V.6 Exercise 5]{brown10-cohomology} or \cite[Proof of Theorem 2]{hughes51} show this result without the statement of $\Aut(B) \times \Aut(A)$-equivariance, which can be right away checked by the formula provided below.  We denote by $B \wedge B$ the wedge product, which arises as the quotient group $B \otimes_\ZZ B/ \langle b \otimes b' + b' \otimes b \mid b, b' \in B \rangle$.
\begin{theorem}
  \label{thm:cocycles-skew-symmetric-forms}
  Let $B$ be a free abelian group and $A$ any abelian group.  Then there is an $\Aut(B) \times \Aut(A)$-equivariant bijection between $\rH^2(B, A)$ and $A$-valued skew-symmetric forms on $B$, given by
  \begin{equation*}
    \rH^2(B, A) \ra \Hom(B \wedge B, A):
    \sigma \mapsto (b_1 \wedge b_2 \mapsto \sigma(b_1, b_2) - \sigma(b_2, b_1))
    \eqstop
  \end{equation*}
\end{theorem}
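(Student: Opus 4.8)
The plan is to show that the stated map $\Phi\colon \rH^2(B,A) \to \Hom(B \wedge B, A)$, sending $\sigma$ to the form $f_\sigma$ with $f_\sigma(b_1,b_2) = \sigma(b_1,b_2) - \sigma(b_2,b_1)$, is a well-defined bijection; the $\Aut(B)\times\Aut(A)$-equivariance will then be read off directly from this formula, since pre-composition in the $B$-variable and post-composition in the $A$-variable are visibly respected, exactly as the paragraph preceding the theorem asserts.

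First I would verify that $\Phi$ is well defined. The quickest route to bi-additivity of $f_\sigma$ is to identify it with a commutator pairing: in the central extension $G = A \rtimes_\sigma B$, the commutator of any two lifts of $b_1, b_2 \in B$ has trivial $B$-coordinate and $A$-coordinate equal to $f_\sigma(b_1,b_2)$, using that $B$ is abelian. Since $A$ is central and $B$ is abelian, $G$ has nilpotency class at most $2$, so this commutator map is bi-additive, and it is manifestly skew-symmetric; hence $f_\sigma$ factors through $B \wedge B$. (Alternatively, bi-additivity follows from a direct manipulation of the $2$-cocycle identity.) If $\sigma = \partial\tau$ is a coboundary, then $\sigma(b_1,b_2) - \sigma(b_2,b_1) = \tau(b_2 b_1) - \tau(b_1 b_2) = 0$ because $B$ is abelian, so $\Phi$ descends to cohomology.

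For surjectivity I would fix a basis $(e_i)_{i \in I}$ of the free abelian group $B$ together with a total order on $I$. Given a skew form $\omega \in \Hom(B \wedge B, A)$, I define a bi-additive map $\beta\colon B \times B \to A$ by $\beta(e_i, e_j) = \omega(e_i \wedge e_j)$ for $i > j$ and $\beta(e_i, e_j) = 0$ otherwise, extending bilinearly. Every bi-additive map is a $2$-cocycle, and a short computation using skew-symmetry of $\omega$ gives $f_\beta = \omega$, so $\Phi$ is onto.

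Finally, injectivity, which is where freeness of $B$ enters and which I expect to be the only non-formal point. A class in the kernel is represented by a symmetric cocycle $\sigma$, so the extension group $A \rtimes_\sigma B$ is abelian. As $B$ is free abelian, the surjection $A \rtimes_\sigma B \twoheadrightarrow B$ splits by a homomorphism—lift a basis and extend—so the extension is trivial and $\sigma$ is a coboundary; equivalently $\mathrm{Ext}^1_{\ZZ}(B,A) = 0$. This shows $\Phi$ is injective and completes the proof that it is a bijection. The conceptual counterpart of the whole argument is the universal coefficient sequence together with the computation $\rH_1(B) = B$ and $\rH_2(B) = B \wedge B$ for free abelian $B$, the vanishing $\mathrm{Ext}^1_{\ZZ}(B,A) = 0$ being precisely the ingredient that collapses that sequence to the asserted isomorphism.
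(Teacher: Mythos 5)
Your argument is correct, but note that the paper itself does not prove this theorem: it only cites Kleppner (Theorem 7.1), Brown (Chapter V.6, Exercise 5) and Hughes, and remarks that the $\Aut(B)\times\Aut(A)$-equivariance can be read off from the formula --- which is also how you handle equivariance. So your self-contained proof is a genuinely different route from the paper's proof-by-citation. All three of your ingredients are sound: identifying $f_\sigma$ with the commutator pairing of the class-two group $A\rtimes_\sigma B$ gives bi-additivity and the factorisation through $B\wedge B$, and symmetry of coboundaries (using that $B$ is abelian) makes the map well defined on cohomology; the upper-triangular bi-additive cocycle attached to a totally ordered basis gives surjectivity; and injectivity is exactly the splitting of abelian extensions over a free abelian base, i.e. $\mathrm{Ext}^1_{\ZZ}(B,A)=0$, since a class in the kernel is represented by a symmetric cocycle and hence by an abelian extension group. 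Your closing remark correctly identifies the universal-coefficient mechanism underlying the cited references; what your proof buys over the citations is that every step is elementary and explicit, at the cost of length.

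One caveat, which is a defect of the paper's formulation rather than of your argument. The paper defines $B\wedge B$ as $B\otimes_{\ZZ}B/\langle b\otimes b'+b'\otimes b\rangle$, so homomorphisms out of it are skew-symmetric but not necessarily alternating forms. Every skew-symmetrised cocycle satisfies $f_\sigma(b,b)=0$, so when $A$ has $2$-torsion the map cannot reach a form with $\omega(b\wedge b)\neq 0$: for instance $\rH^2(\ZZ,\ZZ/2\ZZ)=0$ while $\Hom(\ZZ\wedge\ZZ,\ZZ/2\ZZ)\cong\ZZ/2\ZZ$ with the paper's definition of $\wedge$. This is exactly where your surjectivity step silently uses $\omega(e_i\wedge e_i)=0$ in the claim $f_\beta=\omega$. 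With the standard exterior square (quotient by $\langle b\otimes b\rangle$), or whenever $A$ has no $2$-torsion --- in particular in the paper's application, where $A=\cZ(G)\cong\ZZ^n$ --- your proof is complete as written.
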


\begin{corollary}
  \label{cor:extension-data-skew-symmetric}
  Associating to a 2-step nilpotent group $G$ the triple $(\cZ(G), G/\cZ(G), \omega_G)$, where we denote by $\omega_G \in \Hom(G/ \cZ(G) \wedge G/ \cZ(G) , \cZ(G))$ the skew-symmetrisation of the extension cocycle, we obtain a bijection between
  \begin{itemize}
  \item isomorphism classes of torsion-free finitely generated 2-step nilpotent groups, and
  \item equivalence classes of triples $(A, B, \omega)$ with $A,B$ torsion-free finitely generated abelian and $\omega \in \Hom(B \wedge B, A)$ a $A$-valued skew-symmetric form.  Two triples $(A, B, \omega)$ and $(\tilde A, \tilde B, \tilde \omega)$ are equivalent if there are isomorphisms  $\vphi_A: A \ra \tilde A$ and $\vphi_B: B \ra \tilde B$ such that $\tilde \omega \circ (\vphi_B \wedge \vphi_B) = \vphi_A \circ \omega$.
  \end{itemize}
\end{corollary}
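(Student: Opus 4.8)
The plan is to obtain this corollary by specialising Corollary \ref{cor:extension-data} to $n = 2$ and then replacing the extension cocycle by its skew-symmetrisation via Theorem \ref{thm:cocycles-skew-symmetric-forms}. For $n = 2$, Corollary \ref{cor:extension-data} already provides a bijection between isomorphism classes of $2$-step nilpotent groups $G$ and equivalence classes of triples $(A, H, \sigma)$, where $A$ is abelian, $H = G/\cZ(G)$ is $1$-step nilpotent (hence abelian), and $\sigma \in \rH^2(H, A)$ is centrally non-degenerate. Thus two things remain: (i) to pin down, under the hypothesis that $G$ is torsion-free and finitely generated, which triples occur, namely those with $A$ and $H$ torsion-free finitely generated abelian; and (ii) to convert $\sigma$ into an $A$-valued skew-symmetric form $\omega$ on $B := H$.

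For step (i) I would show that $G$ is torsion-free finitely generated $2$-step nilpotent if and only if both $A = \cZ(G)$ and $H = G/\cZ(G)$ are torsion-free finitely generated abelian. Since a finitely generated nilpotent group is polycyclic, all of its subgroups and quotients are finitely generated, so $\cZ(G)$ and $G/\cZ(G)$ are finitely generated, and $\cZ(G)$ is torsion-free as a subgroup of $G$. The only genuinely group-theoretic point is that $G/\cZ(G)$ is torsion-free: if $g^k \in \cZ(G)$ for some $k \geq 1$, then for every $h \in G$ the class-$2$ identity $[g^k, h] = [g,h]^k$ (valid because commutators are central) together with $[g^k,h] = e$ forces $[g,h]^k = e$; since $[g,h] \in \cZ(G)$ is torsion-free we get $[g,h] = e$, i.e. $g \in \cZ(G)$. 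Conversely, any central extension $G = A \rtimes_\sigma H$ of a torsion-free $H$ by a torsion-free $A$ is itself torsion-free, because a torsion element of $G$ projects to a torsion element of $H$, hence lies in $A$, hence is trivial; moreover $G$ is finitely generated and of class at most $2$, as $[G,G] \subseteq A \subseteq \cZ(G)$.

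For step (ii), a torsion-free finitely generated abelian group is free abelian, so $B := H$ is free and Theorem \ref{thm:cocycles-skew-symmetric-forms} applies, giving a bijection $\rH^2(B, A) \cong \Hom(B \wedge B, A)$ sending $\sigma$ to $\omega_\sigma(b_1 \wedge b_2) = \sigma(b_1, b_2) - \sigma(b_2, b_1)$. Because this formula is natural, and in particular $\Aut(B) \times \Aut(A)$-equivariant, in both variables, it transports the equivalence relation of Corollary \ref{cor:extension-data}, namely $\tilde\sigma \circ (\vphi_H \times \vphi_H) = \vphi_A \circ \sigma$, exactly onto the relation $\tilde\omega \circ (\vphi_B \wedge \vphi_B) = \vphi_A \circ \omega$ required here. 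It remains to translate the non-degeneracy bookkeeping: since $H$ is abelian one has $\cZ(H) = H$, so $\sigma$ being centrally non-degenerate means that for every $g \neq e$ there is $h$ with $\sigma(g,h) \neq \sigma(h,g)$, which is precisely the statement that $\omega_\sigma$ has trivial radical, equivalently that the centre of $A \rtimes_\sigma H$ is exactly $A$. Composing the bijection of Corollary \ref{cor:extension-data} (for $n=2$) with that of Theorem \ref{thm:cocycles-skew-symmetric-forms} then yields the asserted correspondence.

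The routine parts — finite generation, freeness of $B$, and the matching of equivalence relations via naturality of skew-symmetrisation — follow immediately from the quoted results. I expect the only real content to be the group-theoretic verification in step (i), in particular the torsion-freeness of $G/\cZ(G)$, which is where the $2$-step and torsion-free hypotheses genuinely interact, together with the careful identification of the centrally non-degenerate cocycles with the (non-degenerate) forms $\omega$ that actually arise.
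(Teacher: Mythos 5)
Your proposal is correct and takes essentially the same route as the paper, which presents this corollary as the direct combination of Corollary \ref{cor:extension-data} specialised to $n=2$ with the $\Aut(B)\times\Aut(A)$-equivariant bijection of Theorem \ref{thm:cocycles-skew-symmetric-forms}; your elementary class-2 commutator argument for torsion-freeness of $G/\cZ(G)$ simply replaces the citation of Jennings in Lemma \ref{lem:torsion-free-quotients}. Your observation that the triples which actually arise are exactly those with $\omega$ of trivial radical (matching central non-degeneracy of $\sigma$) is a point the paper's statement leaves implicit, and it is the correct reading of the bijection.
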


\subsection{Homotopy classes of evaluation maps}
\label{sec:characters}

The following statement is easily checked on generators and will be used in Section \ref{sec:superrigidity}.
\begin{proposition}
  \label{prop:pairing-zn}
  Let $h: \rS^1 \ra \bT^n$ be a loop, let $g \in \ZZ^n \cong \hat \bT^n$ and let $\ev_g: \bT^n \ra \rS^1$ be the evaluation map.  The natural identification $\pi_1( \bT^n) \cong \ZZ^n$ induces a bilinear form $\langle \phantom{x}, \phantom{x} \rangle$ such that $\pi_1(\ev_g)([h]) = \langle g , [h] \rangle$ holds under the natural identification $\pi_1(\rS^1) \cong \ZZ$.
\end{proposition}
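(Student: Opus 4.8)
The plan is to reduce everything to the standard generators, as the phrase ``easily checked on generators'' suggests. First I would fix explicit models: write $\bT^n = \RR^n/\ZZ^n$ with quotient map $p \colon \RR^n \ra \bT^n$, and $\rS^1 = \RR/\ZZ$ with quotient $q \colon \RR \ra \rS^1$. Under the identification $\hat{\bT}^n \cong \ZZ^n$, an element $g = (a_1, \dots, a_n)$ corresponds to the character whose evaluation map is $\ev_g(p(x)) = q(\sum_i a_i x_i)$ for $x \in \RR^n$. Under the line-segment identification $\pi_1(\bT^n) \cong \ZZ^n$ recalled just before Lemma \ref{lem:finite-group-action-trivial}, the class $[h]$ corresponding to $b = (b_1, \dots, b_n) \in \ZZ^n$ is represented by the loop $\gamma_b(t) = p(tb)$ for $t \in [0,1]$.

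The key computation is then immediate: composing the two maps gives $\ev_g \circ \gamma_b(t) = q\bigl(\sum_i a_i (t b_i)\bigr) = q\bigl(t \sum_i a_i b_i\bigr)$, which is a loop in $\rS^1$ of winding number $\sum_i a_i b_i$. Under the winding-number identification $\pi_1(\rS^1) \cong \ZZ$ this yields $\pi_1(\ev_g)([h]) = \sum_i a_i b_i$, so the form in question is nothing but the standard dot product $\langle g, [h] \rangle = \sum_i a_i b_i$.

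To justify that this genuinely defines a bilinear form, I would observe two things. For fixed $g$, the map $\pi_1(\ev_g) \colon \pi_1(\bT^n) \ra \pi_1(\rS^1)$ is a group homomorphism $\ZZ^n \ra \ZZ$, hence $\ZZ$-linear in $[h]$; it therefore suffices to evaluate on the standard generators $b = e_j$, which produces $a_j$, and linearity recovers the formula above. For the dependence on $g$, since $\rS^1$ is a topological group with $\ev_{g+g'} = \ev_g \cdot \ev_{g'}$ pointwise, additivity of the winding number gives $\pi_1(\ev_{g+g'}) = \pi_1(\ev_g) + \pi_1(\ev_{g'})$, so the pairing is linear in $g$ as well. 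There is no genuine obstacle here; the only point requiring care is the bookkeeping of the two chosen identifications, namely the line-segment convention for $\pi_1(\bT^n)$ and the winding-number convention for $\pi_1(\rS^1)$, so that their orientations are compatible and the resulting form is exactly the standard dot product rather than its negative.
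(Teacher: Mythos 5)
Your proposal is correct and is essentially the same argument as the paper's: both reduce to the standard representative loop for $[h]$, compose it with $\ev_g$, and read off the winding number $\sum_j g_j h_j$ to identify the pairing with the dot product (your additive model $\RR^n/\ZZ^n$ versus the paper's exponential notation $e^{2\pi i t}$ is purely cosmetic). Your extra remarks verifying bilinearity in both variables go slightly beyond what the paper writes down, but add nothing essentially different.
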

\begin{proof}
  Note that both sides of the equation $\pi_1(\ev_g)([h]) = \langle g , [h] \rangle$ only depend on the homotopy class of $h$.  The identification $\pi_1( \bT^n) \cong \ZZ^n$ provides us with the standard loop $h(e^{2\pi i t}) = (e^{2 \pi i h_1 t}, \dotsc, e^{2 \pi i h_n t})$, for $(h_1, \dotsc, h_n) \in \ZZ^n$.  Further $g = (g_1, \dotsc, g_n) \in \ZZ^n$ defines the character $\ev_g(e^{2 \pi i t_1}, \dotsc, e^{2 \pi i t_n}) = e^{2 \pi i \sum_j g_j t_j}$.  It follows that $(\ev_g \circ h)(e^{2 \pi i t}) = e^{2 \pi i \sum_j h_j g_j}$ and hence $\pi_1(\ev_g)([h]) = [\ev_g \circ h] = \sum_j h_j g_j = \langle g, [h] \rangle$ under the natural identification $\pi_1(\rS^1) \cong \ZZ$.
\end{proof}

\subsection{$\cont(X)$-algebras}
\label{sec:contoX-algebras}

In this section we recall the notion of $\cont(X)$-algebras, which is for example explained in \cite[Appendix C]{williams07-crossed-products} in the more general context of $\conto(X)$-algebras.
\begin{definition}
  \label{def:contX-algebras}
  Let $A$ be a unital \Cstar-algebra and $X$ a compact space.
  \begin{itemize}
  \item $A$ is called a $\cont(X)$-algebra if there is a unital embedding $\cont(X) \hra \mathcal{Z}(A)$.
  \item If $A$ is a $\cont(X)$-algebra and $x \in X$, then the fibre of $A$ at $x$ is defined as the quotient of $A$ by the ideal generated by $\{f \in \cont(X) \mid f(x) = 0\}$.  It is denoted $A_x$.  The image of $a \in A$ in $A_x$ is denote by $a_x$.
  \item $A$ is called a continuous $\cont(X)$-algebra, if it is a $\cont(X)$-algebra and the map $x \mapsto \|a_x\|$ is continuous for all $a \in A$.
  \end{itemize}
\end{definition}
If $A$ is a continuous $\cont(X)$-algebra we will also say that $A$ is a \Cstar-bundle over $X$.  This terminology is justified by \cite[Theorem C.26]{williams07-crossed-products} in connection with \cite[Proposition C.10 (b)]{williams07-crossed-products}, which explains the relation of $\cont(X)$-algebras with Fell's \Cstar-bundles \cite{fell61}.

In \cite[Proposition 3.3]{blanchardgogic15}, it was claimed that a unital $\cont(X)$-algebra $A$ with faithful conditional expectation $\rE: A \ra \cont(X)$ is a continuous $\cont(X)$-algebra.  This is not true as stated, but needs the additional assumption that every state $\rE_x: A_x \ra \CC$ defined by $\rE_x(a) = \rE(a)(x)$ is faithful.  Adopting this extra assumption, the proof from \cite{blanchardgogic15} applies.
\begin{proposition}[\mbox{See \cite[Proposition 3.3]{blanchardgogic15}}]
  \label{prop:blanchargogic}
  Let $A$ be a unital $\cont(X)$-algebra, with a conditional expectation $\rE: A \ra \cont(X)$ such that each $\rE_x: A_x \ra \CC$, $x \in X$ is faithful.  Then $A$ is a continuous $\cont(X)$-algebra.
\end{proposition}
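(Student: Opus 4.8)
The goal is to show that $x \mapsto \|a_x\|$ is continuous for every $a \in A$. Upper semicontinuity of $x \mapsto \|a_x\|$ holds for every unital $\cont(X)$-algebra by the general theory of \Cstar-bundles, cf. \cite[Appendix C]{williams07-crossed-products}, so the whole content is to prove \emph{lower} semicontinuity, and this is exactly where the faithfulness of the fibrewise expectations enters. First I would record that every $\rE_x$ is a faithful state on the unital \Cstar-algebra $A_x$: it is positive and unital because $\rE$ is, and it is well-defined on the quotient $A_x$ because $\rE$ is $\cont(X)$-bimodular by Tomiyama's theorem, so that $\rE(f a')(x) = f(x) \rE(a')(x) = 0$ whenever $f(x) = 0$; faithfulness is the standing hypothesis.

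The analytic heart of the argument is the following moment formula: if $\phi$ is a faithful state on a unital \Cstar-algebra $B$ and $c \in B$ is positive, then $\phi(c^n)^{1/n} \to \|c\|$ as $n \to \infty$, each term being bounded by $\|c\|$. To prove it, let $\mu$ be the probability measure on the spectrum $\spec(c) \subset [0, \infty)$ representing $\phi$ on the commutative \Cstar-subalgebra generated by $c$ and $1$, so that $\int t^n \, d\mu(t) = \phi(c^n)$. Faithfulness of $\phi$ forces $\supp(\mu) = \spec(c)$, since a nonzero positive continuous function vanishing on $\supp(\mu)$ would otherwise give a nonzero positive element of $B$ annihilated by $\phi$. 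Hence $\phi(c^n)^{1/n}$ is the $L^n(\mu)$-norm of the identity function; as $\mu$ is a probability measure these norms increase to the $L^\infty(\mu)$-norm, which equals $\max \spec(c) = \|c\|$.

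Finally I would apply the moment formula with $B = A_x$, $\phi = \rE_x$ and $c = a_x^* a_x = (a^* a)_x$. Setting $g_n = \rE\bigl((a^* a)^n\bigr) \in \cont(X)$, which is continuous and satisfies $g_n(x) = \rE_x\bigl((a_x^* a_x)^n\bigr)$, the formula yields
\begin{equation*}
  \|a_x\|^2 = \|a_x^* a_x\| = \sup_{n \in \NN} g_n(x)^{1/n}
  \eqstop
\end{equation*}
Thus $x \mapsto \|a_x\|^2$ is a pointwise supremum of the continuous functions $x \mapsto g_n(x)^{1/n}$ and therefore lower semicontinuous; combined with the general upper semicontinuity it follows that $x \mapsto \|a_x\|^2$, and hence $x \mapsto \|a_x\|$, is continuous, so that $A$ is a continuous $\cont(X)$-algebra.

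The step I expect to be the main obstacle is the moment formula, and more precisely the point at which faithfulness is indispensable: for a merely positive state one only obtains $\max \supp(\mu) \leq \|c\|$, so the limit of $\phi(c^n)^{1/n}$ may be strictly smaller than $\|c\|$ and lower semicontinuity can fail. This is exactly why the faithfulness assumption on each $\rE_x$—missing from the original assertion in \cite{blanchardgogic15}—cannot be omitted.
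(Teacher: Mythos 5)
Your proof is correct, and it supplies in full the argument that the paper only invokes by citation: the paper's ``proof'' of this proposition consists of the single remark that, once each $\rE_x$ is assumed faithful, the argument of \cite[Proposition 3.3]{blanchardgogic15} goes through, and that argument is exactly the one you give --- automatic upper semicontinuity of $x \mapsto \|a_x\|$ for any unital $\cont(X)$-algebra, plus lower semicontinuity obtained by writing $\|a_x\|^2 = \sup_n \rE\bigl((a^*a)^n\bigr)(x)^{1/n}$ via the moment formula $\phi(c^n)^{1/n} \nearrow \|c\|$ for a faithful state $\phi$. Your closing observation, that faithfulness of the \emph{fibrewise} states (rather than of $\rE$ itself) is the indispensable hypothesis, is precisely the correction to \cite{blanchardgogic15} that the paper credits to Suzuki.
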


\begin{notation}
  \label{not:glimm}
  If $A$ is a unital \Cstar-algebra, we denote the spectrum of $\cZ(A)$ by $\Glimm(A)$ and call it the Glimm space of $A$.  Note that $A$ is a $\cont(\Glimm(A))$-algebra and it is a \Cstar-bundle over $\Glimm(A)$ if there is a conditional expectation from $A$ onto its centre.
\end{notation}

\subsection{Twisted group C*-algebras}
\label{sec:twisted}

In this section we recall the notion of twisted group \Cstar-algebras, which is essential in the study of group \Cstar-algebras associated with nilpotent groups.  The following theorem is a special case of \cite[Theorem 1.2]{packerraeburn89-structure}.  However, since its proof for discrete groups becomes less technical, we provide it here for the convenience of the reader.
\begin{theorem}[Special case of {\cite[Theorem 1.2]{packerraeburn89-structure}}]
  \label{thm:packer-raeburn}
  Let $G$ be an amenable discrete group.  Write $Z = \cZ(G)$, $H = G/Z$ and denote by $\sigma \in \rZ^2(H, Z)$ an extension cocycle.  For $\chi \in \hat Z$ write $\sigma_\chi = \chi \circ \sigma \in \rZ^2(H, \rS^1)$.  Then $\Cstar(G)$ is a continuous $\cont(\hat Z)$-algebra, whose fibre at $\chi \in \hat Z$ is isomorphic with $\Cstar(H, \sigma_{\chi})$.  More precisely, for $\chi \in \hat Z$ we have a commutative diagram
  \begin{equation*}
    \xymatrix{
      \Cstar(G) \ar[r] \ar[dr]  & \Cstar(H, \sigma_\chi) \ar[d] \\
      & \Cstar(G)_\chi \eqstop
    }
  \end{equation*}
\end{theorem}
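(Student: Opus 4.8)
The plan is to build the $\cont(\hat Z)$-algebra structure from the central copy of $\Cstar(Z)$, to identify each fibre with a twisted group \Cstar-algebra by hand via a set-theoretic section, and finally to extract continuity from the canonical conditional expectation using Proposition \ref{prop:blanchargogic}.

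First I would observe that since $Z = \cZ(G)$ is central, the unitaries $u_z$, $z \in Z$, lie in $\cZ(\Cstar(G))$, so the Gelfand isomorphism $\Cstar(Z) \cong \cont(\hat Z)$ (sending $u_z$ to the function $\chi \mapsto \chi(z)$) gives a unital embedding $\cont(\hat Z) \hra \cZ(\Cstar(G))$, making $\Cstar(G)$ a $\cont(\hat Z)$-algebra. Under this identification the maximal ideal $\{f \in \cont(\hat Z) \mid f(\chi) = 0\}$ is the closed ideal of $\Cstar(Z)$ generated by $\{u_z - \chi(z)\mathbb{1} \mid z \in Z\}$, since the functions $\chi' \mapsto \chi'(z)$ separate points of $\hat Z$. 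Hence the fibre $\Cstar(G)_\chi$ is $\Cstar(G)$ modulo the closed ideal generated by these elements; informally, it is $\Cstar(G)$ with the relation $u_z = \chi(z)\mathbb{1}$ imposed.

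Next I would fix a set-theoretic section $s \colon H \ra G$ with $s(e) = e$ realising $\sigma$, so that $s(h_1)s(h_2) = \sigma(h_1,h_2)\,s(h_1 h_2)$ with $\sigma(h_1,h_2) \in Z$, and write each $g \in G$ uniquely as $g = z_g\,s(h_g)$ with $h_g = gZ$. Putting $v_h = u_{s(h)}$ gives $v_{h_1} v_{h_2} = u_{\sigma(h_1,h_2)} v_{h_1 h_2}$. A direct computation using $\sigma_\chi = \chi \circ \sigma$ shows that $g \mapsto \chi(z_g)\,w_{h_g}$, where the $w_h$ are the canonical unitaries of $\Cstar(H, \sigma_\chi)$, is a unitary representation of $G$; by amenability of $G$ (so $\Cstar(G) = \Cstarmax(G)$) and the universal property of the full group \Cstar-algebra it extends to a surjective $*$-homomorphism $\pi_\chi \colon \Cstar(G) \ra \Cstar(H, \sigma_\chi)$. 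As $\pi_\chi(u_z) = \chi(z)\mathbb{1}$ for $z \in Z$, it kills the ideal above and factors as $\pi_\chi = \bar\pi_\chi \circ q_\chi$ through the quotient map $q_\chi \colon \Cstar(G) \ra \Cstar(G)_\chi$; this factorisation is exactly the asserted commutative diagram, with vertical arrow the isomorphism constructed below. Conversely, the images $\bar v_h$ of $v_h$ in $\Cstar(G)_\chi$ satisfy $\bar v_{h_1}\bar v_{h_2} = \sigma_\chi(h_1,h_2)\,\bar v_{h_1 h_2}$, so by the universal property of the full twisted group \Cstar-algebra together with amenability of $H = G/Z$ (so that it coincides with $\Cstar(H,\sigma_\chi)$) the assignment $w_h \mapsto \bar v_h$ defines a $*$-homomorphism $\psi_\chi \colon \Cstar(H,\sigma_\chi) \ra \Cstar(G)_\chi$. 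Since $\bar\pi_\chi(\bar v_h) = w_h$ and $\psi_\chi(w_h) = \bar v_h$, the maps $\bar\pi_\chi$ and $\psi_\chi$ are mutually inverse on generators, hence inverse isomorphisms, giving $\Cstar(G)_\chi \cong \Cstar(H, \sigma_\chi)$.

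It remains to establish continuity, which is the crux. I would invoke the natural faithful conditional expectation $\rE \colon \Cstar(G) \ra \Cstar(Z) \cong \cont(\hat Z)$ determined by $\rE(u_g) = \mathbb{1}_Z(g) u_g$, whose range is precisely the central copy $\cont(\hat Z)$. By Proposition \ref{prop:blanchargogic} it then suffices to check that each induced state $\rE_\chi(a) = \rE(a)(\chi)$ on $\Cstar(G)_\chi$ is faithful. Tracing through the fibre identification, $\rE_\chi$ corresponds under $\bar\pi_\chi$ to the canonical trace $\tau$ on $\Cstar(H, \sigma_\chi)$ with $\tau(w_h) = \delta_{h,e}$; indeed $\rE(u_g) = u_g$ for $g \in Z$ and $0$ otherwise, whence $\rE_\chi(\overline{u_g}) = \chi(z_g)\delta_{h_g,e}$. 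Faithfulness of $\tau$ is the key input: for the amenable group $H$ the full and reduced twisted group \Cstar-algebras coincide, and the canonical trace on the reduced twisted group \Cstar-algebra is faithful, as its GNS representation is the regular $\sigma_\chi$-representation. Thus each $\rE_\chi$ is faithful and Proposition \ref{prop:blanchargogic} yields that $\Cstar(G)$ is a continuous $\cont(\hat Z)$-algebra. The main obstacle is precisely this last step: the algebraic identification of the fibres is routine bookkeeping with the section and cocycle, whereas continuity forces one to feed fibrewise faithfulness of $\rE$ into Proposition \ref{prop:blanchargogic}, and it is exactly here that amenability is indispensable.
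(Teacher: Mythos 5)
Your proof is correct and follows essentially the same route as the paper: the central copy of $\Cstar(Z) \cong \cont(\hat Z)$ gives the $\cont(\hat Z)$-structure, the fibres are identified with $\Cstar(H,\sigma_\chi)$ via universal properties, and continuity comes from feeding the fibrewise faithfulness of the canonical conditional expectation $\rE\colon \Cstar(G) \ra \Cstar(Z)$ into Proposition \ref{prop:blanchargogic}. The only difference is cosmetic: where the paper presents $\Cstar(G)$ as the universal \Cstar-algebra generated by a central copy of $\Cstar(Z)$ and unitaries satisfying the cocycle relations and obtains the fibre by imposing $u_z = \chi(z)$, you build the isomorphism by hand from a section, constructing the two mutually inverse homomorphisms explicitly.
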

\begin{proof}
  Since $\Cstar(Z) \subset \cZ(\Cstar(G))$ and $\Cstar(Z) \cong \cont(\hat Z)$ via the Fourier transform, the \Cstar-algebra $\Cstar(G)$ is a $\cont(\hat Z)$-algebra.

  Recall that $\sigma \in \rZ^2(H, Z)$ denotes an extension cocycle.  This means that $G \cong Z \rtimes_\sigma H$ is the universal group containing a central copy of $Z$ and elements $\tilde h$ for $h \in H$ subject to the relation $\tilde h_1 \tilde h_2 = \sigma(h_1, h_2) \widetilde{h_1h_2}$ for all $h_1, h_2 \in H$.  It follows that $\Cstar(G)$ is the universal \Cstar-algebra generated by a central copy of $\Cstar(Z)$ and elements $u_h, h \in H$ such that $u_{h_1} u_{h_2} = u_{\sigma(h_1, h_2)} u_{h_1h_2}$ for all $h_1, h_2 \in H$.  The fibre of $\Cstar(G)$ at the element $\chi \in \hat Z$ is the quotient of $\Cstar(G)$ by the relations $u_z = \chi(z)$ for $z \in Z$.  It is hence isomorphic with the universal \Cstar-algebra generated by elements $v_h$ for $h \in H$ that satisfy $v_{h_1} v_{h_2} = \sigma_\chi(h_1, h_2) v_{h_1h_2}$ for all $h_1, h_2 \in H$.  This is precisely the twisted group \Cstar-algebra $\Cstar(H, \sigma_\chi)$.

  Since $G$ is amenable, its maximal and reduced group \Cstar-algebras agree so that the natural conditional expectation of reduced group \Cstar-algebras provides a faithful conditional expectation $\rE: \Cstar(G) \ra \Cstar(Z)$.  Since $\rE_\chi$ identifies with the natural trace $\Cstar(H, \sigma_\chi) \ra \CC$, it is faithful, so that Proposition \ref{prop:blanchargogic} implies that $\Cstar(G)$ is a continuous $\cont(\hat Z)$-algebra.  
\end{proof}

\noindent The next proposition is possibly known to experts and it was already used in \cite{kundbyraumthielwhite17}.  It provides a simple algebraic mean to detect which twisted group \Cstar-algebras are untwisted.
\begin{proposition}
  \label{prop:one-dimensional-projective-representation}
  Let $G$ be a discrete group and $\sigma \in \rZ^2(G, \rS^1)$.  If there is a one-dimensional $\sigma$-projective representation of $G$, then $\sigma$ is inner.  So the following statements are equivalent.
  \begin{itemize}
  \item $\sigma$ is inner.
  \item $\Cstar(G, \sigma)$ admits a character, that is a multiplicative positive functional.
  \item $\Cstar(G, \sigma) \cong \Cstar(G)$.
  \end{itemize}
\end{proposition}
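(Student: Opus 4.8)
The plan is to reduce everything to the tautological observation that a one-dimensional $\sigma$-projective representation of $G$ is precisely a $1$-cochain whose coboundary is $\sigma$. Concretely, such a representation is a map $\pi: G \ra \rS^1$ satisfying $\pi(g)\pi(h) = \sigma(g,h)\pi(gh)$ for all $g, h \in G$. Rearranging gives $\sigma(g,h) = \pi(g)\pi(h)\pi(gh)^{-1}$, which says exactly that $\sigma$ is the coboundary of $\pi$ and is therefore inner. This settles the first assertion of the proposition, and I would obtain the three-fold equivalence through a short cycle of implications built on this fact.

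For the cycle, I would first observe that a character $\phi$ of $\Cstar(G, \sigma)$ produces such a representation. Since each canonical generator $u_g$ is unitary, applying $\phi$ gives $|\phi(u_g)| = 1$, so $\phi(u_g) \in \rS^1$, and the defining relation $u_g u_h = \sigma(g,h) u_{gh}$ forces $g \mapsto \phi(u_g)$ to be a one-dimensional $\sigma$-projective representation. Hence the existence of a character implies that $\sigma$ is inner, by the first part.

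Next, assuming $\sigma$ is inner, say $\sigma = d\beta$ for a normalised $\beta: G \ra \rS^1$, I would untwist explicitly: the assignment $u_g \mapsto \beta(g) u_g$ carries the $\sigma$-twisted multiplication to the untwisted one in $\Cstar(G)$, and, being invertible on generators, extends by the universal property to an isomorphism $\Cstar(G, \sigma) \cong \Cstar(G)$. Finally, $\Cstar(G)$ carries the trivial character $u_g \mapsto 1$ arising from the trivial representation of $G$, so transporting it through this isomorphism yields a character of $\Cstar(G, \sigma)$. This closes the cycle: (character) $\Rightarrow$ ($\sigma$ inner) $\Rightarrow$ ($\Cstar(G, \sigma) \cong \Cstar(G)$) $\Rightarrow$ (character).

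I do not expect a serious obstacle here, since the argument is essentially formal; the only point requiring genuine care is bookkeeping of the cocycle convention, so that the untwisting factor $\beta(g)$ sits on the correct side and the coboundary identity $\sigma(g,h) = \beta(g)\beta(h)\beta(gh)^{-1}$ is the one matching the chosen multiplication $u_g u_h = \sigma(g,h) u_{gh}$. One should also note that the trivial representation always supplies a character of the maximal group \Cstar-algebra, which for the amenable (indeed nilpotent) groups relevant to this article coincides with the reduced one, so the final implication is unconditional in the intended setting.
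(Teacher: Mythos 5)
Your proof is correct and follows essentially the same route as the paper: the explicit coboundary identity $\sigma(g,h) = \pi(g)\pi(h)\overline{\pi(gh)}$ for a one-dimensional $\sigma$-projective representation, the correspondence between characters of $\Cstar(G,\sigma)$ and such representations, and the cycle (character) $\Rightarrow$ (inner) $\Rightarrow$ (isomorphism) $\Rightarrow$ (character). Your added care about cocycle conventions and about the trivial character living on the \emph{maximal} group \Cstar-algebra only makes explicit what the paper's terser argument leaves implicit.
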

\begin{proof}
  Let $\pi: G \ra \cU(1) = \rS^1$ be a one-dimensional $\sigma$-projective representation of $G$.  For all $g,h \in G$ we have $\pi(g)\pi(h) = \sigma(g,h) \pi(gh)$ and hence $\sigma(g,h) = \pi(g)\pi(h) \ol{\pi(gh)}$.  This proves that $\sigma$ is inner.    Since characters of $\Cstar(G, \sigma)$ are in 1-1 correspondence with one-dimensional $\sigma$-projective representations of $G$, we infer that $\Cstar(G, \sigma)$ has a character if and only only if $\sigma$ is inner.  The latter implies that $\Cstar(G, \sigma) \cong \Cstar(G)$, which in turn provides a character of $\Cstar(G, \sigma)$.
\end{proof}

\subsection{Group C*-algebras of torsion-free groups}
\label{sec:cstar-torsion-free-groups}

In this section we recall a characterisation of group C*-algebras of torsion-free nilpotent groups.  In fact, the following theorem holds for a much larger class of groups by Higson and Kasparov's \cite{higsonkasparov01} (see \cite[Chapter 6.3]{valette02-baum-connes} for detailed explanations).  Already in \cite[Proposition 2.5]{rosenberg83} and \cite[Corollary, p. 96]{kaniuthtaylor89} results appeared that apply to show that group \Cstar-algebras of torsion-free nilpotent groups admit no non-trivial projections.  An elementary proof, whose idea was communicated to us by Alain Valette, was provided in \cite[Theorem 8]{jipedersen90}.  For the convenience of the reader we provide the short argument here.
\begin{theorem}
  \label{thm:chacterising-tf}
  Let $G$ be a discrete group such that $\Cstar(G)$ has no non-trivial projections.  Then $G$ is torsion-free.  Conversely, if $G$ is torsion-free and nilpotent then $\Cstar(G)$ has no non-trivial projections.
\end{theorem}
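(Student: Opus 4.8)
The statement splits into two implications, which I would prove separately. For the contrapositive of the first one I would assume $G$ contains an element $g$ of finite order $n > 1$ and exhibit a non-trivial projection. The averaged element $p = \frac{1}{n}\sum_{k=0}^{n-1} u_{g^k}$ lies in $\Cstar(\langle g\rangle) \subseteq \Cstar(G)$, and a direct computation gives $p^* = p$ (since $u_{g^k}^* = u_{g^{n-k}}$) and $p^2 = p$ (each residue mod $n$ is hit $n$ times in the double sum), so $p$ is a projection. To see that it is non-trivial I would apply the canonical trace $\tau(\sum_h c_h u_h) = c_e$, which is faithful on $\Cstar(G) = \Cstarred(G)$ for amenable $G$: as $g$ has order $n$ one finds $\tau(p) = \tfrac{1}{n} \in (0,1)$, while $\tau(0) = 0$ and $\tau(1) = 1$, so $p \notin \{0,1\}$.

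For the converse I would first reduce to the finitely generated case: a projection $p \in \Cstar(G)$ is a norm-limit of elements of $\CC G$, hence lies in $\Cstar(G_0)$ for some finitely generated $G_0 \le G$ (apply continuous functional calculus to a self-adjoint element of $\CC G_0$ close to $p$), and $G_0$ is again torsion-free nilpotent with the same unit. Using that $\tau$ is faithful and that $0 \le \tau(p) \le \tau(1) = 1$ for every projection, the whole statement reduces to the integrality assertion that $\tau$ takes only integer values on projections, equivalently $\tau_*(K_0(\Cstar G)) \subseteq \ZZ$: indeed $\tau(p) \in \{0,1\}$ together with faithfulness forces $p = 0$ or $1 - p = 0$.

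I would prove integrality by induction on the Hirsch length, the base case $G \cong \ZZ^n$ being clear since $\Cstar(\ZZ^n) \cong \cont(\bT^n)$ is commutative with connected spectrum. For the inductive step I would choose a primitive central element $z \in \cZ(G)$ of infinite order with $Q = G/\langle z\rangle$ again torsion-free nilpotent, of strictly smaller Hirsch length. Applying Theorem \ref{thm:packer-raeburn} to the central subgroup $\langle z\rangle \cong \ZZ$ presents $\Cstar(G)$ as a continuous $\cont(\bT)$-algebra over $\bT = \widehat{\langle z\rangle}$ whose fibre at $\chi$ is the twisted group algebra $\Cstar(Q, \sigma_\chi)$, the fibre at the trivial character being the untwisted $\Cstar(Q)$, which by the inductive hypothesis has no non-trivial projections. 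For a projection $p \in \Cstar(G)$ the function $f(\chi) = \rE(p)(\chi) = \tau_\chi(p_\chi)$ is continuous, being the image of $p$ under the canonical conditional expectation $\rE \colon \Cstar(G) \to \Cstar(\langle z\rangle) \cong \cont(\bT)$, and the canonical trace satisfies $\tau(p) = \int_{\bT} f(\chi)\, \mathrm{d}\chi$.

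The \emph{main obstacle} is to show that $f$ is constant, equal to its value $f(1) = \tau_1(p_1) \in \{0,1\}$ at the trivial character; this yields $\tau(p) \in \{0,1\}$ and closes the induction. Continuity of $f$ and connectedness of $\bT$ alone do not suffice, because the individual fibres $\Cstar(Q,\sigma_\chi)$ — already noncommutative tori when $Q = \ZZ^2$ — carry projections of non-integer trace, and the real content is that a global projection cannot realise these fibrewise traces consistently around the loop. This rigidity is exactly where torsion-free nilpotency is essential and genuinely stronger than the poly-$\ZZ$ property, for which the trace range can enlarge. I would establish it either through the trace conjecture for the amenable, hence Baum--Connes, group $G$ following Higson--Kasparov \cite{higsonkasparov01}, which gives $\tau_*(K_0(\Cstar G)) \subseteq \ZZ$ directly, or by tracking the Pimsner--Voiculescu boundary pairing across the continuous field and checking that the unipotent structure inherited from nilpotency contributes no irrational mean-rotation term.
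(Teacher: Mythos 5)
Your first half is fine and matches the paper (the same averaging projection; note that faithfulness of the trace and amenability are not needed there, since evaluating the canonical trace, or the trivial representation, already distinguishes $p$ from $0$ and $1$ for an arbitrary discrete group). For the converse, your scaffolding -- Packer--Raeburn bundle over the dual of a central subgroup plus an induction -- is exactly the paper's, but the key step has a genuine gap. You track the fibrewise \emph{trace} $f(\chi)=\tau_\chi(p_\chi)$, correctly observe that continuity of $f$ and connectedness of the base do not force $f$ to be constant (the fibres are noncommutative tori, whose projections realise non-integer trace values), and then defer the resolution to Higson--Kasparov or to an unspecified Pimsner--Voiculescu argument. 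The elementary idea you are missing -- and it is the entire content of the paper's proof -- is to track the fibrewise \emph{norm} instead. Each $p_\chi$ is a projection in the fibre, so $\|p_\chi\|\in\{0,1\}$; the map $\chi\mapsto\|p_\chi\|$ is continuous because $\Cstar(G)$ is a \emph{continuous} $\cont(\hat Z)$-algebra (Theorem \ref{thm:packer-raeburn}, via Proposition \ref{prop:blanchargogic}); and the induction hypothesis is needed only at the single fibre over the trivial character $\veps$, where the twist vanishes: $p_\veps\in\{0,1\}$, so after replacing $p$ by $1-p$ we have $\|p_\veps\|=0$, hence $\|p_\chi\|=0$ for every $\chi$ by connectedness of $\hat Z$ (torsion-freeness of the central subgroup), and $p=0$ since $\|p\|=\sup_\chi\|p_\chi\|$. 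It is irrelevant that the other fibres are full of non-trivial projections. With this observation your induction closes with no K-theory, no trace pairing and no integrality statement -- whether run on Hirsch length over a central $\ZZ$ as you propose, or, as in the paper, on nilpotency class over the full centre $Z=\cZ(G)$, using Lemma \ref{lem:torsion-free-quotients} to see that $G/Z$ is again torsion-free; even your preliminary reduction to finitely generated subgroups becomes unnecessary.

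Two further remarks. First, your fallback via Higson--Kasparov is a valid proof, but it is a citation of a far deeper theorem that renders your entire inductive set-up redundant: Baum--Connes for the a-T-menable group $G$, together with its standard consequence for torsion-free groups, yields projectionlessness outright -- the paper states exactly this before giving the elementary argument, which is the whole point of the proof it presents. Your second alternative, ``tracking the Pimsner--Voiculescu boundary pairing across the continuous field,'' is too vague to count as an argument. Second, your claim that nilpotency is ``essential and genuinely stronger than the poly-$\ZZ$ property, for which the trace range can enlarge'' is false: torsion-free polycyclic groups are amenable, hence a-T-menable, so the same machinery gives trace integrality and absence of non-trivial projections for them as well. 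What nilpotency actually buys is only the elementary argument, namely a non-trivial \emph{central} subgroup over which to fibre $\Cstar(G)$.
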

\begin{proof} The first statement is clear, since for any group $G$ with a non-trivial torsion element $g \in G$, the formula $\frac{1}{\mathrm{ord(g)}} \sum_{n = 1}^{\mathrm{ord}(g)} u_g^n \in \CC G \subset \Cstar(G)$ defines a non-trivial projection.
 
 Let now $G$ be a torsion-free nilpotent group of nilpotency class $c$.  If $c = 0$, then $G$ is trivial and the conclusion is clear.  Assume that $c \geq 1$ and the statement is proven for $c - 1$.  Write $Z = \cZ(G)$ and let $\sigma \in \rZ^2(G/Z , Z)$ be an extension cocycle for $Z \hra G \thra G/Z$.  Theorem \ref{thm:packer-raeburn} says that $\Cstar(G)$ is a \Cstar-bundle over $\hat Z$ with fibre $\Cstar(G/Z, \chi \circ \sigma)$ at $\chi \in \hat Z$.  Let $p \in \Cstar(G)$ be a projection.  Denote by $p_\chi$ the image of $p$ in the fibre at $\chi$.  Denoting by $\veps$ the trivial character on $Z$, we find that $p_\veps \in \{0,1\}$ by the induction hypothesis.  Replacing $p$ by $1 - p$ if necessary, we may assume that $p_\veps = 0$.  Since $Z$ is torsion-free, $\hat Z$ is connected so that $\chi \mapsto \|p_\chi\|$ is a $\{0,1\}$-valued function on a connected space, which takes the value $0$.  Hence it is constantly $0$ and it follows that $p = 0$.  This finishes the proof of the theorem.
\end{proof}

\subsection{The primitive ideal space of nilpotent groups}
\label{sec:primitive-ideal-space}

In this section we recall results of Moore-Rosenberg \cite{moorerosenberg75} on the primitive ideal space of nilpotent groups.

\begin{definition}
  \label{def:primitive-ideal-space}
  Let $A$ a be a \Cstar-algebra.  The set of primitive ideals of $A$ is
  \begin{equation*}
    \Prim(A) = \{\ker \pi \mid \pi \text{ non-zero irreducible *-representation of } A\}
    \eqstop
  \end{equation*}
  This set is endowed with the unique topology whose closure operation satisfies
  \begin{equation*}
    \ol{X} = \{I \in \Prim(A) \mid I \supset \bigcap_{J \in X} J\}
    \qquad
    \text{for all } X \subset \Prim(A)
    \eqstop
  \end{equation*}
\end{definition}
It follows from the definition that a primitive ideal $I \unlhd A$ is a closed point in $\Prim(A)$ if and only if it is a maximal ideal.  Recall in this context that a topological space is called $\Tone$ if all its points are closed. Motivated by analogies with Lie groups, the following theorem is stated by Moore-Rosenberg only for finitely generated solvable groups $G$.  Its proof however, is valid for arbitrary amenable groups as the authors mention \cite[p.220]{moorerosenberg75}.

\begin{theorem}[\mbox{Moore-Rosenberg \cite[Theorem 5]{moorerosenberg75}}]
  \label{thm:tone-iff-virt-nilpotent}
  Let $G$ be a finitely generated group.
  \begin{itemize}
  \item If $G$ is virtually nilpotent, then $\Prim(G)$ is a \Tone-space.
  \item If $G$ is amenable and $\Prim(G)$ is a \Tone-space, then $G$ is virtually nilpotent.
  \end{itemize}
\end{theorem}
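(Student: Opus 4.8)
The plan is to treat the two implications separately, and in both to use the central \Cstar-bundle decomposition of Theorem \ref{thm:packer-raeburn} as the structural backbone. For the forward implication I would show that every primitive ideal of a finitely generated virtually nilpotent group is maximal, which by the remark following Definition \ref{def:primitive-ideal-space} is exactly the \Tone{} property. First I would reduce to the nilpotent case: if $N \unlhd G$ is normal, nilpotent and of finite index, then by the Mackey machine for the finite quotient $G/N$ every irreducible representation of $G$ lies over a finite $G/N$-orbit in $\Prim(N)$ and is determined by finitely much additional data; since a quotient of a \Tone{} space by a finite group stays \Tone{} and the resulting fibres are finite, $\Prim(G)$ is \Tone{} as soon as $\Prim(N)$ is.

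For a nilpotent group $N$ I would then induct on the nilpotency class $c$. The base case $c \leq 1$ is abelian, where $\Cstar(N) \cong \cont(\hat N)$ is commutative and $\Prim(N) \cong \hat N$ is Hausdorff. For the inductive step I would put $Z = \cZ(N)$ and $H = N/Z$, and use Theorem \ref{thm:packer-raeburn} to realise $\Cstar(N)$ as a continuous \Cstar-bundle over the Hausdorff space $\hat Z$ with fibre $\Cstar(H, \sigma_\chi)$ at $\chi$. Every irreducible representation sends $Z$ to scalars by Schur's lemma, hence factors through a unique fibre, so $\Prim(N)$ fibres over $\hat Z$ with the fibre over $\chi$ equal to $\Prim(\Cstar(H,\sigma_\chi))$, sitting inside as the closed subset cut out by $\{f \in \cont(\hat Z) \mid f(\chi) = 0\}$. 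Since $\{\chi\}$ is closed in $\hat Z$ and the fibre is a closed subspace, a point of $\Prim(N)$ is closed as soon as it is closed in its own fibre. This reduces matters to a twisted analogue: that $\Prim(\Cstar(H, \sigma))$ is \Tone{} for $H$ finitely generated nilpotent of class $c-1$ and an arbitrary cocycle $\sigma$. Here I would run a parallel induction, but now the twisting intervenes: in $\Cstar(H,\sigma)$ the centre of $H$ is no longer central, and only the symmetry subgroup $S = \{z \in \cZ(H) \mid \sigma(z,h) = \sigma(h,z) \ \forall h \in H\}$ contributes a central copy of $\cont(\hat S)$, over which I decompose again. The essential input is that once all such central parameters are exhausted, the remaining twist on $\cZ(H)/S$ is centrally non-degenerate in the sense of Notation \ref{not:centrally-non-degenerate}, and one shows the corresponding twisted algebra has a unique primitive ideal — the higher-rank mirror of the simplicity of an irrational noncommutative torus that drives the rest of this paper. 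The deepest fibres are then single closed points, and the \Tone{} property propagates back up the tower of bundles.

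The converse implication is the genuinely hard part, and I do not expect the tools assembled so far to suffice. I would argue by contraposition: assuming $G$ is finitely generated, amenable and not virtually nilpotent, I would try to exhibit a strict inclusion $\ker \pi \subsetneq \ker \rho$ of primitive ideals, witnessing that $\Prim(G)$ fails to be \Tone{}. The natural route is to locate a solvable quotient or subquotient that is not virtually nilpotent — a non-polycyclic metabelian group or a lamplighter-type group being the model examples — and to produce the non-maximal primitive ideal explicitly there, transporting it back along the closed embedding of primitive ideal spaces induced by a group quotient.

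The main obstacle is precisely this last implication. Without invoking Gromov's polynomial-growth theorem (which postdates Moore--Rosenberg), one must extract from the purely qualitative hypothesis ``amenable and not virtually nilpotent'' a concrete representation-theoretic pathology, and, crucially, control the fibrewise behaviour of $\Prim$ over the centre in a regime where the non-degeneracy/simplicity dichotomy exploited above breaks down. Isolating and transporting the failure of maximality is the delicate heart of the argument, and it is here that I would expect to lean on the structure theory of amenable groups rather than on the \Cstar-bundle formalism alone.
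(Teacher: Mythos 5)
First, a point of order: the paper does not prove this theorem at all. It is quoted from Moore--Rosenberg \cite[Theorem 5]{moorerosenberg75}, and the paper's only original contribution here is the remark that Moore--Rosenberg state the result for finitely generated solvable groups while observing (p.~220 of their article) that their proof is valid for amenable groups. So your sketch cannot be compared with ``the paper's own proof''; it can only be measured against what a complete argument requires, and by that measure it has two genuine gaps. In the forward direction your bundle skeleton is sound: every irreducible representation sends the central copy of $\cont(\hat Z)$ to scalars, so every primitive ideal contains the ideal of a unique fibre, the fibre spectra are closed subsets of $\Prim(N)$, and a point closed in a closed subset is closed. But the step carrying all the weight is asserted with ``one shows'' rather than proved: that, once the central parameters are exhausted, the remaining centrally non-degenerate twist yields a twisted group \Cstar-algebra with a unique primitive ideal. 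For abelian groups this is the Kleppner--Slawny simplicity criterion; for nilpotent groups of higher class it is a substantive theorem (due to Packer, extended by B\'edos and Omland to FC-hypercentral groups), which appears nowhere in this paper and does not follow from the noncommutative-torus material of Section \ref{sec:elliott}. The Mackey-machine reduction from virtually nilpotent to nilpotent likewise needs a real argument, not a sentence.

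The converse is where the proposal actually fails, as you yourself concede, and the contraposition you outline is not repairable as stated, for two concrete reasons. First, your transport mechanism does not exist: a closed embedding of primitive ideal spaces is induced only by group \emph{quotients} of $G$, not by subgroups or subquotients, so a non-maximal primitive ideal located in a subquotient cannot in general be pushed into $\Prim(G)$. Second, the required subquotient need not exist: a finitely generated amenable group that is not virtually nilpotent can be a just-infinite torsion group --- Grigorchuk's group of intermediate growth is the standard example --- whose proper quotients are all finite and which contains no lamplighter-type or non-polycyclic metabelian subquotient; for such a group your strategy has nothing to grab. A mechanism that does work runs through the FC-hypercentre: if $G$ is not FC-hypercentral, it has a non-trivial amenable ICC quotient $Q$; then $\Cstar(Q) = \Cstarred(Q)$ is prime (because $\rL(Q)$ is a factor, so any two non-zero ideals have weakly dense closures and hence non-zero product) and separable, hence primitive, so $0$ is a primitive ideal strictly contained in the primitive augmentation ideal, and $\Prim(Q)$ --- a closed subspace of $\Prim(G)$ --- is not \Tone. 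One then finishes with the group-theoretic fact, in the spirit of the Duguid--McLain theorem already cited in the paper, that a finitely generated FC-hypercentral group is virtually nilpotent. Some device of this kind, converting ``not virtually nilpotent'' into an ICC quotient on which operator-algebraic primeness can act, is precisely the missing idea; growth considerations and solvable subquotients cannot substitute for it.
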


\subsection{Elliott's work on noncommutative tori}
\label{sec:elliott}

In the article \cite{elliott84}, Elliott undertook a detailed study of the K-theory of noncommutative tori, in particular calculating the trace paring with the $\rK_0$-group.  In view of Theorem \ref{thm:packer-raeburn}, these results will be essential to our investigations of group \Cstar-algebras of torsion-free finitely generated 2-step nilpotent groups.  In this section we recall the results of Elliott that will be used and we fix our notation.

\begin{proposition}[\mbox{See \cite[Lemma 2.3]{elliott84}}]
  \label{prop:elliott-unique-pairing}
  Let $A$ be a torsion-free abelian group and $\omega \in \Hom(A \wedge A, \rS^1)$.  There is a unique map $\rK_0(\tau): \rK_0(\Cstar(A, \omega)) \ra \RR$ induced by tracial states on $\Cstar(A, \omega)$.
\end{proposition}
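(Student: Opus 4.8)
The plan is to first dispose of existence by exhibiting a concrete tracial state, and then to prove the asserted uniqueness by a two-stage reduction: from an arbitrary torsion-free $A$ to the finitely generated case, and from there to an inductive computation along the Pimsner--Voiculescu exact sequence. For existence, recall that $\Cstar(A, \omega)$ is generated by unitaries $u_a$, $a \in A$, subject to $u_a u_b = \omega(a,b)\,u_{ab}$, and that the canonical functional $\tau_0(\sum_a c_a u_a) = c_e$ extends to a state. It is tracial: since $A$ is abelian, $\tau_0(u_a u_b)$ and $\tau_0(u_b u_a)$ are both supported on $ab = e$, that is $b = a^{-1}$, and there $\omega(a, a^{-1}) = \omega(a^{-1}, a)$ because $a \wedge a^{-1} = 0$ in $A \wedge A$ forces the skew-symmetrisation of Theorem \ref{thm:cocycles-skew-symmetric-forms} to be trivial. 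So $\rK_0(\tau_0)$ exists, and it remains to show that any two tracial states induce the same map on $\rK_0$.

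For the reduction, I would write $A = \bigcup_i A_i$ as a directed union of its finitely generated subgroups; each $A_i$ is torsion-free finitely generated abelian, hence free, so $A_i \cong \ZZ^{n_i}$. The restrictions $\omega_i = \omega|_{A_i \wedge A_i}$ yield inclusions $\Cstar(A_i, \omega_i) \hookrightarrow \Cstar(A, \omega)$ realising $\Cstar(A, \omega) = \varinjlim_i \Cstar(A_i, \omega_i)$, and by continuity of $\rK_0$ under inductive limits every class $x \in \rK_0(\Cstar(A, \omega))$ is the image of some $x_i \in \rK_0(\Cstar(A_i, \omega_i))$. Any tracial state $\tau$ restricts to a tracial state on each $\Cstar(A_i, \omega_i)$, and naturality of the trace pairing gives $\rK_0(\tau)(x) = \rK_0(\tau|_{A_i})(x_i)$. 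Hence it suffices to prove uniqueness when $A \cong \ZZ^n$.

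For $A \cong \ZZ^n$ I would argue by induction on $n$, the cases $n = 0, 1$ being clear, since $\CC$ and $\cont(\bT)$ have the property that the trace of a projection is its (integer) rank. Splitting off the last generator $t$ and letting $\alpha = \Ad(u_t)$ be the rotation automorphism $u_g \mapsto \omega(t,g)\omega(g,t)^{-1} u_g$ of $B := \Cstar(\ZZ^{n-1}, \omega')$, one has $\Cstar(\ZZ^n, \omega) \cong B \rtimes_\alpha \ZZ$. A tracial state $\tau$ restricts to an $\alpha$-invariant tracial state $\tau'$ on $B$, for which $\rK_0(\tau')$ is unique by the inductive hypothesis. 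The Pimsner--Voiculescu sequence expresses $\rK_0(B \rtimes_\alpha \ZZ)$ through $\coker(1 - \alpha_*)$ acting on $\rK_0(B)$ together with $\ker(1 - \alpha_*)$ inside $\rK_1(B)$; on the first summand the trace is read off directly from $\rK_0(\tau')$ and is therefore independent of $\tau$.

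The main obstacle is the second summand: the ``new'' classes arising from $\rK_1(B)$ via the index map, whose trace is precisely the source of the irrational values (the parameter $\theta$ for rotation algebras). Controlling it requires the de la Harpe--Skandalis/Connes formula, which computes $\rK_0(\tau)$ on a class $\partial^{-1}[w]$ as a winding-number pairing of $\tau'$ with the unitary $w \in \rK_1(B)$ against the derivation generating $\alpha$; this quantity is again determined by $\rK_0(\tau')$ and the fixed twisting data, hence independent of the chosen trace. I note that one might instead hope to run a softer argument via Theorem \ref{thm:packer-raeburn}, presenting $\Cstar(\ZZ^n, \omega)$ as a bundle over the connected dual of the radical of $\omega$ and invoking connectedness; this fails because the fibres degenerate at characters of finite order and are not uniformly simple, so the fibrewise trace cannot be shown locally constant by range considerations alone. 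Assembling the two $\rK_0$-computations above is exactly the content of Elliott's Lemma 2.3, which one may alternatively cite directly once the reduction to $\ZZ^n$ is in place.
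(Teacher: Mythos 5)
Your existence argument and the reduction to finitely generated subgroups are both fine (and note, for calibration, that the paper offers no proof of this proposition at all: it is stated as a recalled result with a citation to \cite[Lemma 2.3]{elliott84}, so the relevant comparison is with Elliott's own argument). The genuine gap is in your inductive step, precisely at the point you yourself flag as ``the main obstacle''. The de la Harpe--Skandalis/Connes winding-number formula computes the pairing on classes $x$ with $\partial x = [w]$ for the \emph{dual} trace $\tau' \circ \rE$, where $\rE: B \rtimes_\alpha \ZZ \ra B$ is the canonical expectation: its derivation/winding-number form rests on the fact that the dual trace annihilates all Fourier modes $b u^n$, $n \neq 0$, of the implementing unitary. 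An arbitrary tracial state $\tau$ on $B \rtimes_\alpha \ZZ$ restricting to $\tau'$ need not annihilate these modes, and the formula simply does not apply to it; bridging this, i.e.\ proving $\rK_0(\tau) = \rK_0(\tau|_B \circ \rE)$, is not a technicality but essentially the statement being proved, one crossed-product stage at a time. Two further imprecisions compound this. First, the rotation-number term is only well defined modulo the countable subgroup $\rK_0(\tau')(\rK_0(B)) \subset \RR$, and agreement of two real numbers modulo a countable subgroup does not give equality; one must fix a concrete preimage of $[w]$ and a concrete path. Second, your claim that this term ``is determined by $\rK_0(\tau')$ and the fixed twisting data'' is exactly what needs proof: the determinant $\frac{1}{2\pi i}\int_0^1 \tau'(\dot u(t) u(t)^*)\, dt$ is a priori sensitive to the functional $\tau'$ beyond its $\rK_0$-pairing, and while it can be computed trace-independently for paths with $\dot u u^*$ scalar (which handles the classes $[u_g]$), no such representatives are exhibited for the classes of $\rK_1(B)$ coming from higher exterior powers.

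The missing idea, which repairs everything and makes the Pimsner--Voiculescu apparatus unnecessary, is gauge averaging. Since $A$ is torsion-free, $\hat A$ is compact and connected, and it acts on $\Cstar(A, \omega)$ by $\gamma_\chi(u_a) = \chi(a) u_a$. For a projection $p$ over $\Cstar(A,\omega)$ the orbit map $\chi \mapsto \gamma_\chi(p)$ is norm continuous, so $\chi \mapsto [\gamma_\chi(p)] \in \rK_0$ is locally constant, hence constant by connectedness; traciality of $\tau$ then gives $\tau(\gamma_\chi(p)) = \tau(p)$ for all $\chi$. Integrating over Haar measure, and using that the gauge average $\int_{\hat A} \tau \circ \gamma_\chi \, d\chi$ equals the canonical trace $\tau_0$ (check both on the dense span of the $u_a$), one obtains $\tau(p) = \tau_0(p)$, i.e.\ $\rK_0(\tau) = \rK_0(\tau_0)$ for every tracial state simultaneously --- for arbitrary torsion-free $A$, with no induction and no reduction to $\ZZ^n$ needed. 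This homotopy-plus-averaging argument is in essence how the cited lemma of Elliott goes; alternatively, as you note, one may cite Elliott directly once the reduction is in place, but then the PV discussion should be dropped rather than presented as a proof.
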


\begin{theorem}[\mbox{See \cite[Theorem 2.2 and its proof]{elliott84}}]
  \label{thm:elliott-k-theory}
  Let $A$ be a torsion-free abelian group and $\gamma: [0,1] \ra \Hom(A \wedge A, \rS^1)$ a path with contractible image from $\omega_0$ to $\omega_1$.  Consider $\gamma$ as a $\cont([0,1])$-valued 2-cocycle on $A$ and denote by $\cont([0,1]) \rtimes_\gamma A$ the twisted crossed product.  Then the evaluation maps $\ev_0: \cont([0,1]) \rtimes_\gamma A \ra \Cstar(A, \omega_0)$ and $\ev_1: \cont([0,1]) \rtimes_\gamma A \ra \Cstar(A, \omega_1)$ induce isomorphisms in K-theory.  Further, the composition
  \begin{equation*}
    \pi_{\RR / \ZZ} \circ \rK_0(\tau) \circ \rK_0(\ev_1) \circ \rK_0(\ev_0)^{-1}:
    \rK_0(\Cstar(A, \omega_0)) \longrightarrow \RR / \ZZ
  \end{equation*}
  does not depend on the choice of $\gamma$.
\end{theorem}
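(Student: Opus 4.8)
The plan is to use contractibility of the image of $\gamma$ to pass to a genuine continuous field and to strip all winding from the $\rS^1$-valued cocycles. Since $A$ is torsion-free abelian it is the directed union of its finitely generated subgroups, each free abelian, and K-theory, the trace pairing and the evaluation maps are all compatible with this direct limit, so I would reduce to $A=\ZZ^n$. Here $A\wedge A\cong\ZZ^{\binom n2}$, and $V:=\Hom(A\wedge A,\RR)$ is the universal cover of $\Hom(A\wedge A,\rS^1)=\bT^{\binom n2}$ with deck group the lattice $L:=\Hom(A\wedge A,\ZZ)$. Because $\gamma([0,1])$ is contractible, hence simply connected, $\gamma$ lifts to a continuous path $\tilde\gamma\colon[0,1]\ra V$. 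Choosing a bilinear representative $\beta_\theta$ depending linearly on the real form $\theta\in V$, the family $t\mapsto\exp(2\pi i\,\beta_{\tilde\gamma(t)})$ is a continuous choice of honest $2$-cocycles with classes $\gamma(t)$ (using Theorem~\ref{thm:cocycles-skew-symmetric-forms}), so $\cont([0,1])\rtimes_\gamma A$ becomes a continuous $\cont([0,1])$-algebra with fibre $\Cstar(A,\gamma(t))$ at $t$, exactly as in Theorem~\ref{thm:packer-raeburn}.

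For the first assertion I would introduce the universal field $\mathcal E:=\cont(V)\rtimes_\sigma A$ with $\sigma(\theta)=\exp(2\pi i\,\beta_\theta)$. Since $\beta_\theta$ is linear in $\theta$, the radial scaling $\theta\mapsto r\theta$, $r\in[0,1]$, deforms $\sigma$ continuously to the trivial cocycle at $\theta=0$, where $\mathcal E$ restricts to $\cont(V)\ot\Cstar(A)$. Contractibility of $V$ then makes every evaluation $\ev_\theta\colon\mathcal E\ra\Cstar(A,\sigma(\theta))$ a K-equivalence and canonically identifies the K-groups of all fibres. Our field is the pullback of $\mathcal E$ along $\tilde\gamma$, and $\ev_0,\ev_1$ are the pullbacks of $\ev_{\tilde\gamma(0)}$ and $\ev_{\tilde\gamma(1)}$; hence both induce isomorphisms in K-theory.

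For the second assertion, let $\gamma,\gamma'$ be two admissible paths from $\omega_0$ to $\omega_1$ and set $T_\gamma=\rK_0(\ev_1)\circ\rK_0(\ev_0)^{-1}$ and likewise $T_{\gamma'}$, both maps $\rK_0(\Cstar(A,\omega_0))\ra\rK_0(\Cstar(A,\omega_1))$. Normalising the lifts so that $\tilde\gamma(0)=\tilde\gamma'(0)$, the endpoints $\tilde\gamma(1)$ and $\tilde\gamma'(1)$ both project to $\omega_1$ and hence differ by a lattice element $m\in L$. Identifying the transports with the canonical parallel transport of the universal field $\mathcal E$ over the contractible base $V$, the two agree up to the deck transformation by $m$; that is, $T_{\gamma'}=\delta_m\circ T_\gamma$, where $\delta_m$ is the K-theory automorphism of $\Cstar(A,\omega_1)$ induced by $m\in L\cong\pi_1(\bT^{\binom n2})$. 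Since the trace pairing $\rK_0(\tau)$ of Proposition~\ref{prop:elliott-unique-pairing} is intrinsic to $\Cstar(A,\omega_1)$, it therefore suffices to prove that $\rK_0(\tau)\circ\delta_m\equiv\rK_0(\tau)\pmod{\ZZ}$, i.e. that the monodromy $\delta_m$ shifts the trace pairing by an integer on every class.

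This integrality is where the real work lies and what I expect to be the main obstacle. I would prove it by lifting a fixed K-class to a projection over the field and tracking the continuously varying fibrewise trace: its net change around the loop $\gamma\cdot\gamma'^{-1}$, whose real lift closes up to the jump $m$, is exactly $\rK_0(\tau)(\delta_m(x))-\rK_0(\tau)(x)$, and the claim is that this holonomy of the trace is an integer. This can be seen either by matching the pairing of the winding class $m\in L$ with a K-class against the integer-valued topological pairing of Proposition~\ref{prop:pairing-zn} between $\pi_1$ of the torus and its dual lattice, or, after the reduction to $A=\ZZ^n$, by invoking Elliott's explicit trace formula and checking that an integral shift of the skew matrix $\theta$ moves the trace of each canonical $\rK_0$-generator by an integer. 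This integrality is precisely what the quotient $\pi_{\RR/\ZZ}$ absorbs, yielding independence of $\gamma$. By contrast, the structural ingredients — the real lift, the universal field over $V$, and the induced canonical identification of the fibre K-groups — are routine.
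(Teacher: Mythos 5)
The paper offers no proof of this statement at all — it is imported wholesale from \cite{elliott84} ("Theorem 2.2 and its proof") — so your proposal has to stand on its own. It does not: the structural scaffolding (reduction to $\ZZ^n$, the real lift $\tilde\gamma$, the universal field over $V$, reduction of path-independence to a monodromy computation) is the right picture, but the two points you defer as verifiable are exactly where the content lies, and the crucial one is false as you state it.

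The fatal gap is the integrality claim. Your proposed check — ``an integral shift of the skew matrix $\theta$ moves the trace of each canonical $\rK_0$-generator by an integer'' — fails as soon as $\operatorname{rank} A \geq 4$. Under the straight-line identification $\rK_0(\Cstar(\ZZ^n,\sigma_\theta))\cong\Lambda^{\mathrm{even}}\ZZ^n$, the trace of the generator $e_{i_1}\wedge e_{i_2}\wedge e_{i_3}\wedge e_{i_4}$ is the Pfaffian $\mathrm{Pf}(\theta|_{\{i_1,\dots,i_4\}})$, and $\mathrm{Pf}(\theta+m)-\mathrm{Pf}(\theta)$ is not an integer for general integral $m$: with $\theta_{12}=\alpha$ irrational, all other entries $0$, and $m_{34}=1$, the shift is $\alpha$. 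Concretely, take $A=\ZZ^4$, let $\omega_0$ have value $e^{2\pi i\alpha}$ on $e_1\wedge e_2$ and be trivial otherwise, let $\omega_1$ in addition have value $-1$ on $e_3\wedge e_4$, and take the two paths lifting to $t\mapsto(\alpha,\pm t/2)$ in the coordinates $(\theta_{12},\theta_{34})$; both have contractible image, and both end at the same fibre $\Cstar(\ZZ^4,\sigma_{\omega_1})$, since $\exp(\pm\pi i a_3b_4)=(-1)^{a_3b_4}$. On the external-product class $[p_\alpha]\times[\mathrm{Bott}_{34}]$ (Rieffel projection class in the $(1,2)$-directions times the Bott class in the $(3,4)$-directions) the two transports have traces $+\alpha/2$ and $-\alpha/2$: the loop the two paths enclose has the well-known monodromy $[\mathrm{Bott}]\mapsto[\mathrm{Bott}]\pm[1]$ of the field of rotation algebras (this monodromy is why $\rK_0(\Cstar(H_3))\cong\ZZ^3$ rather than $\ZZ^4$ for the discrete Heisenberg group $H_3$), and tensoring with $[p_\alpha]$ converts its integer trace jump into a jump by $\alpha\notin\ZZ$. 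So the trace holonomy is not integral, neither of your two suggested verifications can succeed, and indeed the statement as printed is too strong: independence of $\gamma$ holds only on the subgroup of $\rK_0$ generated by $[1]$ and the cup products $[a_1]\cup[a_2]$ with $a_1,a_2\in A$, where the holonomy is $\langle m, a_1\wedge a_2\rangle\in\ZZ$. That restricted version is what Theorem \ref{thm:elliot-evaluation} expresses and is all that the paper ever uses — in Theorem \ref{thm:recovering-triple} the map $\tau_\chi$ is only applied to classes $b_1\cup b_2$ — but any correct proof must build in this restriction; no argument can remove it.

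A second, independent gap: in the first part you assert that contractibility of $V$ ``makes every evaluation $\ev_\theta$ a K-equivalence''. That is not a valid principle. Evaluation maps of a continuous $\cont(X)$-algebra over a contractible, even one-dimensional, base need not induce K-isomorphisms: the dimension-drop algebra $\{f\in\cont([0,1],\mathrm{M}_2(\CC)) : f(0),f(1)\in\CC 1\}$ is a continuous field over $[0,1]$ whose evaluations at interior points are not surjective on $\rK_0$. Nor does the radial scaling $\theta\mapsto r\theta$ of cocycles define any *-homomorphism to which homotopy invariance of K-theory could be applied. What makes these particular fields work is the Pimsner--Voiculescu sequence: writing $\cont([0,1])\rtimes_\gamma\ZZ^n$ as an $n$-fold iterated crossed product by $\ZZ$, each evaluation intertwines the PV sequences of the field and of the fibre, and induction on $n$ with the five lemma (plus a direct limit over finitely generated subgroups for general $A$) yields the isomorphism; this is Elliott's actual argument and it has to be carried out. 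Finally, a small remark on your reduction to $\ZZ^n$: the hypothesis is not inherited, since the image of a contractible set under the restriction map $\Hom(A\wedge A,\rS^1)\ra\Hom(A_i\wedge A_i,\rS^1)$ need not be contractible; this is harmless only because at the finitely generated level one can (and must) prove the suitably restricted invariance statement for arbitrary paths.
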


\begin{notation}
  \label{not:trace}
  If $A$ is a torsion-free abelian group and $\omega \in \Hom(A \wedge A, \rS^1)$ is connected to the trivial form, we denote by
  \begin{equation*}
    \tau_\omega:\rK_0(\Cstar(A)) \longrightarrow \RR / \ZZ
  \end{equation*}
  the unique map arising form Theorem \ref{thm:elliott-k-theory}.
\end{notation}

\noindent Before stating the next theorem of Elliott, recall that K-theory of an abelian \Cstar-algebra admits a cup-product (see e.g. \cite{atiyah64}).
\begin{theorem}[\mbox{See \cite[Theorem 3.1]{elliott84}}]
  \label{thm:elliot-evaluation}
  Let $A$ be a torsion-free abelian group and $\omega \in \Hom(A \wedge A, \rS^1)$ be connected to the trivial form.  Consider $A$ as a subgroup of $\rK_1(\Cstar A)$ via the embedding $A \hra \cU(\Cstar A)$.  Then
  \begin{equation*}
    \tau_\omega([a_1]_{\rK_1} \cup [a_2]_{\rK_1}) = \omega(a_1 \wedge a_2)
    \qquad
    \text{for all } a_1, a_2 \in A
    \eqstop
  \end{equation*}  
\end{theorem}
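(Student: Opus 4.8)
The plan is to reduce to the rank-two case $A \cong \ZZ^2$, identify the cup product with the Bott element of the torus, and compute its trace in the rotation algebra $\Cstar(\ZZ^2,\omega)$ by deforming $\omega$ to the trivial form. To begin, I would observe that both sides of the asserted identity are alternating bihomomorphisms in $(a_1,a_2)$. Indeed $a \mapsto [a]_{\rK_1} = [u_a]$ is a homomorphism $A \ra \rK_1(\Cstar A)$, since $u_{ab} = u_a u_b$ and classes of commuting unitaries add in $\rK_1$; as the cup product $\rK_1 \times \rK_1 \ra \rK_0$ is bilinear and graded-commutative, the left-hand side factors through a homomorphism $A \wedge A \ra \RR/\ZZ$, exactly as $\omega$ does. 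It therefore suffices to check the identity on one pair $a_1,a_2$. Passing to the subgroup $B = \langle a_1,a_2 \rangle \leq A$ and invoking naturality of the cup product together with the functoriality of the construction of $\tau_\omega$ from Theorem \ref{thm:elliott-k-theory}, I reduce to $A = B$. When $B$ has rank at most $1$ one has $B \wedge B = 0$ and both sides vanish, so the essential case is $B \cong \ZZ^2$ with $a_1,a_2$ a basis.

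For $A = \ZZ^2$ we have $\Cstar(A) \cong \cont(\bT^2)$ with $u_{a_1},u_{a_2}$ the coordinate unitaries, and Theorem \ref{thm:packer-raeburn} identifies the fibre $\Cstar(A,\omega)$ with the rotation algebra $A_\theta$, where $\omega(a_1 \wedge a_2) = e^{2\pi i \theta}$. By the standard description of the external product in topological K-theory (see \cite{atiyah64}), the class $[a_1]_{\rK_1} \cup [a_2]_{\rK_1} \in \rK_0(\cont(\bT^2))$ is the Bott generator $\beta$ of the reduced group $\tilde{\rK}^0(\bT^2) \cong \ZZ$. The whole statement thus comes down to proving that $\tau_\omega(\beta) = \theta \bmod \ZZ$.

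To this end I would use the continuous field $\cont([0,1]) \rtimes_\gamma \ZZ^2$ with fibres $A_{t\theta}$ associated to the straight-line path $\gamma_t(a_1 \wedge a_2) = e^{2\pi i t \theta}$. By Theorem \ref{thm:elliott-k-theory} the evaluation maps induce isomorphisms in $\rK_0$, so $\beta$ lifts to a single class over $[0,1]$ whose fibre images $\beta_t \in \rK_0(A_{t\theta})$ vary continuously, and the real-valued function $f(t) = \rK_0(\tau)(\beta_t)$ is a continuous lift of $t \mapsto \tau_{t\theta}(\beta)$ with $f(0) = 0$, because the Bott element has virtual rank zero in the commutative fibre. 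The key input, which is Elliott's Theorem 2.2 specialised to $\ZZ^2$, is that $f$ is affine in $t$ with constant derivative $\theta$: differentiating the trace of the deformed Bott projection along the canonical connection on the field expresses $f'(t)$ as $\theta$ times the pairing of $\beta$ with the fundamental $2$-dimensional cyclic cocycle of the torus, a pairing equal to $1$ and independent of $t$. Hence $f(1) = \theta$, giving $\tau_\omega(\beta) = \theta \bmod \ZZ = \omega(a_1 \wedge a_2)$ under the identification $\rS^1 \cong \RR/\ZZ$.

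I expect the principal obstacle to be exactly this last step, namely showing that the trace of the deformed Bott class moves affinely with derivative equal to the area of the form. This is where genuine analysis of noncommutative tori is unavoidable, be it through Connes's pairing of $\beta$ with the canonical cyclic $2$-cocycle, through Rieffel's explicit projection of trace $\theta$ together with a matching of K-classes, or through the Pimsner-Voiculescu exact sequence; none of these is purely formal. A secondary point deserving care is the naturality of $\tau_\omega$ under $\Cstar(B) \ra \Cstar(A)$, in particular checking that a path realising $\omega$ can be chosen so that its restriction to $B$ still has contractible image, so that Notation \ref{not:trace} genuinely applies to $\omega|_B$.
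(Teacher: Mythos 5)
The paper never proves this statement: it is imported wholesale from Elliott, cited as \cite[Theorem 3.1]{elliott84}, and used as a black box in Section \ref{sec:superrigidity}. So there is no internal argument to compare yours against, and your proposal must be judged on its own merits. On those merits it is essentially correct, and it is in fact a faithful reconstruction of Elliott's own route: reduce by bilinearity and naturality to the rank-two case, identify $[a_1]_{\rK_1} \cup [a_2]_{\rK_1}$ with the Bott class in $\rK_0(\cont(\bT^2))$, and show that the trace of its image in the rotation algebra moves affinely with derivative $\theta$ along the deformation. Elliott's proof is precisely such a derivative computation, so deferring that analytic core to the Pimsner--Voiculescu sequence, Rieffel's projection of trace $\theta$, or Connes' cyclic $2$-cocycle pairing is a legitimate (and unavoidable) appeal to standard noncommutative-torus theory --- no worse than the paper's own appeal to \cite{elliott84}.

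Two details should be tightened. First, with this paper's definition of the wedge product ($B \otimes_\ZZ B$ modulo $\langle b \otimes b' + b' \otimes b \rangle$), one has $B \wedge B \cong \ZZ/2\ZZ$ rather than $0$ when $B \cong \ZZ$, so your dismissal of the rank-at-most-one case needs a different justification. Both sides do vanish on such pairs, but for other reasons: if $pa_1 = qa_2$ with $p \neq 0$, then $[a_1]_{\rK_1} \cup [a_2]_{\rK_1}$ is a torsion class, hence killed by $\rK_0(\tau)$ since $\RR$ is torsion-free; and $a_1 \wedge a_2$ is torsion in $A \wedge A$, hence killed by any form connected to the trivial one (a continuous path of roots of unity starting at $1$ is constant). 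Second, the naturality issue you flag is genuine: the restriction to $B$ of a path with contractible image need not have contractible image, so Theorem \ref{thm:elliott-k-theory} cannot simply be invoked for $\omega|_B$. The repair is to avoid invoking it for $B$ at all: keep the given path $\gamma$ for $A$ (so that $\rK_0(\ev_0^A)$ is an isomorphism), restrict the continuous field to the subfield $\cont([0,1]) \rtimes_{\gamma|_B} B$, lift the Bott class by hand along this restricted field using a continuous real lift $\theta(t)$ of the angle $\gamma_t|_B(a_1 \wedge a_2)$, and track its trace, which equals $\theta(t)$; compatibility of the canonical traces under $\Cstar(B, \gamma_t|_B) \hra \Cstar(A, \gamma_t)$ then gives the desired identity modulo $\ZZ$. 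With these two repairs your argument is complete.
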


\section{The centre of group C*-algebras of virtually nilpotent groups}
\label{sec:centre}

In this section we  determine the centre of the group \Cstar-algebra of a torsion-free virtually nilpotent group.  It turns out to be immediately related to the centre of the group itself.  Let $G$ be a torsion-free virtually nilpotent group, denote by $N \unlhd G$ some maximal finite index nilpotent normal subgroup and let $F = G/N$ be the quotient group.  Choosing a section $s:F \ra G$ for the quotient map, we  define the conjugation action $G \ni g \mapsto s(f)gs(f)^{-1}$, which is  up to inner automorphisms independent of the choice of $s$.  This provides a group homomorphism $F \ra \Out(N)$.  Since $\cZ(N) \leq N$ is a characteristic subgroup, we can compose the latter homomorphism with the restriction map $\Out(N) \ra \Out(\cZ(N)) = \Aut(\cZ(N))$ to obtain an action of $F$ on $\cZ(N)$.  Having fixed this notation we restate Theorem \ref{introthm:centre} from the introduction, which is the main theorem of this section.
\setcounter{introtheorem}{2}
\begin{introtheorem}
  If $G$ is a torsion-free virtually nilpotent group and $N \unlhd G$ is a maximal finite index nilpotent normal subgroup with quotient $G/N = F$, then $\cZ(\Cstar G) = \Cstar(\cZ(N))^F$.
\end{introtheorem}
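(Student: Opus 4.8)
The plan is to reduce the statement to a purely group-theoretic identification of the FC-centre of $G$. By Proposition~\ref{prop:centre-general-group-cstar-algebra}, the FC-centre $A \unlhd G$ is a normal abelian subgroup and $\cZ(\Cstar G) = \Cstar(A)^G$, where $G$ acts on $\Cstar(A)$ by conjugation. Hence the whole theorem follows once I establish the group-level equality $A = \cZ(N)$ together with the observation that the conjugation action of $G$ on $\Cstar(\cZ(N))$ factors through $F = G/N$. The latter is immediate: elements of $N$ centralise $\cZ(N)$, so the $G$-action descends to exactly the $F$-action on $\cZ(N)$ defined at the start of this section, giving $\Cstar(\cZ(N))^G = \Cstar(\cZ(N))^F$.

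The inclusion $\cZ(N) \subseteq A$ is easy: for $z \in \cZ(N)$ the centraliser $C_G(z)$ contains $N$, hence has finite index, so $z$ has a finite conjugacy class. The reverse inclusion $A \subseteq \cZ(N)$ is the heart of the matter, and I would prove it in two steps. First, let $a \in A$, so that $C_G(a)$ has finite index in $G$ and therefore $C_N(a) = C_G(a) \cap N$ has finite index in $N$. Writing $\alpha_a$ for conjugation by $a$, this means that $\alpha_a$ fixes the finite-index subgroup $C_N(a) \leq N$ pointwise. For any $n \in N$ some power $n^l$ with $l \geq 1$ lies in $C_N(a)$, so $(a n a^{-1})^l = a n^l a^{-1} = n^l$; since $N$ is torsion-free nilpotent, roots are unique, forcing $a n a^{-1} = n$. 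Thus every $a \in A$ centralises $N$, i.e.\ $A \subseteq C_G(N)$.

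For the second step I would exploit the maximality of $N$. Since $A$ is abelian and centralises $N$, it lies in the centre of the subgroup $NA$; as $N$ is nilpotent and normal in $NA$, a lower central series computation (a group generated by a normal nilpotent subgroup together with central elements is again nilpotent) shows that $NA$ is nilpotent. Moreover $NA$ is normal in $G$ and of finite index, as it contains $N$. Maximality of $N$ then forces $NA = N$, so $A \subseteq N$, and combining with the first step gives $A \subseteq N \cap C_G(N) = \cZ(N)$. Together with $\cZ(N) \subseteq A$ this yields $A = \cZ(N)$, completing the argument. The main obstacle is the first step of the reverse inclusion: everything hinges on converting the finiteness of the conjugacy class of $a$ into the statement that $\alpha_a$ acts trivially on all of $N$, for which unique root extraction in torsion-free nilpotent groups is the essential tool; note also that no finite generation hypothesis is needed anywhere.
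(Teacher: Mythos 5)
Your proof is correct, and its overall skeleton coincides with the paper's: reduce via Proposition~\ref{prop:centre-general-group-cstar-algebra} to identifying the FC-centre $A$ of $G$ with $\cZ(N)$, close the argument with the maximality trick (your $NA$ is exactly the paper's $H = \langle \cZ_G(N), N \rangle$ in the proof of Lemma~\ref{lem:finite-conjugacy-classes}), and finally observe that the conjugation action factors through $F$. Where you genuinely diverge is in the hard inclusion, namely that an element with finite conjugacy class centralises $N$. The paper fixes such an element $g$, considers the finite set $C = \{ghg^{-1}h^{-1} \mid h \in N\}$, and pushes $C$ down the upper central series of $N$ by a descending induction, using at each stage that $N/Z_k$ is torsion-free (Lemma~\ref{lem:torsion-free-quotients}). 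You instead note that $C_N(a)$ has finite index in $N$, so every $n \in N$ has a positive power in $C_N(a)$, and then invoke uniqueness of roots in torsion-free nilpotent groups to force $ana^{-1} = n$. Both routes are valid, and as you note neither needs finite generation. Yours is shorter, but it outsources the work to the unique-roots property (a classical fact going back to Mal'cev) which the paper never states; you should cite it or prove it. Note that it follows from Lemma~\ref{lem:torsion-free-quotients} by a short induction on the nilpotency class: if $x^l = y^l$, then by induction $\bar x = \bar y$ in $G/\cZ(G)$, so $y = xz$ with $z$ central, whence $z^l = e$ and $z = e$. So in the end the two arguments rest on the same ingredient, torsion-freeness of the upper central subquotients, and are of essentially the same depth.
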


\noindent The next lemma was already stated and proven in \cite[Corollary 2]{malcev49-homogeneous} for torsion-free finitely generated nilpotent groups.  We need the more general version dropping the assumption of finite generation and hence provide a different reference.
\begin{lemma}[\mbox{\cite[Corollary 1.3]{jennings55}}]
  \label{lem:torsion-free-quotients}
  Let $G$ be a torsion-free nilpotent group.  Then $G/\cZ(G)$ is torsion-free.
\end{lemma}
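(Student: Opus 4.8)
The plan is to prove the stronger statement that \emph{every} factor $Z_{j}/Z_{j-1}$ of the upper central series of a torsion-free nilpotent group $G$ is torsion-free, and then to deduce the lemma from it. A direct induction on the nilpotency class via a quotient such as $G/Z_{c-1}$ is not available, since the whole content of the lemma is that quotients of torsion-free nilpotent groups need not be torsion-free, so the induction hypothesis could not be applied to them. Instead I would induct \emph{along the upper central series}, establishing one factor torsion-free at a time.

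The computational heart is the elementary commutator identity $[ab,y] = [a,y]^{b}[b,y]$, where $[a,y] = a^{-1}y^{-1}ay$ and $x^{b} = b^{-1}xb$, which iterates to
\begin{equation*}
  [g^{m},y] = [g,y]^{g^{m-1}} [g,y]^{g^{m-2}} \cdots [g,y]^{g}\, [g,y]
  \eqstop
\end{equation*}
Fix $j \geq 2$ and suppose $g \in Z_{j}$ with $g^{m} \in Z_{j-1}$ for some $m \geq 1$; the goal is $g \in Z_{j-1}$. For every $y \in G$ we have $w := [g,y] \in Z_{j-1}$ because $g \in Z_{j}$, while $[g^{m},y] \in Z_{j-2}$ because $g^{m} \in Z_{j-1}$. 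Since $w \in Z_{j-1}$, each $[w,g^{i}]$ lies in $Z_{j-2}$, so $w^{g^{i}} \equiv w \pmod{Z_{j-2}}$. Reducing the displayed identity modulo the normal subgroup $Z_{j-2}$ therefore yields $w^{m} \in Z_{j-2}$; that is, the image of $[g,y]$ in $Z_{j-1}/Z_{j-2}$ is annihilated by $m$.

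With this computation in hand I would run an induction on $j$ to prove that each $Z_{j}/Z_{j-1}$ is torsion-free. The base case $j=1$ is immediate, as $Z_{1}/Z_{0} = \cZ(G)$ is a subgroup of the torsion-free group $G$. For the inductive step, the previous paragraph shows the image of $[g,y]$ in $Z_{j-1}/Z_{j-2}$ is torsion; by the induction hypothesis this factor is torsion-free, so $[g,y] \in Z_{j-2}$ for all $y \in G$, which says exactly that $g \in Z_{j-1}$. Hence $Z_{j}/Z_{j-1}$ is torsion-free. To recover the lemma, note that $G/\cZ(G)$ carries the filtration by the images $Z_{j}/Z_{1}$, whose successive quotients are isomorphic to the $Z_{j}/Z_{j-1}$; since an extension of a torsion-free group by a torsion-free group is again torsion-free (a torsion element maps to a torsion element of the quotient, hence lies in the torsion-free kernel), it follows that $G/\cZ(G)$ is torsion-free.

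The main obstacle is precisely avoiding the circular induction on nilpotency class; once one commits to inducting along the upper central series, the only real content is the commutator identity together with the observation that conjugation moves an element of $Z_{j-1}$ only within its coset modulo $Z_{j-2}$, which makes the $m$-th power relation collapse to an $m$-torsion statement inside the already-known torsion-free factor $Z_{j-1}/Z_{j-2}$.
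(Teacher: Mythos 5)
Your proof is correct. There is, however, no paper proof to compare against: the paper deliberately does not prove this lemma, but quotes it from \cite[Corollary 1.3]{jennings55}, remarking only that Mal'cev had earlier treated the finitely generated case; the authors needed the version without finite generation and so supplied a reference rather than an argument. What you have written is the classical self-contained proof of that cited result: induct along the upper central series to show each factor $Z_j/Z_{j-1}$ is torsion-free, using the iterated identity $[g^m,y]=[g,y]^{g^{m-1}}\cdots[g,y]^{g}[g,y]$ together with the observation that conjugation by powers of $g$ moves $[g,y]\in Z_{j-1}$ only within its coset modulo $Z_{j-2}$, so that $g^m\in Z_{j-1}$ forces the image of $[g,y]$ in $Z_{j-1}/Z_{j-2}$ to be $m$-torsion, hence trivial by the induction hypothesis, hence $g\in Z_{j-1}$. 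Each step checks out (the identity is verified under your conventions, $Z_{j-2}$ is normal so reduction is legitimate, and the base case $j=1$ and the degenerate case $j=2$ where $Z_{j-2}=\{e\}$ both work), and your closing assembly — the filtration of $G/\cZ(G)$ by the images $Z_j/Z_1$ plus the fact that an extension of a torsion-free group by a torsion-free group is torsion-free — correctly recovers the lemma from the statement about factors. Your opening remark is also well taken: a naive induction on nilpotency class via $G/Z_{c-1}$ is unavailable precisely because the lemma itself is what guarantees such quotients are torsion-free. In short, your argument is correct and makes self-contained a point the paper outsources to the literature.
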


\noindent The next lemma describes the set of finite conjugacy classes of a torsion-free virtually nilpotent group.  Its conclusion resembles known results for torsion-free nilpotent groups \cite{mclain55}.
\begin{lemma}
  \label{lem:finite-conjugacy-classes}
  Let $G$ be a torsion-free virtually nilpotent group with maximal normal finite index nilpotent subgroup $N \unlhd G$.  Then every finite conjugacy class of $G$ lies in $\cZ(N)$.  So the FC-centre of $G$ equals $\cZ(N)$.
\end{lemma}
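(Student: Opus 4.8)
Write $A$ for the FC-centre of $G$. It is a characteristic, hence normal, subgroup of $G$, and since each of its elements has a finite $A$-conjugacy class (a subset of its finite $G$-conjugacy class), $A$ is a torsion-free FC-group, hence abelian by Proposition \ref{prop:torsion-free-fc}. The assertion to prove is precisely the equality $A = \cZ(N)$: once this is known, every finite conjugacy class lies in $\cZ(N)$, because any element conjugate to an FC-element is itself an FC-element and thus lies in $A$.

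The inclusion $\cZ(N) \subseteq A$ I would establish by a direct coset count. Fix coset representatives $g_1, \dotsc, g_k$ for $G/N$, so that $[G:N] = k$. For $z \in \cZ(N)$ and any $g = g_i n$ with $n \in N$, centrality of $z$ in $N$ gives $g z g^{-1} = g_i (n z n^{-1}) g_i^{-1} = g_i z g_i^{-1}$; hence the $G$-conjugacy class of $z$ has at most $k$ elements, so $z \in A$.

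For the reverse inclusion I would first show $A \subseteq N$. As $A$ is abelian (nilpotent of class one) and normal, and $N$ is nilpotent and normal, both $A$ and $N$ are normal nilpotent subgroups of the normal subgroup $AN \unlhd G$, so Fitting's theorem shows that $AN$ is nilpotent. Since $AN \supseteq N$ it has finite index in $G$, and maximality of $N$ among finite-index nilpotent normal subgroups forces $AN = N$, that is $A \subseteq N$. Now take $a \in A \subseteq N$; its $N$-conjugacy class is contained in its finite $G$-conjugacy class and is therefore finite. Using that the FC-centre of the torsion-free nilpotent group $N$ coincides with $\cZ(N)$ (this is the substantive input; compare \cite{mclain55}), we conclude $a \in \cZ(N)$, whence $A \subseteq \cZ(N)$.

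The one genuinely nontrivial ingredient, and the step I expect to be the main obstacle, is the fact that a torsion-free nilpotent group has FC-centre equal to its centre. I would prove it by induction on the nilpotency class. Passing to $N/\cZ(N)$, which is again torsion-free by Lemma \ref{lem:torsion-free-quotients} and of smaller class, the induction hypothesis places any FC-element $a$ in the second centre $Z_2(N)$. For $a \in Z_2(N)$ the map $h \mapsto [a,h]$ takes values in $\cZ(N)$ and is a homomorphism $N \ra \cZ(N)$, since $[a,h']$ is central. Finiteness of the conjugacy class of $a$ forces, for every $h$, some positive power $h^m$ to centralise $a$, so that $[a,h]^m = [a,h^m] = e$; torsion-freeness of $\cZ(N)$ then yields $[a,h] = e$ for all $h$, i.e.\ $a \in \cZ(N)$. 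Combining both inclusions gives $A = \cZ(N)$, which is the claim.
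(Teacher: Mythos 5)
Your proof is correct, but it takes a genuinely different route from the paper's. The paper first identifies the FC-centre of $G$ with the centraliser $\cZ_G(N)$: for an element $g$ with finite conjugacy class it considers the finite set $C = \{ghg^{-1}h^{-1} \mid h \in N\}$ and pushes it down the upper central series of $N$ --- the image of $C$ in each subquotient $N/Z_k$ is a finite subgroup, hence trivial by torsion-freeness (Lemma \ref{lem:torsion-free-quotients}) --- concluding $C = \{e\}$; it then forms $H = \langle \cZ_G(N), N\rangle$, observes $\cZ(H) = \cZ_G(N)$ and $H/\cZ(H) \cong N/\cZ(N)$, so that $H$ is a normal finite-index nilpotent subgroup, and invokes maximality of $N$ to conclude $\cZ_G(N) = \cZ(N)$. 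You instead split the claim into three inclusions: $\cZ(N) \subseteq A$ by the coset count; $A \subseteq N$ by Fitting's theorem plus maximality; and $A \cap N \subseteq \cZ(N)$ by actually proving the McLain-type result that the FC-centre of a torsion-free nilpotent group equals its centre (which the paper only cites as \cite{mclain55}), via induction on the class and the commutator homomorphism $h \mapsto [a,h]$ into $\cZ(N)$. The two uses of maximality are parallel --- each absorbs a normal nilpotent overgroup of $N$ back into $N$ --- but you certify nilpotency of $AN$ with Fitting's theorem where the paper uses the central-extension isomorphism $H/\cZ(H) \cong N/\cZ(N)$, and your induction on nilpotency class replaces the paper's descent along the upper central series, both ultimately resting on Lemma \ref{lem:torsion-free-quotients}. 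Your version is more modular and makes the group-theoretic inputs explicit; the paper's is self-contained in a different way, since its descent argument handles arbitrary FC-elements of $G$ directly, without needing to place them inside $N$ first.
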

\begin{proof}
  Let us start by showing that the FC-centre of $G$ equals $\cZ_G(N)$.  If $g \in \cZ_G(N)$, then $g$ has a finite conjugacy class in $G$, since $N \leq G$ has finite index.  Vice versa, let $g \in G$ have a finite conjugacy class and we will show that $g \in \cZ_G(N)$.  Then $C = \{g h g^{-1} h^{-1} \mid h \in N\}$ is finite and we first show that $C = \{e\}$, or equivalently $g \in \cZ_G(N)$.  Let $\{e\} = Z_0 \leq \cZ(N) = Z_1 \leq Z_2 \leq \dotsm \leq Z_n = N$ be the upper central series of $N$ and assume that $C \subset Z_{k+1}$ holds for some $k \geq 0$.  Since $Z_{k+1}/Z_k \leq N/Z_k$ is central, we obtain for all $h_1, h_2 \in N$ that
  \begin{equation*}
    [gh_1g^{-1}h_1^{-1}]_{Z_k} [gh_2g^{-1}h_2^{-1}]_{Z_k}
    =
    [gh_1g^{-1}]_{Z_k} [gh_2g^{-1}h_2^{-1}]_{Z_k} [h_1^{-1}]_{Z_k}
    =
    [g(h_1h_2)g^{-1} (h_1h_2)^{-1}]_{Z_k}
  \end{equation*}
  showing that the image of $C$ in $N/Z_k$ is a finite subgroup.  Since $N$ is torsion-free nilpotent, Lemma \ref{lem:torsion-free-quotients} says that also $N/Z_k$ is torsion-free.  This shows that the image of $C$ in $N/Z_k$ is trivial and hence $C \subset Z_k$.  Because $k \geq 0$ was arbitrary, we conclude that $C = \{e\}$ or equivalently $g \in \cZ_G(N)$.

  We showed that $\cZ_G(N)$ is the FC-centre of $G$.  Since every torsion-free FC-group is abelian by Proposition \ref{prop:torsion-free-fc}, it follows that $\cZ_G(N)$ is abelian.  We denote $H = \langle \cZ_G(N), N \rangle \leq G$ and observe that $\cZ(H) = \cZ_G(N)$, since $\cZ_G(N)$ is abelian.  Because of the isomorphism $H/\cZ(H) \cong N/\cZ(N)$ we conclude that $H$ is nilpotent.  Further, $H$ is normal in $G$.  Since $N$ is a maximal normal finite index nilpotent subgroup of $G$, it follows that $H = N$ or equivalently $\cZ_G(N) = \cZ(N)$, showing that the FC-centre of $G$ equals $\cZ(N)$.
\end{proof}

\noindent We are now ready to prove the main theorem of this section.
\begin{proofseparate}[Proof of Theorem \ref{introthm:centre}]
By Lemma \ref{lem:finite-conjugacy-classes} the FC-centre of $G$ equals $\cZ(N)$.  So Proposition \ref{prop:centre-general-group-cstar-algebra} applies to show that $\cZ(\Cstar(G)) = \Cstar(\cZ(N))^G$.  Since the conjugation action of $G$ on $\cZ(N)$ factors through the finite group $F$, we obtain the identification $\Cstar(\cZ(N))^G = \Cstar(\cZ(N))^F$.
This finishes the proof of the theorem.  
\end{proofseparate}

\section{Characterising group C*-algebras of nilpotent groups}
\label{sec:characterising}

In this section we will prove Theorem \ref{introthm:nilpotent}, which is not only one of our main results, but will furthermore allow us to systematically invoke the structure of nilpotent groups when proving \Cstar-superrigidity results in Section~\ref{sec:superrigidity}.   Before proceeding to the proof of Theorem \ref{introthm:nilpotent}, we prepare two lemmas providing important ingredients.
\begin{lemma}
  \label{lem:nilpotent-vs-virtually-nilpotent}
  Let $G$ be a torsion-free finitely generated nilpotent group and $H$ be a virtually nilpotent group.  If $\Cstar(G) \cong \Cstar(H)$, then $H$ is torsion-free finitely generated and nilpotent.
\end{lemma}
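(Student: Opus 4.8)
The plan is to induct on the nilpotency class $c$ of $G$, decomposing $\Cstar(G)$ and $\Cstar(H)$ as C*-bundles over their common centre and applying the inductive hypothesis to the quotients by the centres. Since virtually nilpotent groups are amenable, every cited result applies to $\Cstar(H)$. First I would record that $H$ is torsion-free: as $G$ is torsion-free and nilpotent, $\Cstar(G)$ has no non-trivial projections by Theorem \ref{thm:chacterising-tf}, hence neither does $\Cstar(H)$, and the converse direction of Theorem \ref{thm:chacterising-tf} (valid for any group) shows $H$ is torsion-free.

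Next I would pin down the centre. Let $N \unlhd H$ be a maximal normal finite index nilpotent subgroup with finite quotient $F = H/N$. By Theorem \ref{introthm:centre} we have $\cZ(\Cstar H) = \Cstar(\cZ(N))^F \cong \cont(\widehat{\cZ(N)})^F$, while the isomorphism identifies $\cZ(\Cstar H) \cong \cZ(\Cstar G) = \Cstar(\cZ(G)) \cong \cont(\bT^m)$, where $\cZ(G) \cong \ZZ^m$ because the centre of a finitely generated nilpotent group is finitely generated and torsion-free. As $F$ is finite it acts with finite orbits on the torsion-free abelian group $\cZ(N)$, so Theorem \ref{thm:quotient-implies-free} yields $\cZ(N) \cong \ZZ^m$ and triviality of the $F$-action. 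Triviality forces $\cZ(N) \leq \cZ(H)$, while Lemma \ref{lem:finite-conjugacy-classes} gives $\cZ(H) \leq \cZ(N)$ (central elements lie in the FC-centre); hence $\cZ(H) = \cZ(N) \cong \ZZ^m$, so $\cZ(\Cstar H) = \Cstar(\cZ(H))$ and the Glimm space of $\Cstar(H)$ is $\widehat{\cZ(H)} \cong \bT^m$, matching that of $\Cstar(G)$.

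For the induction, the base case $c \leq 1$ is when $G$ is abelian, so $\Cstar(G) \cong \cont(\bT^k)$ is commutative; then $\Cstar(H)$ is commutative, $H$ is abelian and torsion-free, and Theorem \ref{thm:quotient-implies-free} with the trivial action gives $H \cong \ZZ^k$. In the inductive step I would decompose both algebras as C*-bundles over $\bT^m$ via Theorem \ref{thm:packer-raeburn}, with fibres $\Cstar(G/\cZ(G), \sigma_\chi)$ and $\Cstar(H/\cZ(H), \sigma_\chi)$ respectively; the isomorphism identifies the centres and so carries fibres to fibres. The fibre of $\Cstar(H)$ over the trivial character is $\Cstar(H/\cZ(H))$, which carries the character of the trivial representation of $H/\cZ(H)$, so its partner fibre of $\Cstar(G)$ admits a character; by Proposition \ref{prop:one-dimensional-projective-representation} the twist there is inner and that fibre is the untwisted $\Cstar(G/\cZ(G))$. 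Thus $\Cstar(H/\cZ(H)) \cong \Cstar(G/\cZ(G))$. Since $G/\cZ(G)$ is torsion-free (Lemma \ref{lem:torsion-free-quotients}), finitely generated and nilpotent of class $c-1$, while $H/\cZ(H)$ is virtually nilpotent (it contains $N/\cZ(N)$ with finite index), the inductive hypothesis makes $H/\cZ(H)$ torsion-free, finitely generated and nilpotent. A central extension of a nilpotent group by $\ZZ^m$ is again nilpotent and finitely generated, so $H$ is torsion-free, finitely generated and nilpotent.

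The main obstacle is exactly the inductive step's passage to the trivial character: one must guarantee that the fibre matched with $\Cstar(H/\cZ(H))$ is \emph{untwisted}, which is what lets the induction descend and, in effect, rules out torsion in $H/\cZ(H)$. This cannot be read off from the centre alone, since there are torsion-free virtually nilpotent \emph{non}-nilpotent groups — for instance a suitable semidirect product of the integral Heisenberg group with $\ZZ$ — whose finite quotient acts trivially on $\cZ(N)$, so that the centre computation is satisfied yet $H$ fails to be nilpotent. It is the detection of a character in the distinguished fibre, through Proposition \ref{prop:one-dimensional-projective-representation}, combined with the inductive hypothesis at strictly smaller class, that separates the genuinely nilpotent case from these impostors.
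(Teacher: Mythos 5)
Your proposal is correct and follows essentially the same route as the paper's own proof: induction on the nilpotency class, torsion-freeness of $H$ via Theorem \ref{thm:chacterising-tf}, identification $\cZ(\Cstar(H)) = \Cstar(\cZ(N)) = \Cstar(\cZ(H))$ via Theorem \ref{introthm:centre} and Theorem \ref{thm:quotient-implies-free}, then matching the fibre of $\Cstar(H)$ over the trivial character with an untwisted $\Cstar(G/\cZ(G))$ via Proposition \ref{prop:one-dimensional-projective-representation} so the induction hypothesis applies, and finally the central-extension argument. The only (immaterial) deviations are that you deduce $\cZ(H) \subseteq \cZ(N)$ from Lemma \ref{lem:finite-conjugacy-classes} rather than directly from maximality of $N$, and you settle the base case with Theorem \ref{thm:quotient-implies-free} instead of citing Scheinberg's \Cstar-superrigidity of torsion-free abelian groups.
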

\begin{proof}
  We prove the lemma by induction on the nilpotency class of $G$.  If the class of $G$ is at most $1$, then $G$ is abelian.  Since torsion-free abelian groups are \Cstar-superrigid by \cite[Theorem 1]{scheinberg74}, the isomorphism $\Cstar(G) \cong \Cstar(H)$ implies $G \cong H$.  In particular, $H$ is torsion-free finitely generated and nilpotent.  Let us assume that $G$ has class $c$ and that the conclusion of the lemma is known for all groups of nilpotency class at most $c-1$.  Note that by Theorem \ref{thm:chacterising-tf}, the group $H$ is torsion-free.  Let $N \unlhd H$ be a maximal finite index nilpotent normal subgroup.  By Theorem \ref{introthm:centre}, we have the following identifications of the centre of $\Cstar(G)$ and $\Cstar(H)$.
  \begin{align*}
    \cZ(\Cstar G) & = \Cstar(\cZ(G)) \\
    \cZ(\Cstar H) & = \Cstar(\cZ(N))^H
                    \eqstop
  \end{align*}
  Since $N \unlhd H$ has finite index, the action of $H$ on $\cZ(N)$ has finite orbits.  Further, $\Cstar(\cZ(N))^H \cong \Cstar(\cZ(G)) \cong \cont(\bT^n)$ for some $n \in \NN$ follows from the identification $\Cstar(H) \cong \Cstar(G)$.   So Theorem \ref{thm:quotient-implies-free} applies to the action $H \grpaction{} \cZ(N)$ by conjugation and we obtain the isomorphism $\cZ(N) \cong \ZZ^n$ and know that $H$ acts trivially on $\cZ(N)$, implying that $\cZ(N) \subset \cZ(H)$.  So $\cZ(H) \subset \cZ(N)$ follows from maximality of $N \unlhd H$ and in particular $\cZ(\Cstar H) = \Cstar(\cZ(N))$.  Consider the fibre of $\Cstar(G) \cong \Cstar(H)$ over the trivial character of $\cZ(N)$, which by Theorem \ref{thm:packer-raeburn} on the one hand is isomorphic with $\Cstar(H/\cZ(N))$  and on the other is a twisted group \Cstar-algebra of $G/\cZ(G)$.  Since $\Cstar(H/\cZ(N))$ admits a character, Proposition \ref{prop:one-dimensional-projective-representation} shows that $\Cstar(G/\cZ(G)) \cong \Cstar(H/\cZ(N))$.  The induction hypothesis applies to show that $H/\cZ(N)$ is torsion-free finitely generated and nilpotent.  Since $\cZ(N) \cong \ZZ^n$ was already shown, we see that $H$ is a central extension of $\ZZ^n$ by a torsion-free finitely generated nilpotent group.  So $H$ itself is torsion-free finitely generated and nilpotent.
\end{proof}

\begin{lemma}
  \label{lem:subquotients}
  Let $G$ and $H$ be two torsion-free nilpotent groups.  If $\Cstar(G) \cong \Cstar(H)$, then $G$ and $H$ have the same class and the subquotients of their upper central series are isomorphic.
\end{lemma}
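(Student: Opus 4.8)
The plan is to prove the statement by induction on the nilpotency class of $G$, peeling off one layer of the upper central series at each stage by descending to $G/\cZ(G)$, whose class is one smaller. First I would record that $\cZ(G)$ and $\cZ(H)$ are torsion-free abelian, being subgroups of the torsion-free groups $G$ and $H$. Since $G$ and $H$ are nilpotent, Theorem \ref{introthm:centre} (applied with $N = G$, resp. $N = H$, and trivial finite quotient) gives $\cZ(\Cstar G) = \Cstar(\cZ(G))$ and $\cZ(\Cstar H) = \Cstar(\cZ(H))$. Any isomorphism $\Cstar(G) \cong \Cstar(H)$ restricts to an isomorphism of centres, so $\Cstar(\cZ(G)) \cong \Cstar(\cZ(H))$, and Scheinberg's \Cstar-superrigidity of torsion-free abelian groups \cite[Theorem 1]{scheinberg74} yields $\cZ(G) \cong \cZ(H)$. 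This already identifies the bottom subquotient $Z_1/Z_0$ of the two upper central series, and it settles the base case (class $\leq 1$), where $G$ is abelian and superrigidity gives $G \cong H$ outright.

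For the inductive step, assuming the result for class ${} < c$ and that $G$ has class $c \geq 2$, I would produce an isomorphism $\Cstar(G/\cZ(G)) \cong \Cstar(H/\cZ(H))$. By Theorem \ref{thm:packer-raeburn}, $\Cstar(G)$ is a \Cstar-bundle over $\widehat{\cZ(G)}$, and an isomorphism $\Phi\colon \Cstar(G) \ra \Cstar(H)$ restricts to an isomorphism of the central subalgebras, hence induces a homeomorphism of the dual (Glimm) spaces together with isomorphisms of the corresponding fibres. The subtle point — and the step I expect to be the main obstacle — is that this homeomorphism need not send the trivial character of $\cZ(G)$ to that of $\cZ(H)$, so I cannot naively match ``trivial fibre with trivial fibre''. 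I would circumvent this as follows: the fibre of $\Cstar(G)$ at the trivial character $\veps$ is the genuine group \Cstar-algebra $\Cstar(G/\cZ(G))$, which admits a character; since admitting a character is an isomorphism invariant of a fibre, the corresponding fibre of $\Cstar(H)$ — which by Theorem \ref{thm:packer-raeburn} is a twisted group \Cstar-algebra $\Cstar(H/\cZ(H), \tau_\psi)$ for some $\psi \in \widehat{\cZ(H)}$ — also admits a character. By Proposition \ref{prop:one-dimensional-projective-representation} the twist $\tau_\psi$ is then inner and $\Cstar(H/\cZ(H), \tau_\psi) \cong \Cstar(H/\cZ(H))$. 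Composing the fibre identifications gives $\Cstar(G/\cZ(G)) \cong \Cstar(H/\cZ(H))$, with no need to know which character $\psi$ actually is.

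Finally I would reassemble the data. By Lemma \ref{lem:torsion-free-quotients} both $G/\cZ(G)$ and $H/\cZ(H)$ are again torsion-free nilpotent, now of class $c - 1$, so the induction hypothesis gives them equal class and isomorphic subquotients of their upper central series. Translating back to $G$ uses the standard fact that the upper central series of $G/\cZ(G)$ is $(Z_{i+1}/\cZ(G))_i$, whose successive subquotients are $Z_{i+2}/Z_{i+1}$, i.e.\ exactly the higher subquotients of $G$; combined with the already-established bottom subquotient $\cZ(G) \cong \cZ(H)$ this shows that all subquotients of $G$ and $H$ agree, while the class of $G$ equals $1$ plus the class of $G/\cZ(G)$, hence equals the class of $H$. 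The one routine check I would still carry out is that all invoked results (Theorem \ref{introthm:centre}, Theorem \ref{thm:packer-raeburn}, Proposition \ref{prop:one-dimensional-projective-representation}, Lemma \ref{lem:torsion-free-quotients}) apply without a finite-generation hypothesis, so that the lemma holds for arbitrary torsion-free nilpotent groups.
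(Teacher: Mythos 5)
Your proof is correct and follows essentially the same route as the paper's: induction on the nilpotency class, identification of the centres via Theorem \ref{introthm:centre} and Scheinberg's \Cstar-superrigidity of torsion-free abelian groups, and then matching the fibre $\Cstar(G/\cZ(G))$ over the trivial character with a fibre of $\Cstar(H)$ using Theorem \ref{thm:packer-raeburn} and Proposition \ref{prop:one-dimensional-projective-representation}. The point you flag as subtle --- that the induced homeomorphism of Glimm spaces need not send the trivial character to the trivial character, so one detects the untwisted fibre by the existence of a character --- is precisely the tacit content of the paper's appeal to Proposition \ref{prop:one-dimensional-projective-representation}, which you have simply made explicit.
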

\begin{proof}
  We prove the lemma by induction on the nilpotency class of $G$.  If $G$ is of nilpotency class at most $1$, then $G$ is abelian.  Since torsion-free abelian groups are \Cstar-superrigid by \cite[Theorem 1]{scheinberg74}, it follows that $G \cong H$.  Assume now that the lemma is proven for all groups of nilpotency class at most $c - 1$ and let $G$ be of nilpotency class $c$.  By Theorem \ref{introthm:centre}, we have $\Cstar(\cZ(G)) = \cZ(\Cstar(G)) \cong \cZ(\Cstar(H)) = \Cstar(\cZ(H))$.  In particular, $\cZ(G) \cong \cZ(H)$ by \Cstar-superrigidity of torsion-free abelian groups.  The fibre of $\Cstar(G)$ over the trivial character of $\cZ(G)$ is isomorphic with $\Cstar(G / \cZ(G))$, by Theorem \ref{thm:packer-raeburn}.  The same theorem combined with Proposition \ref{prop:one-dimensional-projective-representation} shows that this fibre is isomorphic with $\Cstar(H / \cZ(H))$.  So the induction hypothesis applies and shows that $G/ \cZ(G)$ and $H / \cZ(H)$ have the same nilpotency class $c-1$ and the subquotients of their upper central series agree.  It follows that $G$ and $H$ have nilpotency class $c$.  Moreover, we already showed that $\cZ(G) \cong \cZ(H)$ completing the proof that the subquotients of the upper central series of $G$ and $H$ are isomorphic. 
\end{proof}

\noindent Let us now proceed to the proof of this section's main theorem. 

\begin{proofseparate}[Proof of Theorem \ref{introthm:nilpotent}]
  We prove this theorem by induction on the nilpotency class $c$ of $G$.  If $c = 0$, then $G$ is trivial and the conclusion of the theorem is obvious.  If $c = 1$, then $G$ is a torsion-free abelian group and the conclusion follows from \Cstar-superrigidity of such groups.   Let us now assume that the theorem is proven for groups of nilpotency class at most $c - 1$ and we will show it for $G$ of nilpotency class $c$.  We first collect properties of $\Cstar(G)$ used in the subsequent arguments.
  \begin{itemize}
  \item $\Cstar(G)$ is amenable since $G$ is amenable.
  \item It admits no non-trivial projections, by Theorem \ref{thm:chacterising-tf}.
  \item Its centre is isomorphic with $\cont(\bT^n)$ for some $n \in \NN$, by Theorem \ref{introthm:centre}.
  \item Its primitive ideal space is $\Tone$, by Theorem \ref{thm:tone-iff-virt-nilpotent}.
  \end{itemize}
  From the isomorphism $\Cstarred(H) \cong \Cstar(G)$ we conclude that $H$ is amenable.  Further by Theorem \ref{thm:chacterising-tf},   $H$ is torsion-free.  We next show that $H$ is finitely generated.  Applying Proposition \ref{prop:centre-general-group-cstar-algebra}, we find that there is a normal abelian subgroup $A \leq H$ on which $H$ acts with finite orbits such that $\cZ(\Cstar(H)) = \Cstar(A)^H$.  Combined with the isomorphism $\cZ(\Cstar(H)) \cong \cZ(\Cstar G) \cong \cont(\bT^n)$ for some $n \in \NN$, Theorem \ref{thm:quotient-implies-free} applies to show that $H$ acts trivially on $A$.  It follows that $\cZ(H) = A$ and $\cZ(\Cstar(H)) = \Cstar(A)$.  Note that the fibre of $\Cstar(H)$ at the trivial character of $A$ is $\Cstar(H/A)$.  So $\Cstar(H/A)$ is isomorphic with some fibre of $\Cstar(G)$.  By Theorems \ref{introthm:centre} and \ref{thm:packer-raeburn}, it follows that $\Cstar(H/A)$ is isomorphic with a twisted group \Cstar-algebra of the quotient $G/\cZ(G)$.  Now Proposition \ref{prop:one-dimensional-projective-representation} implies that the twist is trivial and $\Cstar(H/A) \cong \Cstar(G/\cZ(G))$.  So the induction hypothesis applies to show that $H/A$ is a torsion-free finitely generated nilpotent group.  Because $A$ and $H/A$ are finitely generated,  $H$ is also finitely generated.  Since $H$ is finitely generated and has a $\Tone$ primitive ideal space, Theorem \ref{thm:tone-iff-virt-nilpotent} says that $H$ is virtually nilpotent.  Lemma \ref{lem:nilpotent-vs-virtually-nilpotent} applies to show that $H$ is actually torsion-free finitely generated and nilpotent.  Lemma \ref{lem:subquotients} says that the class of $H$ and the class of $G$ agree and that subquotients of their upper central series are isomorphic.
\end{proofseparate}

\section{C*-superrigidity of 2-step nilpotent groups}
\label{sec:superrigidity}

In this section we prove Theorem \ref{introthm:superrigid} showing that all torsion-free finitely generated 2-step nilpotent groups are \Cstar-superrigid.  Theorem \ref{introthm:superrigid} is derived from the technical Theorem \ref{thm:recovering-triple}, whose notation we now introduce.

  Let $G$ be a torsion-free finitely generated 2-step nilpotent group with centre $Z.$  Theorem \ref{introthm:centre} provides a natural isomorphism $\hat \vphi_Z: \hat Z \ra \Glimm(\Cstar G)$ induced by the Fourier transform.  We denote the induced isomorphism of fundamental groups by $\vphi_Z = \pi_1(\hat \vphi_Z): Z \ra \pi_1(\Glimm (\Cstar G))$.  Further, if $\veps \in \Glimm(\Cstar G)$ is an element such that the fibre $\Cstar(G)_\veps$ admits a character, we obtain a natural isomorphism $\Cstar(G)_\veps \cong \Cstar(G/Z)$ from Theorem \ref{thm:packer-raeburn} and Proposition \ref{prop:one-dimensional-projective-representation}.  In particular, the natural map $G/Z \ra \rK_1(\Cstar(G)_\veps)$ is an isomorphism onto its image, which equals $\im \bigl (\cU(\Cstar G) \ra \rK_1(\Cstar G)_\veps \bigr)$.  We denote this isomorphism by $\vphi_{\mathrm{ab}}:  G/Z \ra \im (\cU(\Cstar G) \ra \rK_1(\Cstar G)_\veps)$.

  Recall the notation and Elliott's results on K-theory of noncommutative tori from Section \ref{sec:elliott}. In particular,for $\chi \in \Glimm(\Cstar G)$ the isomorphism $\Cstar(G)_\chi \cong \Cstar(G/Z, \chi \circ \sigma)$ from Theorem \ref{thm:packer-raeburn} provides an isomorphism in K-theory, so that Elliott's work gives rise to an isomorphism $\rK_*(\Cstar(G)_\veps) \ra \rK_*(\Cstar(G)_\chi)$, whose restriction defines a unique map $\rK_0(\Cstar(G)_\veps) \ra \rK_0(\Cstar(G)_\chi) \ra \RR/\ZZ$ as described in Theorem \ref{thm:elliott-k-theory}.  We denote this map by $\tau_\chi$.

 We also make use of the cup-product in K-theory of the abelian \Cstar-algebra $\Cstar(G)_\veps$, for which we refer to \cite{atiyah64}.
 
\begin{theorem}
  \label{thm:recovering-triple}
  Let $G$ be a torsion-free finitely generated 2-step nilpotent group.   Let $\veps \in \Glimm(\Cstar G)$ such that $\Cstar(G)_\veps$ admits a character.  Put
  \begin{align*}
    A & = \pi_1(\Glimm(\Cstar G)) \eqcomma \\
    B & = \im \bigl (\cU(\Cstar G) \ra \rK_1(\Cstar G)_\veps \bigr )
        \eqstop
  \end{align*}
  Choose some isomorphism $A \cong \ZZ^m$ inducing a bilinear form on $A$ and define an $A$-valued skew-symmetric form $\omega \in \Hom(B \wedge B , A)$ by the formula
  \begin{equation*}
    \langle \omega(b_1 \wedge b_2), a \rangle = \pi_1(f_{b_1 \wedge b_2})(a)
    \qquad
    \forall a \in A
  \end{equation*}
  where
  \begin{equation*}
    f_{b_1 \wedge b_2}: \Glimm(\Cstar G) \ra \RR/\ZZ:
    f_{b_1 \wedge b_2}(\chi) = \tau_\chi(b_1 \cup b_2)
    \eqstop
  \end{equation*}
    Then $(A, B, \omega) \sim (\cZ(G), G/\cZ(G), \omega_G)$ in the sense of Corollary \ref{cor:extension-data-skew-symmetric}.
\end{theorem}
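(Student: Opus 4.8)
The plan is to produce the two structural isomorphisms $\vphi_A \colon A \ra \cZ(G)$ and $\vphi_B \colon B \ra G/\cZ(G)$ required by Corollary \ref{cor:extension-data-skew-symmetric} and to verify that they intertwine $\omega$ with $\omega_G$. For $\vphi_B$ I would simply take the inverse of the isomorphism $\vphi_{\mathrm{ab}} \colon G/Z \ra B$ supplied before the theorem; for $\vphi_A$ I would use the isomorphism $A \ra \cZ(G)$ determined by matching the bilinear form chosen on $A \cong \ZZ^m$ with the perfect duality pairing $\cZ(G) \times A \ra \ZZ$, $(z,a) \mapsto \pi_1(\ev_z)(a)$, of Proposition \ref{prop:pairing-zn} (up to conventions this is $\vphi_Z^{-1}$). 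Writing $Z = \cZ(G)$, the whole theorem then reduces to the single identity $\vphi_A(\omega(b_1 \wedge b_2)) = \omega_G(\vphi_B(b_1) \wedge \vphi_B(b_2))$ for all $b_1, b_2 \in B$, which I would establish by computing the function $f_{b_1 \wedge b_2}$ explicitly.

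The heart of the argument is the identification of $f_{b_1 \wedge b_2}$ with a single evaluation character on $\hat Z$. Put $\bar g_i = \vphi_B(b_i) \in G/Z$, so that $b_i$ is the class in $\rK_1(\Cstar(G)_\veps)$ of the unitary $\bar g_i$ under $G/Z \hra \cU(\Cstar(G)_\veps) \cong \cU(\Cstar(G/Z))$. Because $\Cstar(G)_\veps$ admits a character, Proposition \ref{prop:one-dimensional-projective-representation} makes the basepoint fibre untwisted and forces the skew form at $\veps$ to vanish; together with connectedness of $\hat Z$ this places every $\omega_\chi$ in the range of Elliott's machinery. Applying Theorem \ref{thm:elliot-evaluation} fibrewise gives $\tau_\chi(b_1 \cup b_2) = \omega_\chi(\bar g_1 \wedge \bar g_2)$, where $\omega_\chi \in \Hom(G/Z \wedge G/Z, \rS^1)$ is the skew-symmetrisation of $\sigma_\chi = \chi \circ \sigma_G$. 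By Theorem \ref{thm:cocycles-skew-symmetric-forms} and $\ZZ$-linearity of $\chi$ this equals $\chi(\omega_G(\bar g_1 \wedge \bar g_2))$. Hence, setting $z_0 = \omega_G(\bar g_1 \wedge \bar g_2) \in Z$, I obtain the clean formula $f_{b_1 \wedge b_2}(\chi) = \chi(z_0) = \ev_{z_0}(\chi)$, that is $f_{b_1 \wedge b_2} = \ev_{z_0}$.

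It then remains to pass to fundamental groups and compare with the defining formula for $\omega$. Proposition \ref{prop:pairing-zn} computes $\pi_1(\ev_{z_0})(a) = \langle z_0, a\rangle$ for the duality pairing $Z \times A \ra \ZZ$, so the definition of $\omega$ yields $\langle \omega(b_1 \wedge b_2), a\rangle = \langle z_0, a\rangle$ for every $a \in A$. Since both the chosen bilinear form on $A$ and this duality pairing are nondegenerate, this identity pins down $\omega(b_1 \wedge b_2)$ as the unique element of $A$ corresponding to $z_0$, which is exactly $\vphi_A^{-1}(z_0)$ by the definition of $\vphi_A$. Applying $\vphi_A$ gives $\vphi_A(\omega(b_1 \wedge b_2)) = z_0 = \omega_G(\vphi_B(b_1) \wedge \vphi_B(b_2))$, the desired intertwining, and the equivalence $(A,B,\omega) \sim (\cZ(G), G/\cZ(G), \omega_G)$ follows from Corollary \ref{cor:extension-data-skew-symmetric}.

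I expect the main obstacle to be bookkeeping rather than anything conceptual: one must keep three different $\ZZ^m$-pairings straight---the bilinear form chosen on $A$, the duality $Z \times A \ra \ZZ$ of Proposition \ref{prop:pairing-zn}, and the Fourier identification $\vphi_Z \colon Z \ra A$---and verify they are mutually compatible, so that the abstract matching isomorphism $\vphi_A$ really is an isomorphism onto $\cZ(G)$. A secondary point requiring care is the legitimacy of applying Theorem \ref{thm:elliot-evaluation} uniformly in $\chi$: one must check that the forms $\omega_\chi$ trace out a path with contractible image from the trivial form at $\veps$, which is precisely where the character hypothesis on $\Cstar(G)_\veps$ and the connectedness of $\hat Z$ enter.
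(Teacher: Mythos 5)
Your proposal is correct and follows essentially the same route as the paper's proof: both apply Elliott's evaluation theorem (Theorem \ref{thm:elliot-evaluation}) fibrewise to identify $f_{b_1 \wedge b_2}$ with an evaluation character composed with the Fourier identification of $\Glimm(\Cstar G)$ with $\hat Z$, and then use Proposition \ref{prop:pairing-zn} on fundamental groups to obtain the intertwining $\omega \circ (\vphi_{\mathrm{ab}} \wedge \vphi_{\mathrm{ab}}) = \vphi_Z \circ \omega_G$. The only cosmetic difference is that you characterise $\vphi_A$ via nondegeneracy of the duality pairing and solve for $\omega(b_1 \wedge b_2)$, whereas the paper works directly with $\vphi_Z$ and verifies the same identity by pairing against loop classes $[h]$.
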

\begin{proof}
  As in the introduction of this section, we will write $Z = \cZ(G)$.  We start by showing that 
  \begin{equation}
    \label{eq:formula}
    f_{b_1 \wedge b_2}
    =
    \ev_{\omega_G \circ (\vphi_{\mathrm{ab}}^{-1} \wedge \vphi_{\mathrm{ab}}^{-1})(b_1 \wedge b_2) }\circ \hat \vphi_Z^{-1}
    \eqstop
  \end{equation}
  Let $b_1, b_2 \in B$ and write $g_i = \vphi_{\mathrm{ab}}^{-1}(b_i) \in G/Z$ for $i \in \{1, 2\}$. Let $\chi \in \Glimm(\Cstar G)$, consider $\hat \vphi_Z^{-1}(\chi) \in \hat Z$ and note that $\hat \vphi_Z^{-1}(\chi) \circ \omega_G \in \Hom(G/Z \wedge G/Z, \rS^1)$.  So Theorem \ref{thm:elliot-evaluation} applies and says that
  \begin{equation*}
    \tau_{\hat \vphi_Z^{-1}(\chi)\circ \omega_G}([g_1]_{\rK_1} \cup [g_2]_{\rK_1})
    =
    \hat \vphi_Z^{-1}(\chi) \circ \omega_G (g_1 \wedge g_2)
    \eqstop
  \end{equation*}
  The left-hand side can be rewritten as
  \begin{equation*}
    \tau_{\hat \vphi_Z^{-1}(\chi)\circ \omega_G}([g_1]_{\rK_1} \cup [g_2]_{\rK_1})
    =
    \tau_\chi(b_1 \cup b_2)
  \end{equation*}
  thanks to the identification $\Cstar(G)_\chi \cong \Cstar(G/Z, \chi \circ \omega_G)$ from Theorem \ref{thm:packer-raeburn} and the definition of $g_1, g_2$.  We obtain
      \begin{equation*}
      f_{b_1 \wedge b_2}(\chi)
      =
      \hat \vphi_Z^{-1}(\chi)
      \Bigl (
      \omega_G \circ (\vphi_{\mathrm{ab}}^{-1} \wedge \vphi_{\mathrm{ab}}^{-1})(b_1 \wedge b_2)
      \Bigr )
      \qquad
      \text{for all } \chi \in \Glimm(\Cstar G)
      \eqcomma
    \end{equation*}
    which is equivalent to formula (\ref{eq:formula}).

    We consider $Z$ equipped with the bilinear form induced from $\vphi_Z: Z \ra A$.  Let $h: \rS^1 \ra \Glimm(\Cstar G)$ be a loop.  Then 
  \begin{align*}
    \langle \omega \circ (\vphi_{\mathrm{ab}} \wedge \vphi_{\mathrm{ab}})(g_1 \wedge g_2), [h] \rangle
    & =
      \pi_1(f_{b_1 \wedge b_2})([h])
      \tag{definition of $\omega$} \\
    & =
      \pi_1(\ev_{\omega_G (g_1 \wedge g_2)} \circ \hat \vphi_Z^{-1})([h])
      \tag{formula (\ref{eq:formula})} \\
    & =
      \pi_1(\ev_{\omega_G (g_1 \wedge g_2)})([\hat \vphi_Z^{-1} \circ h])
      \tag{elementary property of $\pi_1$} \\
    & =
      \langle \omega_G(g_1 \wedge g_2) , [\hat \vphi_Z^{-1} \circ h] \rangle
      \tag{Proposition \ref{prop:pairing-zn}} \\
    & =
      \langle \omega_G(g_1 \wedge g_2) , \vphi_Z^{-1} ([h]) \rangle
      \tag{definition of $\vphi_Z$} \\
    & =
      \langle \vphi_Z \circ \omega_G(g_1 \wedge g_2) , [h] \rangle \eqstop
      \tag{definition of the bilinear form on $Z$}
  \end{align*}
  It follows that $\omega  \circ (\vphi_{\mathrm{ab}} \wedge \vphi_{\mathrm{ab}}) = \vphi_Z \circ \omega_G$.  This finishes the proof of the theorem.
\end{proof}

\noindent We are now ready to prove the main result of this article.
\begin{proofseparate}[Proof of Theorem \ref{introthm:superrigid}]
  Let $G$ be a torsion-free finitely generated 2-step nilpotent group and $H$ some group such that $\Cstarred(G) \cong \Cstarred(H)$.  Theorem \ref{introthm:nilpotent} says that $H$ is also a torsion-free finitely generated nilpotent group.  Denote by $(A, B, \omega)$ the triple constructed from $\Cstar(G)$ by means of Theorem \ref{thm:recovering-triple}.  Then $(A, B, \omega)$ is equivalent to $(\cZ(G), G/\cZ(G), \omega_G)$ and to $(\cZ(H), H/cZ(H), \omega_H)$ by Theorem \ref{thm:recovering-triple} and the isomorphism $\Cstar(G) \cong \Cstar(H)$.  So Corollary \ref{cor:extension-data-skew-symmetric} implies that $G \cong H$, finishing the proof of the theorem.
\end{proofseparate}




\vspace{2em}
{\small \parbox[t]{200pt}
  {
    Sven Raum \\
    Department of Mathematics \\
    Stockholm University \\
    SE-106 91 Stockholm \\
    Sweden \\
    {\footnotesize raum@math.su.se}
  }
  \hspace{15pt}
  \parbox[t]{200pt}
  {
    Caleb Eckhardt \\
    Department of Mathematics, Miami University \\
    Oxford, OH 45056, USA \\
    {\footnotesize eckharc@miamioh.edu}
  }
}

\end{document}